\documentclass[10pt, a4paper]{article}
\usepackage[utf8]{inputenc}
\usepackage[left=2cm,right=2cm,top=1.5cm,bottom=1.5cm, a4paper]{geometry}
\usepackage{url}

\usepackage{amsmath,amssymb,fixmath,mathtools}
\usepackage{amsthm}            
\usepackage{graphicx}
\usepackage{bm}
\usepackage{comment}
\usepackage{soul}
\usepackage{xcolor}
\usepackage{tcolorbox}
\usepackage{parcolumns}
\usepackage{tabularx}
\usepackage[english]{babel}
\usepackage{lineno}  
\usepackage{tikz}
\usetikzlibrary{arrows.meta,positioning}

\def\E{\mathbb{E}}
\def\P{\mathbb{P}}
\def\R{\mathbb{R}}

\newcommand{\cA}{{\mathfrak A}}
\newcommand{\cS}{{\mathfrak S}}
\newcommand{\cX}{{\mathfrak X}}

\newcommand{\cG}{{\mathfrak G}}
\newcommand{\cM}{{\mathfrak M}}

\newcommand{\cF}{{\mathfrak F}}
\newcommand{\cK}{{\mathfrak K}}

\newcommand{\norm}[1]{\left\lVert#1\right\rVert}

\newtheorem{definition}{Definition}
\newtheorem{theorem}[definition]{Theorem}

\newtheorem{proposition}[definition]{Proposition}

\theoremstyle{definition}
\newtheorem{example}{Example} 

\theoremstyle{remark}
\newtheorem{remark}{Remark}

\newcommand{\cL}{{\cal L}}

\newcommand{\bgeqn}{\begin{eqnarray}}
\newcommand{\edeqn}{\end{eqnarray}}
\newcommand{\bgeq}{\begin{eqnarray*}}
\newcommand{\edeq}{\end{eqnarray*}}
\newcommand{\bec}{\begin{center}}
\newcommand{\enc}{\end{center}}

\newcommand{\var}{{\rm VaR}} 

\newcommand{\cE}{{\cal E}}

\newcommand{\half}{ \mbox{\small$\frac{1}{2}$}}

\def\dom{{\rm dom}}

\def\cl{{\rm cl}}

\def\ess {{\rm ess\, sup}}
\def\esi {{\rm ess\, inf}}

\def\cvar{{\sf CVaR}}

\newcommand{\bbr}{{\Bbb{R}}}
\newcommand{\bbp}{{\Bbb{P}}}
\newcommand{\bbe}{{\Bbb{E}}}

\makeatletter
\newsavebox{\@brx}
\newcommand{\llangle}[1][]{\savebox{\@brx}{\(\m@th{#1\langle}\)}%
  \mathopen{\copy\@brx\kern-0.5\wd\@brx\usebox{\@brx}}}
\newcommand{\rrangle}[1][]{\savebox{\@brx}{\(\m@th{#1\rangle}\)}%
  \mathclose{\copy\@brx\kern-0.5\wd\@brx\usebox{\@brx}}}
\makeatother

\def\plus{{\scriptscriptstyle +}} \def\minus{{\scriptscriptstyle -}}
\def\text#1{\;\,\hbox{#1}\;\,}    
\def\lset{\big\{\,}    \def\mset{\,\big|\,}   \def\rset{\,\big\}}
\def\Lset{\Big\{\,}    \def\Mset{\,\Big|\,}   \def\Rset{\,\Big\}}
\outer\def\proclaim #1. #2
   \par{\medbreak\noindent{\bf#1.\enspace}{\sl#2}\par
  \ifdim\lastskip<\medskipamount \removelastskip\penalty55\medskip\fi}

\def\paritem#1{\vskip0cm\noindent\hskip12pt{{\rm #1}}\hskip5pt}
\def\eop{\hfill{$\vcenter{\hrule height1pt \hbox{\vrule width1pt height5pt
   \kern5pt \vrule width1pt} \hrule height1pt}$} \medskip}
\def\low#1{{\lower1pt \hbox{$\scriptstyle #1$}}}
\def\high#1{{\raise1pt \hbox{$\scriptstyle #1$}}}
 
\def\implies{\quad\hbox{$\Longrightarrow$}\quad} 
\def\iff{\quad\hbox{$\Longleftrightarrow$}\quad} 

\def\dom{\mathop{\rm dom}\nolimits} 
\def\argmin{\mathop{\rm argmin}}   
\def\sumn{\sum\nolimits}      
 
\def\half{{{}\raise 1pt \hbox{$\frac{\scriptstyle 1}{\scriptstyle 2}$}}}
\def\eqalign#1{\begin{array}{lcr} #1 \end{array}}
\def\reals{{I\kern-.35em R}} \def\mdot{{\kern-.02em\cdot\kern-.04em}}

 \def\newpage{\vfill\eject}

\def\cA{{\cal A}}  \def\cC{{\cal C}} \def\cD{{\cal D}} 
\def\cE{{\cal E}} \def\cF{{\cal F}} \def\cG{{\cal G}}  
\def\cJ{{\cal J}} \def\cK{{\cal K}} \def\cL{{\cal L}} \def\cM{{\cal M}} 
\def\cP{{\cal P}} \def\cQ{{\cal Q}} \def\cR{{\cal R}} \def\cS{{\cal S}} 
\def\cV{{\cal V}}  \def\cX{{\cal X}} 

\def\cvar{{\rm CVaR}}

\def\<x>{\langle\!\langle\mathbf{x}\rangle\!\rangle}
\def\l<{\langle\!\langle}
\def\r>{\rangle\!\rangle}
\usepackage{hyperref} 
\hypersetup{
    colorlinks=true,
    linkcolor=blue,
    filecolor=magenta,      
    urlcolor=cyan,
    pdftitle={Overleaf Example},
    pdfpagemode=FullScreen,
    }

\title{The Risk Quadrangle in Optimization: An Overview with Recent Results and Extensions}
\author{Bogdan Grechuk\thanks{School of Computing and Mathematic Sciences, University of Leicester, Leicester LE1 7RH, UK;\\ email: \url{bg83@leicester.ac.uk}} \and Anton Malandii\thanks{Department of Applied Mathematics and Statistics, State University of New York, Stony Brook, NY 11794, USA;\\ emails: \url{anton.malandii@stonybrook.edu}, \url{stanislav.uryasev@stonybrook.edu}} \and Terry Rockafellar\thanks{University of Washington, Department of Applied Mathematics, 408 L Guggenheim Hall, PS Box 352420, Seattle, WA 98195, USA;\\ email: \url{rtr@math.washington.edu}}
\and
Stan Uryasev\footnotemark[2]
}

\begin{document}
\maketitle
\begin{abstract}
This paper revisits and extends the 2013 development by Rockafellar and Uryasev of the Risk Quadrangle (RQ) as a unified scheme for integrating risk management,
optimization, and statistical estimation. The RQ features four stochastics-oriented functionals --- risk, deviation, regret, and error, along with an 
associated statistic, and articulates their revealing and in some ways
surprising interrelationships and dualizations. Additions to the RQ framework that have come to light since 2013 are reviewed in a synthesis focused
on both theoretical advancements and practical applications. New quadrangles
--- superquantile, superquantile norm, expectile, biased mean, quantile 
symmetric average union, and $\varphi$-divergence-based quadrangles --- offer 
novel approaches to risk-sensitive decision-making across various fields such as machine learning, statistics, finance, and PDE-constrained optimization. The theoretical contribution comes in axioms for ``subregularity'' relaxing ``regularity'' of the quadrangle functionals, which is too restrictive for some applications. The main 
RQ theorems and connections are revisited and rigorously extended to this more ample framework. Examples are provided in portfolio optimization, 
regression, and classification, demonstrating the advantages and the 
role played by duality, especially in ties to robust optimization and
generalized stochastic divergences.
\end{abstract}


\section{Introduction}
This paper extends the work of Rockafellar and Uryasev \cite{MR3103448} and reviews the new results published since 2013. Although the material in the paper is self-contained and can be read independently, readers are strongly encouraged to familiarize themselves with the foundational content of \cite{MR3103448} for a still broader perspective. 

\paragraph{Background.} The Risk Quadrangle (RQ) is a unified framework that integrates risk management, optimization, and statistical estimation by grouping four axiomatically defined stochastics-oriented functionals: risk, deviation, regret, and error connected by a so-called statistic.  For a random variable 
$X$, \emph{risk} provides a numerical surrogate for the overall hazard in 
$X$, \emph{deviation} measures the ''nonconstancy`` in $X$, \emph{regret} 
assesses the distaste in facing the mix of good-and-bad outcomes of $X$, 
and \emph{error} quantifies the ''nonzeroness`` in $X$, not necessarily with symmetry between positive and negative outcomes.  The \emph{statistic} 
identifies the constant closest to $X$ with respect to a particular form
of error functional, but also gives the level of trade-off between future loss and immediate acceptance that is appropriate for a particular form of
regret functional.  The fruitful interplay between these functionals supports a vast 
spectrum of applications across all engineering areas where probabilistically modeled uncertainty is involved.  Among them, to name a few, are quantitative finance, reliability engineering, mechanical engineering, medical imaging, and machine learning.

\begin{tcolorbox}[title = \small The Risk Quadrangle, halign title = center,colback=white,  colbacktitle=white, coltitle=black, fonttitle=\bfseries]
\vspace{-0.2cm}
\small
     \begin{center}
$$\eqalign{
\hskip91pt \text{risk} \cR \hskip15pt\,\longleftrightarrow \,\hskip15pt\cD \text{deviation} &\cr
\hskip10pt optimization \hskip48pt 
\uparrow\downarrow\hskip05pt\underset{\text{statistic}}{\cS}\hskip04pt\downarrow\uparrow 
    \hskip60pt estimation \hskip28pt &\cr
\hskip83pt \text{regret} \cV\hskip15pt\,\longleftrightarrow \,\hskip15pt\cE \text{error} &\cr
}$$
\end{center}
\end{tcolorbox}
\begin{tcolorbox}[title = \small General Relationships, halign title = center,colback=white,  colbacktitle=white, coltitle=black, fonttitle=\bfseries]

\small
   $$\cR(X) = \cD(X) + \bbe[X] \qquad \cV(X) = \cE(X) + \bbe[X]$$
   $$\hspace{0.9cm}\cD(X)= \min\limits_C\!\lset\cE(X-C)\rset \qquad
    \cR(X)= \min\limits_C\!\lset C+\cV(X-C)\rset$$
      $$\hspace{1.1cm}\argmin_C\!\lset\cE(X-C)\rset=
           \cS(X)=\argmin_C\!\lset C+\cV(X-C)\rset$$
\end{tcolorbox}
The relationship between risk and regret allows: 1) constructing new risk measures and quadrangles; 2) building efficient algorithms for risk optimization (see e.g., \cite{CVaR, CVaR2, ben-tal_old-new_2007, Expect, Anton2024expectile}, and more). The relation between deviation and error links statistical estimation and risk management, resulting in concepts such as risk-tuned regression and risk tracking (see \cite{MR2442649, MR3357644}). This relation shows equivalences among various types of regressions and provides robust and efficient estimation techniques (see \cite{CVaRRegr, CVaRRegGolod, Anton2024expectile, Anton2024biased}). For more information on risk-adaptive approaches, see \cite{MR3103448, Royset2022}.

\paragraph{Motivation.} The motivation of this paper is twofold: on the \emph{practical side}, it aims to offer engineers and practitioners robust, implementable methods and models for regression and risk estimation under real-world constraints; on the \emph{theoretical side}, it seeks to advance foundational principles in stochastic optimization, distributionally robust optimization, convex analysis, and statistical estimation, targeting pure mathematicians who explore deep relationships within these fields.

We highlight here that \cite{Royset2022} provides an excellent, wide-ranging survey of risk-adaptive approaches to stochastic optimization. However, our contribution is complementary and distinct: this article is \emph{not} another overview of quadrangle ideas but a \emph{rigorous mathematical development} of the framework. We focus on the RQ as the organizing framework and deliver (i) \emph{axiomatic extensions} that sharpen and generalize the framework, (ii) new \emph{duality results} formulated via \emph{generalized stochastic divergences} tailored to quadrangle elements, and (iii) a set of \emph{recent applications} from the literature that operationalize the framework in modern modeling and algorithms. This focus consolidates and advances the theory while clarifying modeling templates and algorithmic consequences.

\medskip

\textbf{\textit{Practical aspects.}}  The original work in 
\cite{MR3103448} established the foundation of the RQ, formulating key 
theorems and relationships that unify its elements within a single 
mathematical structure.  The innovations were largely theoretical, without 
extensive demonstrations of practical applications or empirical benefits, but in the last decade, a series of other works has begun to fill the gap.   The RQ scheme has been leveraged to answer application-driven 
questions with practical results in risk management, statistical estimation, machine learning, fairness-aware machine learning, and beyond.  This has encompassed new additions such as the superquantile quadrangle \cite{CVaRRegr}, 
the superquantile norm quadrangle \cite{CVaRNorm2}, 
the expectile quadrangles \cite{Anton2024expectile}, 
the biased mean quadrangle \cite{Anton2024biased}, and the quantile symmetric average union quadrangle \cite{Anton2022SVR}.

In particular, these recent contributions illustrate how the RQ provides a powerful framework for developing new regression estimators. For example, the expectile and least squares regressions (known to be quadratic programs) have been reduced to linear programs, thereby enabling fast algorithms that also accommodate mixed-integer constraints --- an essential feature in many real-world applications. The biased mean quadrangle introduced a novel estimator for the so-called biased mean, broadening the scope of classical methods in robust statistics. Likewise, the superquantile norm quadrangle has uncovered fundamental connections between support vector regression (SVR) and distributionally robust optimization, while the quantile symmetric average union quadrangle has shown that SVR estimates the average of two symmetric quantiles, thus clarifying its theoretical and practical strengths and limitations.

Another active area of research, fairness-aware machine learning, has also significantly benefited from the development of the axiomatic theory of risk and deviation measures. In particular, \cite{williamson2019fairness} defined fairness as a deviation in subgroup risks, which enabled the reduction of fairness-aware classification to a convex optimization problem. This problem can be efficiently solved by leveraging the relationship between risk and regret.

Finally, epi-regularization of risk measures \cite{kouri2020epi} has emerged as a unifying thread in stochastic optimization for turning nonsmooth risk functionals into smooth and algorithm-friendly surrogates while preserving their decision‐relevant structure. By smoothing coherent and/or regular risk measures (e.g., conditional value-at-risk (CVaR), expectile) via infimal convolution, one obtains differentiable approximations that epi-converge to the original problems, retain level-set geometry, and enable first- and second-order methods, proximal/Bregman schemes, and stochastic algorithms with clear convergence guarantees \cite{cvaropt}. On the dual side, epi-regularization corresponds to controlled relaxation of risk envelopes, yielding stable sample-average estimators. These developments matter in engineering --- where reliability, safety, and performance are co-optimized in power systems, structural and aerospace design, model-predictive control, and autonomous systems --- and in mathematics, where they connect variational convergence, monotone operator theory, and generalized stochastic divergences, motivating new duality results and principled algorithm design.

Despite this progress, the emerging body of work remains scattered across multiple publications, each focusing on a specific quadrangle. Researchers and practitioners seeking to exploit these new developments lack a single, cohesive resource detailing how the RQ can be specialized, extended, and integrated into broad classes of optimization and estimation problems. 

This paper synthesizes recent advancements in the RQ framework, highlighting and extending the developments and results mentioned above, as well as the benefits they offer for risk-sensitive decision-making. The following Map of Applications illustrates several representative examples from machine learning, statistics, quantitative finance, and risk-averse PDE-constrained optimization (see, for example, \cite{Antil2018}), showcasing the breadth of the framework. The following Portfolio Optimization example for risk and deviation is based on general relationships between quadrangle elements, as described in Theorem \ref{Quadrangle Theorem}. The PDE-constrained Expectile Minimization example is based on the Expectile Quadrangle considered in Example \ref{ExpectileExample}. The Linear Regression example is based on the Regression Theorem \ref{Regression Theorem}. The Support Vector Classification example originates from the work \cite{NuSVMasCVaR}.
\begin{tcolorbox}[title = \small Map of Applications, halign title = center,colback=white,  colbacktitle=white, coltitle=black, fonttitle=\bfseries]
\begin{parcolumns}[colwidths={ 1=0.48\textwidth, 2=0.48\textwidth}]{2}
\scriptsize
        \colchunk{
\!\!        \textbf{``Unsupervised learning''}
      
    
    \emph{Given:} \  $\mathbf{X} = (X_1,\ldots, X_d)=$ features
    

    \emph{Model:} \  $\ell(\mathbf{w};\mathbf{X})=$ loss, $\mathbf{w} \in \mathcal{W}$


    \emph{Problem:} $\min\limits_{\mathbf{w}} \ \cR(\ell(\mathbf{w};\mathbf{X}))$ (or $ \cD(\ell(\mathbf{w};\mathbf{X}))$)

\emph{Notation:} \ $X_\pm=\max\{\pm X,0\}$

    }
     \colchunk{
     \textbf{\!\!``Supervised learning''}
  

    
    $\mathbf{X}= (X_1,\ldots, X_d)=$ features, $Y=$ target


    $\ell(\mathbf{w}; \mathbf{X},Y)=$ loss, $\mathbf{w} \in \mathcal{W}$


    $\min\limits_{\mathbf{w}} \ \cR(\ell(\mathbf{w};\mathbf{X},Y))$ (or $ \cE(\ell(\mathbf{w};\mathbf{X},Y))$)
    }
    \end{parcolumns}
\vspace{0.1 cm}
    \begin{center}
        \underline{\small \textbf{Examples}}
    \end{center}
    \begin{parcolumns}[colwidths={ 1=0.49\textwidth, 2=0.49\textwidth}]{2}
    \vspace{-0.3cm}
\scriptsize
     \colchunk{
\begin{tcolorbox}[title = Portfolio Optimization, halign title = center,colback=white,  colbacktitle=white, coltitle=black, fonttitle=\bfseries, height=4cm]
 \vspace{-0.2 cm}
$$\ell_\mathbf{w}:=\ell(\mathbf{w};\mathbf{X}) = -\mathbf{w}^\top\mathbf{X}$$
$$\hspace{-0.1cm}\mathcal{W} = \{ \mathbf{w} \in \bbr^d: \mathbf{w}^\top\textbf{1}  = 1, \ \bbe[-\ell_\mathbf{w}] = \mu\}$$
$$\!\!\!\!\min_{\mathbf{w} \in \mathcal{W}} \ \cR(\ell_\mathbf{w}) = \min_{\mathbf{w} \in \mathcal{W}, C} \{ C + \cV(\ell_\mathbf{w}-C)\}
$$
$$
\!\!\!\!\!\text{or} \!\min_{\mathbf{w} \in \mathcal{W}} \ \cD(\ell_\mathbf{w}) =\!\min_{\mathbf{w} \in \mathcal{W}, C} \cE(\ell_\mathbf{w}-C)\! =
$$
$$
= \min_{\mathbf{w} \in \mathcal{W}} \{ \cR(\ell_\mathbf{w})\!-\!\bbe[\ell_\mathbf{w}] \}
$$

\end{tcolorbox}

\begin{tcolorbox}[title =   PDE-constrained Optimization, halign title = center,colback=white,  colbacktitle=white, coltitle=black, fonttitle=\bfseries, height=5cm]
 $$\ell_\mathbf{w}:=\ell(\mathbf{w};\mathbf{X}) = L(u(\mathbf{w}; \mathbf{X}), \mathbf{X})$$ $$\mathcal{B} = \textrm{Banach space} $$
 $$\mathcal{W} = \{\mathbf{w} \in \mathcal{B}: \!\!\!\! \underset{\text{PDE in weak form}}{\underbrace{f(u(\mathbf{w}; \mathbf{X}), \mathbf{w}, \mathbf{X})}}
 \!\!\!\! = 0 \}$$
 $$\min_{\mathbf{w} \in \mathcal{W}} e_{\!K}(\ell_\mathbf{w})=
 $$
 \vspace{-0.3 cm}
 $$\!\!\!\!\!\!=\!\!\min_{\mathbf{w} \in \mathcal{W},C} 
 C+\{\bbe[\ell_\mathbf{w}-C + \!\frac{1}{K}\!(\ell_\mathbf{w}-C)_+]\}_{\!+}
 $$

\end{tcolorbox}
}

     \colchunk{
\begin{tcolorbox}[title = Linear Regression, halign title = center,colback=white,  colbacktitle=white, coltitle=black, fonttitle=\bfseries, height=4cm]
$$\ell(\mathbf{w}; \mathbf{X},Y) = Y -  \bar{\mathbf{w}}^\top\mathbf{X} - w_0$$
$$ \mathcal{W} = \bbr^{d+1}, \quad \mathbf{w} = (\bar{\mathbf{w}}, w_0)$$
$$ \min_{\mathbf{w} \in \mathcal{W}} \ \cE(\ell(\mathbf{w}; \mathbf{X},Y))=$$
$$=\min_{\mathbf{w} \in \mathcal{W}} \ \cD(\ell(\mathbf{w}; \mathbf{X},Y))$$
$$ \qquad \text{s.t.} 0 \in \cS(\ell(\mathbf{w}; \mathbf{X},Y))
$$
\end{tcolorbox}

\begin{tcolorbox}[title = Support Vector\\ Classification, halign title = center,colback=white,  colbacktitle=white, coltitle=black, fonttitle=\bfseries, height=5cm]

$$\hspace{-0.3cm}\ell(\mathbf{w}; \mathbf{X},Y) = -Y(\bar{\mathbf{w}}^\top\mathbf{X}+ w_0), \ Y = \{\pm 1\} $$
$$ \mathcal{W} = \{\mathbf{w} \in \bbr^{d+1}: \norm{\bar{\mathbf{w}}}_2 \leq 1, \ w_0 \in \bbr \}$$
$$\min_{\mathbf{w} \in \mathcal{W}} \cvar_\alpha(\ell(\mathbf{w}; \mathbf{X},Y)) =
$$
\vspace{-0.25 cm}
$$\hspace{-0.4cm} = \hspace{-0.2cm}\min_{\mathbf{w} \in \mathcal{W}, C} \left\{ C + \frac{1}{1\!-\!\alpha}\; \bbe[\ell(\mathbf{w};\mathbf{X},Y)-C]_+\right\}
$$
\end{tcolorbox}

     }
    \end{parcolumns}
    
    \end{tcolorbox}

\medskip
\textbf{\textit{Theoretical aspects.}}  Practical challenges and advances 
have in turn stimulated the theoretical development of the framework ---
especially its axiomatic foundations.  The basic theorems and relationships 
have accordingly needed updating beyond their original forms in 
\cite{MR3103448}.  Interest in generalized stochastic divergences in 
connection with distributional robustness, for example, has led to the 
discovery that such divergence functionals are actually dual to risk functionals, but of a sort not covered by the original scheme.  That has pointed to the need for some relaxation in the original axioms.

The stochastics-oriented functionals that comprise the quadrangle are axiomatically defined (see Axioms for Deployment in Various Combinations), with their axioms deeply rooted in convex 
analysis and classical statistics.  The axiomatic nature of the framework elevates the optimization problem settings beyond the expectation-type objectives that have long been common in statistics and, more recently, in machine learning.  The significance of objectives other than expected loss in machine learning became evident with the development of $\nu$--Support Vector Machines in \cite{NewSVM}.

The rapid development and global influence of machine learning have revealed the need for axiomatic revisions in the RQ setting.  Specifically, several 
useful error quantifiers such as the Vapnik error (or expected 
$\varepsilon$--insensitive loss), $\cE(X) = \bbe[|X|-\varepsilon]_+, 
\ \varepsilon \geq 0$ and the superexpectation error, 
$\cE(X) = \big\{\bbe[X_-] - x_+, \bbe[X_+] - x_-\big\}, \ x \in \bbr$, do not 
satisfy the strict positivity axiom (A9) (see Axioms for Deployment in Various Combinations), which 
stimulates an appropriate relaxation of this particular axiom, followed by 
axioms (A6)--(A8). The strict positivity axiom states that an error of a 
random variable $X$ should always be positive whenever $X$ is nonzero, 
which holds for most classical errors, such as mean squared error, mean 
absolute error, etc.  However, in practice, there may exist so-called 
``error-insensitive'' regions, where errors below a certain threshold are 
ignored.  The most famous example is in $\varepsilon$--Support Vector 
Regression (cf. \cite{SVR, SVR2}), where the Vapnik error is minimized. 
To address the aforementioned issues, this paper introduces 
\emph{subregularity axioms}, where the concept of subaversity is used to relax the (A6)--(A9).   The proofs of the fundamental theorems have to be adjusted to incorporate this extension.

\begin{tcolorbox}[title = \small Axioms for Deployment in Various Combinations, halign title = center,colback=white,  colbacktitle=white, coltitle=black, fonttitle=\bfseries]
For a functional $\cF: \cL^p \to (-\infty, \infty]$, a subset of the following properties may be of interest (cf. Subsection~\ref{subsec: axioms and defs} for formal definitions):
\begin{itemize}
    \item[(A1)] \emph{Convexity:} \quad 
   ${\cal F}(\lambda X + (1-\lambda) Y) \leq 
   \lambda {\cal F}(X) + (1-\lambda){\cal F}(Y)$ for all 
     $X,Y \in {\cal L}^p$ and every $\lambda \in [0,1]$.

    \item[(A2)] \emph{Lower semicontinuity:} \quad 
    $\{\cF \leq C\}:=\{X \in \cL^p: \cF(X) \leq C\}$ is closed for all 
                $C \in \bbr.$ 
    \item[(A3)] \emph{Constant fidelity:} \quad 
          $\cF(X) = C \in \bbr$ if $\bbp(X = C) = 1.$

    \item[(A4)] \emph{Constant neutrality:} \quad 
                $\cF(X) = 0 \in \bbr$ if $\bbp(X = C) = 1.$

    \item[(A5)] \emph{Zero fidelity:} \quad 
              $\cF(X) = 0 \in \bbr$ if $\bbp(X = 0) = 1.$

    \item[(A6)] \emph{Aversity:} \quad 
            $\cF(X) > \bbe[X]$ for all $X \neq C.$
     
    \item[(A7)] \emph{Zero aversity:} \quad 
                 $\cF(X) > \bbe[X]$ for all $X \neq 0.$

    \item[(A8)] \emph{Nonconstant positivity:} \quad 
         $\cF(X) > 0$ for all $X \neq C.$
    
    \item[(A9)] \emph{Strict positivity:} \quad $\cF(X) > 0$ for all $X \neq 0.$
    
    \item[(A10)] \emph{Monotonicity:} \quad 
            $\cF(X) \leq \cF(Y)$ when ${\mathbb P}(X \leq Y)=1$.

    \item[(A11)] \emph{Positive homogeneity:} \quad 
           $\cF(\lambda X)=\lambda\cF(X)$ when $\lambda>0$.

\end{itemize}

\end{tcolorbox}
\begin{tcolorbox}[title = \small Regular Quadrangle Axioms, halign title = center,colback=white,  colbacktitle=white, coltitle=black, fonttitle=\bfseries]
\begin{itemize}
    \item[($\cR$)] \emph{Regular risk}: (A1), (A2), (A3), (A6).
    \item[($\cV$)] \emph{Regular regret}: (A1), (A2), (A5), (A7).
    \item[($\cD$)] \emph{Regular deviation}: (A1), (A2), (A4), (A8).
     \item[($\cE$)] \emph{Regular error}: (A1), (A2), (A5), (A9).
\end{itemize}
\end{tcolorbox}
\medskip

Originally in \cite{MR3103448}, the definitions of regular error and regret 
also included limit conditions.  For $\cE$, this required that 
$\lim_{k\to \infty}{\mathbb E}[X_k]=0$ for any sequence of random variables (r.v.s) 
$\{X_k\}_{k=1}^\infty$ satisfying $\lim_{k\to \infty}\cE[X_k]=0$, and there
was an analogous condition for $\cV$.  Those conditions were needed back then 
for the proofs of particular theorems, but in the meantime they have been found 
to be superfluous --- other proofs succeed without invoking them.  
Presenting the RQ framework in that simpler form is one of the contributions of this paper.

For the orientation of readers who may be familiar with risk measures $\cR$ 
that are \emph{coherent}, but perhaps new to risk measures that are 
\emph{regular}, an explanation of the relationship may be helpful.  As 
introduced in the pioneering days of risk theory, a coherent measure of 
risk had to satisfy axioms that, although stated differently, were
equivalent to the combination of (A1), (A2), (A3), (A10), and (A11) for
$\cR$ in place of $\cF$ (although (A2) did not appear from the start, 
because the original context was such that finiteness and convexity of the functionals 
entailed their continuity). Later, there was an incentive on many fronts to 
drop (A11) from this list, and then $\cR$ was called, by some, simply
a convex measure of risk.  The trouble is that there are very prominent
instances of risk measure $\cR$, such as the one in the Standard Mean-Based Quadrangle (see Example \ref{ex:standard}), that are convex but lack the monotonicity
in (A10).   That monotonicity, however, is a watershed requirement for
dualization that reflects in terms of alternative probability distributions 
compared to the nominal one.  Rockafellar suggested it would make better
sense to tie coherency to that monotonicity and adjust the terminology by
speaking of risk measures satisfying (A1), (A2), (A3), and (A10) as
coherent in the general sense.   But if monotonicity is so important, why
is (A10) absent from the definition of a regular risk measure $\cR$ 
in \cite{MR3103448}?  That is because the Standard Quadrangle, with its
long-running usage and deep connection to classical statistics, had to be
admitted to the RQ picture despite its lack of (A10), in particular for 
bringing out that deficiency.
\medskip

\textbf{\textit{Incentives coming from stochastic divergences.}}  
The stochastic modeling of uncertainty entails an underlying probability 
space $(\Omega, \cM, \bbp_0),$ which is used to define the random losses 
(or costs), $\ell(\mathbf{w}, \omega), \ \omega \in \Omega,$ as real-valued 
random variables.  Specifically, the probability of a random loss being 
less than a certain numerical threshold is measured by the cumulative 
distribution function defined by the underlying probability measure, i.e., 
$F(x) = \bbp_0(\{\omega \in \Omega: \ell(\mathbf{w},\omega) \leq x\}).$ 
When the goal is to find a decision $\mathbf{w}$ that ``minimizes'' the 
loss function, the stochastic nature of this function raises the question: 
``minimizes in which sense?''  The answer to this question significantly 
relies on the information regarding the underlying probability $\bbp_0,$ 
which is usually either unknown or only partially available.

When the natural designation of $\bbp_0$ is at hand, the classical approach 
suggests minimizing the average loss, i.e., 
$$\min_{\mathbf{w}} \ \bbe_{\bbp_0}[\ell(\mathbf{w},\omega)].$$ 
The average loss minimization is often referred to as \emph{risk-neutral} 
optimization.  On the other hand, if there is a lack of information 
regarding the underlying probability, the \emph{robust} approach suggests 
minimizing the loss associated with the worst outcome $\omega \in \Omega,$ 
i.e.,
 $$ \min_{\mathbf{w}}\max_{\omega \in \Omega} \ \ell(\mathbf{w},\omega).$$ 
While this approach has its merits and successes, it can also be overly 
conservative and, consequently, costly.

The \emph{distributionally robust} approach offers a sort of compromise 
between neutrality and conservativeness.  Rather than relying solely on a 
single distribution $\bbp_0$ or avoiding probabilities entirely, we can 
consider sets of alternative distributions $\bbp \in \mathfrak{P}$ and 
minimize the expected value with respect to the worst one, i.e., 
 $$\min_{\mathbf{w}}\max_{\bbp \in \mathfrak{P}} 
            \ \bbe_{\bbp}[\ell(\mathbf{w},\omega)].$$
Consequently, it raises the question of how to construct the set of 
probability alternatives $\mathfrak{P}$.   It turns out that the answer is 
intimately connected to the duality theory for coherent measures of risk 
$\cR$ in the basic sense, satisfying (A1), (A2), (A3), (A10) and (A11),
and the concept of distributional robustness in that way is an integral part 
of the RQ picture.  A key there to constructing sets $\mathfrak{P}$ consisting of probability alternatives is to take those sets to be ``neighborhoods''
of nominal probability measure $\bbp_0$ with respect to some sort of
``distance'' concept.  Such distances are furnished by general \emph{stochastic divergence} functionals.  That much has been understood by practitioners in machine learning and elsewhere, but here there will be something more.   Stochastic divergence functionals will be given an axiomatic definition and seen as duals to special risk measures that are not quite regular, but fit with a relaxation of regularity in addition to being coherent in the general sense.  
  
\paragraph{Outline.} The rest of the paper is organized as follows. 
Section \ref{sec:examples} provides a list of examples of risk quadrangles 
highlighting the framework's scope and motivating its further extensions. Section \ref{sec:extended} reviews and extends the main properties and 
relationships of the subregular RQ. Specifically, subregularity axioms and 
definitions are given in Subsection \ref{subsec: axioms and defs}. 
The discussion on the fundamental theorems --- cornerstones of the framework --- is a subject of 
Subsection \ref{subsec:deviation}.  The primal aspects of quadrangle construction are covered in Subsection \ref{sub: primal representation}. Dual representation and conjugate duality 
of the subregular risk, deviation, regret, and error are discussed in 
Subsection \ref{sec:dual}, and the concept of parent functionals and corresponding positive homogeneous families is presented in Subsection \ref{sec:gen families}. Section \ref{sec: Regression} is devoted to discussing generalized regression and statistical estimation in the subregular RQ framework. Finally, Section \ref{sec: SO and DR} provides an overview of the concepts of robust and distributionally robust optimization together with new results on epi-regularization within the subregular RQ framework.

\section{Examples of Risk Quadrangles}\label{sec:examples}

Following \cite{MR3103448}, we start with the examples highlighting the framework's scope and motivating its further extension. We begin with a few main examples from \cite{MR3103448}, and then continue with new examples.

\begin{example}[Standard Mean-Based Quadrangle, $\lambda > 0$]\label{ex:standard} 
This is Example 1 in \cite{MR3103448}, where it is called the Mean-based 
Quadrangle. 

\begin{tcolorbox}[title=1. Standard Mean-Based Quadrangle, halign title=center, colback=white,  colbacktitle=white, coltitle=black, fonttitle=\bfseries, height=4cm]
\vspace{-0.5cm}
	\begin{align*}
		\cS(X) & = \bbe[X] = \!\!\text{mean} \\
		\cR(X) & = \bbe[X] + \lambda \sigma(X) = \!\!\text{safety margin tail risk}\\
		\cD(X) & = \lambda \sigma(X) = \!\!\text{standard deviation, scaled}\\
		\cV(X) &= \bbe[X] + \lambda \|X\|_2 = \!\!\text{$\cL^2$-regret, scaled}\\
		\cE(X) &= \lambda \|X\|_2 = \!\!\text{$\cL^2$-error, scaled}
	\end{align*}
\end{tcolorbox}
\end{example}


\begin{example}[Quantile-Based Quadrangle, $\alpha \in (0,1)$] 
This is Example 2 in \cite{MR3103448}. Recall that for any r.v. $X$ its 
cumulative distribution function is $F_X(x)=\bbp (X\leq x)$, and its 
$\alpha$-quantile is $q_\alpha(X) = [q^-_\alpha(X), q^+_\alpha(X)]$, where 
$q^-_\alpha(X)=\sup\{x\,|\,F_X(x)<\alpha\}$ and 
$q^+_\alpha(X)=\inf\{x\,|\,F_X(x)>\alpha\}$. 
Recall also notation $X_+=\max\{X,0\}$ and $X_-=\max\{-X,0\}$. 
	
	\begin{tcolorbox}[title= 2. Quantile-Based Quadrangle, halign title=center, colback=white,  colbacktitle=white, coltitle=black, fonttitle=\bfseries]
		$$
\eqalign{
  \hskip40pt
  \cS(X)=\var_\alpha(X)=q_\alpha(X)=\!\!\text{quantile}&\cr
  \hskip40pt
  \cR(X)=\cvar_\alpha(X)=\frac{1}{1-\alpha}\!\int_{\alpha}^{1}\var_\beta(X)\,d\beta
         =\!\!\text{CVaR}&\cr
  \hskip40pt
  \cD(X)=\cvar_\alpha(X-\bbe[X])
         =\!\!\text{CVaR-deviation}&\cr
  \hskip40pt
  \cV(X)=\frac{1}{1-\alpha}\,\bbe[X_+]
         =\!\!\text{partial moment, scaled}&\cr
  \hskip40pt
  \cE(X)=\bbe\!\left[\frac{\alpha}{1-\alpha}X_+ + X_-\right]
         =\!\!\text{normalized Koenker–Bassett error}&\cr
}
$$

	\end{tcolorbox}
\end{example}

We refer to \cite{MR3103448} for many more examples. We next present quadrangles not listed in \cite{MR3103448}.

\begin{example}[CVaR-Based Quadrangle, $\alpha \in (0,1)$] Estimating conditional value-at-risk (CVaR, also called superquantile) via regression has long been challenged by CVaR’s known non-elicitability property \cite{Chun2012, Gneiting2011}, which often forces indirect, quantile regression-based approximations. Overcoming this obstacle is crucial for developing more direct and reliable algorithms for superquantile estimation.

A breakthrough came with the CVaR Quadrangle developed in \cite{rockafellar2014random, CVaRRegr}, which underpins new methodologies for CVaR estimation. In particular, CVaR regression functions emerge naturally as the optimal solutions to the CVaR2 error minimization problem. This fundamental result circumvents the non-elicitability issue without relying on indirect approaches.

\emph{The CVaR-based quadrangle is regular.}

\begin{tcolorbox}[title= {3. CVaR-Based Quadrangle}, halign title=center, colback=white,  colbacktitle=white, coltitle=black, fonttitle=\bfseries]
$$
\eqalign{
  \hskip40pt
  \cS(X)=\cvar_\alpha(X)=\!\!\text{CVaR}&\cr
  \hskip40pt
  \cR(X)=\frac{1}{1-\alpha}\!\int_{\alpha}^{1}\cvar_\beta(X)\,d\beta
         =\!\!\text{CVaR2 risk}&\cr
  \hskip40pt
  \cD(X)=\frac{1}{1-\alpha}\!\int_{\alpha}^{1}\cvar_\beta(X)\,d\beta-\bbe[X]
         =\!\!\text{CVaR2 deviation}&\cr
  \hskip40pt
  \cV(X)=\frac{1}{1-\alpha}\!\int_{0}^{1}[\cvar_\beta(X)]_+\,d\beta
         =\!\!\text{CVaR2 regret}&\cr
  \hskip40pt
  \cE(X)=\frac{1}{1-\alpha}\!\int_{0}^{1}[\cvar_\beta(X)]_+\,d\beta-\bbe[X]
         =\!\!\text{CVaR2 error}&\cr
}
$$
\end{tcolorbox}
\end{example}

\begin{example}[Quantile Symmetric Average Quadrangle, $\alpha \in (0,1)$]\label{ex:cvar norm} Addressing the limitations of classical $\cL^p$-norms in functional analysis, statistical estimation, and machine learning has driven interest in alternative norms capable of quantifying the risk of rare events. One promising candidate is the CVaR norm, which provides robust performance in various regression scenarios.

Building on this motivation, the quantile symmetric average quadrangle (originally named the "CVaR Norm Quadrangle" in \cite{CVaRNorm2}) --- developed and studied in \cite{CVaRNorm2} --- introduces the CVaR norm as its error function, earlier explored in \cite{CVaRNorm, bertsimas2011robust}. This framework has shown noteworthy success in linear regression applications, where \cite{Anton2022SVR} demonstrated that regression with respect to the CVaR norm is equivalent to the well-known $\nu$-Support Vector Regression (SVR) \cite{NewSVM} and Stable Regression \cite{stable_regres}, which is based on a well-known dual formulation of CVaR.  

\emph{The quantile symmetric average quadrangle is regular.}
    \begin{tcolorbox}[title = 4. Quantile Symmetric Average Quadrangle\\ (CVaR Norm Quadrangle), halign title = center,colback=white,  colbacktitle=white, coltitle=black, fonttitle=\bfseries]
      $$
\eqalign{
  \hskip40pt
  \cS(X)=\frac{1}{2}\!\left(\var_{\frac{(1-\alpha)}{2}}(X)+\var_{\frac{(1+\alpha)}{2}}(X)\right)
         =\!\!\text{symmetric VaR}&\cr
  \hskip40pt
  \cR(X)=\frac{1}{2}\Big((1+\alpha)\cvar_{\frac{(1-\alpha)}{2}}(X)+(1-\alpha)\cvar_{\frac{(1+\alpha)}{2}}(X)\Big)
         &\cr
  \hskip40pt
  \cD(X)= \cR(X) - \bbe[X] =\!\!\text{symmetrized CVaR deviation}
&\cr
  \hskip40pt
  \cV(X)=\llangle X\rrangle_\alpha+\bbe[X]
         =\!\!\text{CVaR norm regret}&\cr
  \hskip40pt
  \cE(X)=(1-\alpha)\cvar_\alpha(|X|)
         =\llangle X\rrangle_\alpha
         =\!\!\text{CVaR norm}&\cr
}
$$

    \end{tcolorbox}
\end{example}

\begin{example}[Quantile Symmetric Average Union Quadrangle, $ 0 \leq \varepsilon < \\\frac{1}{2}(\ess(X) - \esi(X))$]\label{ex:qsauq}
Integrating machine learning tools with classical statistics and risk management has long been of interest for creating robust and versatile estimation frameworks. A notable step in this direction is the effort to embed well-known methods like 
$\varepsilon$-SVR \cite{SVR, SVR2} into the Risk Quadrangle framework.

As constructed in \cite{Anton2022SVR}, the quantile symmetric average union quadrangle accomplishes this integration: the optimal solution to the regression problem with the Vapnik error emerges as an estimate of the average of two symmetric quantiles, effectively bridging the gap between traditional statistical tools and modern machine learning methods.
Define 
$$\cA_\varepsilon(X) = \Lset \alpha \in [0,1) \Mset  \varepsilon \in \frac{1}{2}\Big(\var_{\frac{1+\alpha}{2}}(X) - \var_{\frac{1-\alpha}{2}}(X)\Big)\Rset.$$
\begin{tcolorbox}[title = 5. Quantile Symmetric Average Union Quadrangle, halign title = center,colback=white,  colbacktitle=white, coltitle=black, fonttitle=\bfseries]
   $$
{\openup2\jot
\eqalign{
  \hskip20pt
  \cS(X)=
    \bigcup\limits_{\alpha\in\cA_\varepsilon(X)}
      \tfrac{1}{2}\!\bigl(\var_{\frac{1-\alpha}{2}}(X)+\var_{\frac{1+\alpha}{2}}(X)\bigr)&\cr
  \hskip20pt
  \cR(X)=
    \tfrac{1}{2}\Bigl((1+\alpha)\cvar_{\frac{1-\alpha}{2}}(X)
      +(1-\alpha)\cvar_{\frac{1+\alpha}{2}}(X)\Bigr)&\cr
    \hskip58pt  -(1-\alpha)\varepsilon, \quad\forall\;\alpha\in\cA_\varepsilon(X)&\cr
  \hskip20pt
  \cD(X)=
    \cR(X) - \bbe[X]&\cr
  \hskip20pt
  \cV(X)=\bbe[|X|-\varepsilon]_+ + \bbe[X]=\!\!\text{Vapnik regret}&\cr
  \hskip20pt
  \cE(X)=\bbe[|X|-\varepsilon]_+=\!\!\text{Vapnik error}&\cr
}}
$$ 
\end{tcolorbox}
Note that the risk measure $\cR(X)$ in the quantile symmetric average union quadrangle is the same for all $\alpha \in \cA_\varepsilon(X).$
\emph{The quantile symmetric average union quadrangle is not regular. For instance, axiom (A9) fails for its error measure. Nonetheless, it is a subregular risk quadrangle (see Definition \ref{def:subreg quadrangle}).}
\end{example}

\begin{example}[Expectile-Based Quadrangle (Asymmetric Mean Squared Error\\ (MSE) Version), $q \in (0,1)$]
 The \emph{expectile} of a random variable $X$ at confidence level $q \in (0,1)$ is defined as (see \cite{Newey})  
\begin{equation}\label{expectile definition}
     e_q(X) = \argmin_{C\in \bbr}\left\{\cE(X-C)\right\},
\end{equation}
where $\cE(X) =  \bbe[qX_+^2 + (1-q)X_-^2]$ is the \emph{asymmetric MSE}.
The case $q = 0.5$ gives the usual mean value. 

The expectile-based quadrangle (asymmetric mean squared error version) was introduced in \cite{Expect} and studied in \cite{Anton2024expectile}. This quadrangle generalizes the mean quadrangle introduced in \cite{MR3103448}. \emph{The expectile-based quadrangle (asymmetric mean squared error version) is regular.}
\begin{tcolorbox}[title = 6. Expectile-Based Quadrangle\\ (Asymmetric Mean Squared Error Version), halign title = center,colback=white,  colbacktitle=white, coltitle=black, fonttitle=\bfseries]
$$
\eqalign{
  \hskip40pt
  \cS(X)=e_q(X)=\!\!\text{expectile}&\cr
  \hskip40pt
  \cR(X)=\cD(X)+\bbe[X]=\!\!\text{asymmetric risk}&\cr
  \hskip40pt
  \cD(X)=q\,\bbe\!\left[\bigl((X-e_q(X))_+\bigr)^2\right]
         +(1-q)\,\bbe\!\left[\bigl((X-e_q(X))_-\bigr)^2\right]
         &\cr
  \hskip40pt
  \cV(X)=\cE(X)+\bbe[X]
         =\!\!\text{asymmetric regret}&\cr
  \hskip40pt
  \cE(X)=\bbe\!\left[q\,X_+^{\,2}+(1-q)\,X_-^{\,2}\right]
         =\!\!\text{asymmetric MSE}&\cr
}
$$
\end{tcolorbox}

\end{example}

\begin{example}[Expectile-Based Quadrangle (Piecewise Linear Version), $K>0$] \label{ExpectileExample}
Despite growing interest in expectiles for risk management and statistical estimation, there has been no unified framework for both efficient optimization and statistical estimation.

Expectile can also be defined by the necessary condition of extremum for \eqref{expectile definition} as a solution to the equation
\begin{equation}\label{expectile optimality condition 1}
    q\bbe[(X-C)_+] = (1-q)\bbe[(X-C)_-].
\end{equation}
With formula $\bbe[X-C] = \bbe[(X-C)_+] - \bbe[(X-C)_-]$, equation \eqref{expectile optimality condition 1} is equivalently transformed to
\begin{equation}\label{expectile optimality condition 2}
    C - \bbe[X] = \frac{1}{K}\bbe[(X-C)_+],
\end{equation}
where $K = \frac{1-q}{2q-1}$. A one-to-one correspondence exists between values  $K>0$ and values $q$ in the interval $1/2 < q < 1$. Also, there is a one-to-one correspondence between values $K$ in the interval $K < -1$ and values $q$ in the interval $0 < q< 1/2$.

The equation \eqref{expectile optimality condition 2} should be considered separately for $K > 0  \ (1/2<q<1)$ and for $K<-1 \  (0<q<1/2)$ because properties of expectile as a function of the parameter $q$ change at the point $q = 1/2$. 

For the range  $K > 0  \ (1/2<q<1)$, expectile is a coherent risk measure in the basic sense (cf. \cite{ShapiroKusuoka}).

The expectile-based quadrangle (piecewise linear version) introduced in \cite{Expect} and studied in \cite{Anton2024expectile} has the expectile, $e_K$, as a risk measure as well as the statistic. Having the expectile as a risk allows us to leverage the \cite[Regret Theorem]{MR3103448} for efficient expectile optimization. Having a piecewise linear error allows for the reduction of the expectile estimation with linear regression to a linear programming problem. 

\emph{The expectile-based quadrangle (piecewise linear version) is regular.}
\begin{tcolorbox}[title = 7. Expectile-Based Quadrangle (Piecewise Linear Version), halign title = center,colback=white,  colbacktitle=white, coltitle=black, fonttitle=\bfseries]
       $$\eqalign{
  \hskip40pt
  \cS(X)=e_K(X)=\!\!\text{expectile}
&\cr
  \hskip40pt
  \cR(X)  = e_K(X) =\!\!\text{expectile risk} &\cr
  \hskip40pt
  \cD(X) = e_K(X - \bbe[X])
                    =\!\!\text{expectile deviation} &\cr
  \hskip40pt
  \cV(X) = \left(\bbe\left[X + \frac{1}{K}X_+\right]\right)_+ 
                    =\!\!\text{piecewise linear regret} &\cr
  \hskip40pt
  \cE(X) = \max\left\{-\bbe[X], \frac{1}{K}\bbe[X_+]\right\}
                   =\!\!\text{piecewise linear error} &}
$$ 
\end{tcolorbox}

\end{example}

\begin{example}[Mean-Based Quadrangle (Piecewise Linear Version)]\label{ex:mean_lin} Modern risk management and statistical estimation often require alternatives to the classical MSE. A piecewise linear approach can offer greater flexibility and potentially more efficient computational methods, motivating the development of new frameworks that incorporate mean–absolute risk measures within standard regression tasks. 

The mean-based quadrangle (piecewise linear version), introduced and studied by \cite{Anton2024biased}, provides exactly such a framework. By pairing a mean–absolute risk measure with the statistical estimation of mean value via regression, this quadrangle replaces the classical mean squared error with a piecewise linear alternative. Crucially, the resulting linear regression can be efficiently reformulated as a linear programming problem, offering computational advantages and more robust modeling possibilities.

\emph{The mean-based quadrangle (piecewise linear version) is regular.}
\begin{tcolorbox}[title = 8. Mean-Based Quadrangle (Piecewise Linear Version), halign title = center,colback=white,  colbacktitle=white, coltitle=black, fonttitle=\bfseries] 
  $$\eqalign{
  \hskip40pt
  \cS (X) = \bbe[X]= \!\!\text{mean}
&\cr
  \hskip40pt
  \cR (X)  = \bbe[X-\bbe[X]]_+ + \bbe[X]= \!\!\text{mean--absolute risk}
&\cr
  \hskip40pt
  \cD (X)  = \bbe[X-\bbe[X]]_+= \!\!\text{mean--absolute deviation}
  &\cr
  \hskip40pt
  \cV  (X) = \max \{\bbe[X_-], \bbe[ X_+]\} + \bbe[X]= \!\!\text{mean--absolute regret} 
  &\cr
  \hskip40pt
  \cE(X) = \max \{\bbe[X_-], \bbe[X_+]\}= \!\!\text{adjusted mean--absolute error}}
$$     
\end{tcolorbox}
 
\end{example}

\begin{example}[Biased Mean-Based Quadrangle, $x \in \bbr$] A growing need exists for alternative regression frameworks that go beyond classical mean-based approaches, particularly in engineering and finance. The concept of superexpectation (SE) \cite{rockafellar2014random} offers a way to capture biased mean estimates, which can be more relevant for certain practical applications. For instance, estimation of factors driving portfolio loss exceeding expected loss by a specified amount (e.g., $x=$ \$10 billion) or estimation of factors impacting a specific excess release of radiation in the environment (nuclear safety regulations specify different severity levels). Leveraging these ideas, the biased mean quadrangle developed in \cite{Anton2024biased} enables conditional biased mean estimation.

Within this biased mean quadrangle, the statistic is the biased mean $\cS(X)=x+\bbe[X]$, for any  $x \in \bbr.$ The superexpectation error plays a central role in estimating the conditional biased mean via regression, creating a wide range of potential engineering and financial applications. Notably, this approach to regression is equivalent to quantile regression; rather than specifying a confidence level 
$\alpha \in (0,1)$, one determines the desired quantile by choosing a distance 
$x \in \bbr$ from the mean. Furthermore, the mean-based quadrangle (piecewise linear version) in Example \ref{ex:mean_lin} emerges as a special case of this quadrangle when $x=0$.

\emph{The biased mean quadrangle is not regular. However, it is a subregular risk quadrangle, see Definition \ref{def:subreg quadrangle}.}
\begin{tcolorbox}[title = 9. Biased Mean-Based Quadrangle, halign title = center,colback=white,  colbacktitle=white, coltitle=black, fonttitle=\bfseries]
       $$\eqalign{
  \hskip40pt
  \cS (X) = x + \bbe[X]=\!\!\text{biased mean}
&\cr
  \hskip40pt
  \cR (X)  =  \bbe[X -\bbe[X]-x]_+  - x_- +\bbe[X] = \!\!\text{SE risk} 
&\cr
  \hskip40pt
  \cD (X) = \bbe[X -\bbe[X]-x]_+  - x_- = \!\!\text{SE deviation}
  &\cr
  \hskip40pt
  \cV  (X) = \max \{\bbe[X_-] - x_+, \bbe[ X_+] - x_-\} + \bbe[X] = \!\!\text{SE regret} 
  &\cr
  \hskip40pt
  \cE (X) = \max \{\bbe[X_-] - x_+, \bbe[X_+] - x_-\} = \!\!\text{SE error}}
$$ 
\end{tcolorbox}
\end{example}

\begin{example}[$\varphi$--Divergence-Based Quadrangle, $\beta>0$] The $\varphi$--divergence-based quadrangle introduced and studied in \cite{Cheng2024divergence} is based upon the concept of distributionally robust risk measures studied in \cite{shapiro2017dro, pichler}. The function $\varphi$ here is a so-called extended divergence function (we call $\varphi$ a divergence function if it additionally satisfies $ \varphi(x) = +\infty \text{for} x < 0$), i.e., a convex lower-semicontinuous function $\varphi: {\mathbb R} \to (-\infty,\infty]$ satisfying 
\begin{align}
        \varphi(1) = 0, \quad 1 \in \mathrm{int}(\{x:\varphi(x) < +\infty\}) \;.
    \end{align}
In what follows, $\varphi^*$ denotes the convex conjugate function of $\varphi.$ This quadrangle provides an interpretation of portfolio optimization, classification, and regression as robust (or distributionally robust) optimization. \emph{The $\varphi$--divergence-based quadrangle is regular.}
\begin{tcolorbox}[title = 10. $\varphi$--Divergence-Based Quadrangle, halign title = center,colback=white,  colbacktitle=white, coltitle=black, fonttitle=\bfseries]
        $$
\eqalign{
  \hskip40pt
  \cS_{\varphi,\beta}(X)=\argmin\limits_{C\in\bbr}\inf\limits_{\lambda>0}\lambda\Bigl\{\frac{C}{\lambda}+\beta+\mathbb{E}\bigl[\varphi^{*}\bigl(\tfrac{X-C}{\lambda}\bigr)-\tfrac{X}{\lambda}\bigr]\Bigr\}&\cr
  \hskip40pt
  \cR_{\varphi,\beta}(X)=\inf\limits_{\substack{C\in\bbr\\ \lambda>0}}\lambda\Bigl\{C+\beta+\mathbb{E}\bigl[\varphi^{*}\bigl(\tfrac{X}{\lambda}-C\bigr)\bigr]\Bigr\}&\cr
  \hskip40pt
  \cD_{\varphi,\beta}(X)=\inf\limits_{\substack{C\in\bbr\\ \lambda>0}}\lambda\Bigl\{C+\beta+\mathbb{E}\bigl[\varphi^{*}\bigl(\tfrac{X}{\lambda}-C\bigr)-\tfrac{X}{\lambda}\bigr]\Bigr\}&\cr
  \hskip40pt
  \cV_{\varphi,\beta}(X)=\inf\limits_{\lambda>0}\lambda\Bigl\{\beta+\mathbb{E}\bigl[\varphi^{*}\bigl(\tfrac{X}{\lambda}\bigr)\bigr]\Bigr\}&\cr
  \hskip40pt
  \cE_{\varphi,\beta}(X)=\inf\limits_{\lambda>0}\lambda\Bigl\{\beta+\mathbb{E}\bigl[\varphi^{*}\bigl(\frac{X}{\lambda}\bigr)-\frac{X}{\lambda}\bigr]\Bigr\}&\cr
}
$$
\end{tcolorbox}

\end{example}

\begin{remark}\label{rem:parent_quadrangle} The $\varphi$--divergence-based quadrangle is a special case of a broader scheme based on the generalized stochastic divergences introduced in Section~\ref{sec:gen families}. Briefly, an axiomatically defined stochastic divergence $\cJ$ gives rise (via the Fenchel--Legendre transform) to a \emph{subregular} risk measure $\cR$, for which the associated quadrangle elements $\cD$, $\cV$, $\cE$, and $\cS$ can be derived. Consequently, for $\beta>0$, one can construct a $\cJ$--divergence-based quadrangle as follows (cf.\ Proposition~\ref{prop:stocdiv}):
\begin{align*}
 \cS_{\cJ,\beta} (X) &= \argmin_{C \in \bbr}\inf_{\lambda > 0}\lambda\Big\{\beta  + \cE\bigl(\lambda^{-1}(X-C)\bigr)\Big\}\\ 
 \cR_{\cJ,\beta} (X) &= \inf_{\lambda > 0}\lambda\Big\{\beta  + \cR(\lambda^{-1}X)\Big\}\\ 
 \cD_{\cJ,\beta} (X) &=\inf_{\lambda > 0}\lambda\Big\{\beta  + \cD(\lambda^{-1}X)\Big\}\\ 
 \cV_{\cJ,\beta} (X) &=\inf_{\lambda > 0}\lambda\Big\{\beta  + \cV(\lambda^{-1}X)\Big\}\\ 
 \cE_{\cJ,\beta} (X) &=\inf_{\lambda > 0}\lambda\Big\{\beta  + \cE(\lambda^{-1}X)\Big\}
\end{align*}
In this setting, we refer to $(\cR,\cD,\cV,\cE)$ as the \emph{parent quadrangle} of the $\cJ$--divergence-based quadrangle defined above. Naturally, each element of the parent quadrangle is referred to with the prefix ``parent'' (e.g., \emph{parent risk}).     
\end{remark}

The following examples are specific instances of the $\varphi$-divergence-based quadrangle for different divergence functions $\varphi.$

\begingroup
  \addtocounter{example}{-1}
  \renewcommand\theexample{10.1}
  \begin{example}[Kullback--Leibler Divergence-Based Quadrangle, $\alpha \in (0,1)$]\label{evar quadr} 
   The Kullback--Leibler divergence-based quadrangle is built upon the entropic value-at-risk (EVaR) introduced and studied in \cite{ahmadi2012entropic}.  
The divergence function and its convex conjugate are
$$\varphi(x) = x\ln(x) -x +1, \quad \varphi^*(z) = \exp(z)-1 .$$ 
Let $\beta \coloneqq \ln\!\left(\frac{1}{1-\alpha}\right)$. The complete quadrangle can be written as follows:
\begin{tcolorbox}[title = 10.1. Kullback--Leibler Divergence-Based Quadrangle, halign title = center, colback=white, colbacktitle=white, coltitle=black, fonttitle=\bfseries]
$$
{\openup2\jot
\eqalign{
  \hskip40pt
  \cS_{\varphi,\beta}(X)=\lambda^{*}\ln\mathbb{E}\!\left[\exp\!\left(\frac{X}{\lambda^{*}}\right)\right]&\cr
  \hskip40pt
  \cR_{\varphi,\beta}(X)=\inf\limits_{\lambda>0}\lambda\Bigl\{\beta+\ln\mathbb{E}\!\Bigl[\exp\!\left(\frac{X}{\lambda}\right)\Bigr]\Bigr\}&\cr
  \hskip40pt
  \cD_{\varphi,\beta}(X)=\inf\limits_{\lambda>0}\lambda\Bigl\{\beta+\ln\mathbb{E}\!\Bigl[\exp\!\left(\frac{X-\mathbb{E}[X]}{\lambda}\right)\Bigr]\Bigr\}&\cr
  \hskip40pt
  \cV_{\varphi,\beta}(X)=\inf\limits_{\lambda>0}\lambda\Bigl\{\beta+\mathbb{E}\!\bigl[\exp\!\left(\frac{X}{\lambda}\right)-1\bigr]\Bigr\}&\cr
  \hskip40pt
  \cE_{\varphi,\beta}(X)=\inf\limits_{\lambda>0}\lambda\Bigl\{\beta+\mathbb{E}\!\Bigl[\exp\!\left(\frac{X}{\lambda}\right)-\frac{X}{\lambda}-1\Bigr]\Bigr\}&\cr
}}
$$
\end{tcolorbox}  
In the quadrangle,  $\lambda^*=\lambda^*(X)$  is a solution of the following equation:
\begin{equation*}
     \lambda^{*}\beta + \lambda^{*}\ln{\mathbb{E}\bigl[e^{\frac{X}{\lambda^{*}}}\bigl]} - \frac{\mathbb{E}\bigl[Xe^{\frac{X}{\lambda^{*}}}\bigl]}{\mathbb{E}\bigl[e^{\frac{X}{\lambda^{*}}}\bigl]} = 0.
\end{equation*}
Following the terminology of Remark~\ref{rem:parent_quadrangle}, the Kullback--Leibler divergence-based quadrangle has a parent quadrangle. This parent quadrangle is a log--exponential-based quadrangle (cf.\ \cite[Example 8]{MR3103448}). The risk measure in this quadrangle is the well-known entropic risk, $\cR(X) = \ln \bbe[\exp{X}]$. 
  \end{example}
\endgroup
\begingroup
  \addtocounter{example}{-1}
  \renewcommand\theexample{10.2}
  \begin{example}[Total Variation Divergence-Based Quadrangle, $\beta \in (0,2)$]\label{eg_tvd}
   The total variation divergence-based quadrangle relies on Example 3.10 of \cite{shapiro2017dro}, where the derivation of the risk measure was carried out. Consider the following divergence function and its convex conjugate 
$$    \varphi(x) = 
    \begin{cases}
        |x - 1|, \quad &x \geq 0 \\
        + \infty, \quad &x<0
    \end{cases}
    \text{ and }
    \varphi^*(z) =
\begin{cases}
-1 + [z + 1]_+, \quad & z \leq 1 \\
+\infty, \quad  &  z > 1
\end{cases}
\;. \label{tvd_phi}$$
The complete quadrangle is as follows:
\begin{tcolorbox}[title = 10.2. Total Variation Divergence-Based Quadrangle, halign title = center,colback=white,  colbacktitle=white, coltitle=black, fonttitle=\bfseries]
$$
{\openup2\jot
\eqalign{
  \hskip40pt
  \cS_{\varphi,\beta}(X)=\ess(X)-2\,\var_{1-\frac{\beta}{2}}(X)&\cr
  \hskip40pt
  \cR_{\varphi,\beta}(X)=\frac{\beta}{2}\,\ess(X)+(1-\tfrac{\beta}{2})\,\mathrm{CVaR}_{\frac{\beta}{2}}(X)&\cr
  \hskip40pt
  \cD_{\varphi,\beta}(X)=\frac{\beta}{2}\,\ess(X)+(1-\tfrac{\beta}{2})\,\mathrm{CVaR}_{\frac{\beta}{2}}(X)-\bbe[X]&\cr
  \hskip40pt
  \cV_{\varphi,\beta}(X)=
    \inf\limits_{\substack{\lambda>0\\ \lambda\ge\ess X}}
      \Bigl\{\,\lambda(\beta-1)+\mathbb{E}[X+\lambda]_+\,\Bigr\}&\cr
  \hskip40pt
  \cE_{\varphi,\beta}(X)=
    \inf\limits_{\lambda>0}
      \Bigl\{\,\lambda(\beta-1)+\mathbb{E}\bigl[[X+\lambda]_+ - X\bigr]\,\Bigr\}&\cr
}}
$$
 
\end{tcolorbox}
 
  \end{example}
\endgroup
\begingroup
  \addtocounter{example}{-1}
  \renewcommand\theexample{10.3}
  \begin{example}[Pearson Divergence-Based Quadrangle]\label{eg_chi_square}
      The Pearson diver\-gence-based quadrangle is a special case of the higher-order quantile-based quadrangle in Example 12 of \cite{MR3103448}. The second-order superquantile risk measure from this quadrangle was introduced and studied by Krokhmal  \cite{krokhmal2007higher_order}.
The divergence function and its convex conjugate are  
$$
     \varphi(x) = 
     \begin{cases}
      (x-1)^2,\quad &x\geq 0 \\
      +\infty,\quad &x<0
     \end{cases}
     \quad \ \textrm{ and } \ \quad  \varphi^*(z) =
     \begin{cases}
     \frac{(z+2)^2}{4}-1,\quad &z+2\geq 0 \\
     -1,\quad &z+2<0
     \end{cases}
     \;. \label{chi squared phi}
$$ 
The complete quadrangle is
\begin{tcolorbox}[title = 10.3. Pearson Divergence-Based Quadrangle, halign title = center,colback=white,  colbacktitle=white, coltitle=black, fonttitle=\bfseries]
  $$
{\openup2\jot
\eqalign{
  \hskip40pt
  \cS_{\varphi,\beta}(X)=
    \argmin\limits_{C\in\bbr}\!\Bigl(\sqrt{(\beta+1)\,\bbe\!\left[(X-C)^2_+\right]}
      -\bbe[X-C]\Bigr)&\cr
  \hskip40pt
  \cR_{\varphi,\beta}(X)=
    \min\limits_{C\in\bbr}\!\Bigl(\sqrt{(\beta+1)\,\bbe\!\left[(X-C)^2_+\right]}
      +C\Bigr)&\cr
  \hskip40pt
  \cD_{\varphi,\beta}(X)=
    \min\limits_{C\in\bbr}\!\Bigl(\sqrt{(\beta+1)\,\bbe\!\left[(X-C)^2_+\right]}
      -\bbe[X-C]\Bigr)&\cr
  \hskip40pt
  \cV_{\varphi,\beta}(X)=
    \sqrt{(\beta+1)\,\bbe\!\left[X^2_+\right]}&\cr
  \hskip40pt
  \cE_{\varphi,\beta}(X)=
    \sqrt{(\beta+1)\,\bbe\!\left[X^2_+\right]}-\bbe[X]&\cr
}}
$$

\end{tcolorbox} 
  \end{example}
\endgroup
\begingroup
  \addtocounter{example}{-1}
  \renewcommand\theexample{10.4}
  \begin{example}[Extended Pearson Divergence-Based Quadrangle]\label{eg_mean_quad}
  Consider the following extended divergence function and its convex conjugate
 \begin{equation*}
     \varphi(x) = (x-1)^2 \quad \ \textrm{and} \ \quad  \varphi^*(z) = \frac{z^2}{4} + z\;.
 \end{equation*}   
Then, the extended Pearson $\chi^2$-divergence risk measure is given by
$$
\begin{aligned}
    \cR_{\varphi,\beta}(X) &= \inf_{\lambda>0, C \in \bbr} \lambda \left\{ C + \beta + \frac{1}{4\lambda^2} \bbe[(X-C)^2] + \bbe\left[\frac{X-C}{\lambda}\right]\right\}\\
    &=  \inf_{\lambda>0, C \in \bbr} \left\{ \lambda\beta + \frac{1}{4\lambda} \bbe[(X-C)^2] + \bbe[X] \right\}\\
    & = \bbe[X] + \sqrt{\beta \mathbb{V}[X]},
\end{aligned}
$$
where $\mathbb{V}[X] = \bbe[(X-\bbe[X])^2]$ is the variance of $X$ and $(\lambda^*,C^*),$ which furnish the minimum are
$$\lambda^* = \sqrt{\frac{\mathbb{V}[X]}{4\beta}}, \qquad C^* = \bbe[X].
$$
The corresponding regret is given by 
\begin{align*}
    \cV_{\varphi,\beta}(X) &= \bbe[X] + \sqrt{\beta\bbe[X^2]}\\
    &= \bbe[X] + \sqrt{\beta}\norm{X}_2.
\end{align*}
Let $\tau = \sqrt{\beta}$ and $ \sigma(X) =\sqrt{\mathbb{V}[X]},$ then the complete quadrangle is 
\begin{tcolorbox}[title = 10.4. Extended Pearson Divergence-Based Quadrangle, halign title = center,colback=white,  colbacktitle=white, coltitle=black, fonttitle=\bfseries]
  $$
{\openup2\jot
\eqalign{
  \hskip40pt
  \cS_{\varphi,\tau}(X)=\bbe[X]=\!\!\text{mean}&\cr
  \hskip40pt
  \cR_{\varphi,\tau}(X)=\bbe[X]+\tau\,\sigma(X)=\!\!\text{safety margin tail risk}&\cr
  \hskip40pt
  \cD_{\varphi,\tau}(X)=\tau\,\sigma(X)=\!\!\text{standard deviation, scaled}&\cr
  \hskip40pt
  \cV_{\varphi,\tau}(X)=\bbe[X]+\tau\,\lVert X\rVert_{2}= \!\!\text{$\cL^2$--regret, scaled}&\cr
  \hskip40pt
  \cE_{\varphi,\tau}(X)=\tau\,\lVert X\rVert_{2}= \!\!\text{$\cL^2$--error, scaled}&\cr
}}
$$

\end{tcolorbox}
exactly the mean quadrangle in Example \ref{ex:standard}. The divergence function is the extended version of the divergence function of $\chi^2$--divergence. 
It is worth noting that the radius $\beta$ of the uncertainty set does not impact the regression result, since it only impacts the scale of the error function.

  \end{example}
\endgroup
\begingroup
  \addtocounter{example}{-1}
  \renewcommand\theexample{10.5}
  \begin{example}[Generalized Extended Pearson Divergence-Based Quadrangle]
     Let $0<q<1$. 
 Consider the following extended divergence function and its convex conjugate
$$
\varphi(x) = 
\begin{cases}
\frac{1}{1-q}(x-1)^2, \quad &x>1 \\
\frac{1}{q}(x-1)^2, \quad &x\leq 1
\end{cases}
\text{ and }
\varphi^*(z) =
\begin{cases}
\frac{(1-q)z^2}{4} + z
, \quad &z > 0\\
\frac{qz^2}{4} + z
,\quad &z \leq 0
\end{cases}
\;.
$$
The error measure is given by  
$$
\begin{aligned}
\cE_{\varphi,\beta}(X)
&=
\inf_{\lambda>0}\;
\lambda\beta + \bbe\left[
\lambda\varphi^*\left(\frac{X}{\lambda}\right) - X
\right] \\
&=
\inf_{\lambda>0}\;
\lambda\beta + 
\frac{1}{4\lambda}  \E\left[qX_{+}^2 + (1-q)X_{-}^2
\right] \\
&=
\lambda\beta + 
\frac{1}{4\lambda}  \E\left[qX_{+}^2 + (1-q)X_{-}^2
\right] \Big|_{\lambda = \sqrt{\frac{\E\left[qX_{+}^2 + (1-q)X_{-}^2 \right]
}{4 \beta}}} \\
&=
\sqrt{\beta \bbe \left[ qX_{+}^2 + (1-q)X_{-}^2 \right]}.
\end{aligned}
$$
Thus, the complete quadrangle is as follows
\begin{tcolorbox}[title = 10.5. Generalized Extended Pearson Divergence-Based\\ Quadrangle, halign title = center,colback=white,  colbacktitle=white, coltitle=black, fonttitle=\bfseries]
   $$
{\openup2\jot
\eqalign{
  \hskip10pt
  \cS_{\varphi,\beta}(X)=e_q(X)=\!\!\text{expectile}&\cr
  \hskip10pt
  \cR_{\varphi,\beta}(X)=\cD_{\varphi,\beta}(X)+\bbe[X]&\cr
  \hskip10pt
  \cD_{\varphi,\beta}(X)=
    \sqrt{\beta\Bigl(q\,\bbe\!\bigl[(X-e_q(X))_+^{\,2}\bigr]+(1-q)\,\bbe\!\bigl[(X-e_q(X))_-^{\,2}\bigr]\Bigr)}&\cr
  \hskip10pt
  \cV_{\varphi,\beta}(X)=\cE_{\varphi,\beta}(X)+\bbe[X]&\cr
  \hskip10pt
  \cE_{\varphi,\beta}(X)=
    \sqrt{\beta\,\bbe\!\bigl[q\,X_+^{\,2}+(1-q)\,X_-^{\,2}\bigr]}
    =\!\!\text{asymmetric $\cL^{2}$--error, scaled}&\cr
}}
$$

\end{tcolorbox}
Therefore, we recover the expectile-based quadrangle (asymmetric $\cL^2$--error version) introduced in \cite{Expect, Anton2024expectile}. 
The divergence function $\varphi(x)$ gives rise to a generalized Pearson $\chi^2$--divergence. 
  \end{example}
\endgroup

\section{Theoretical Framework}\label{sec:extended}
This section provides a theoretical foundation for defining, constructing, and utilizing risk quadrangles.

\subsection{Definitions and Axioms}\label{subsec: axioms and defs}
This subsection introduces the central definitions and axioms required for further development. We first fix the functional space setting and define regular measures of risk, deviation, regret, and error introduced in \cite{MR3103448} and refined in \cite{MR3357644}.


Let $(\Omega, {\cal M},\P)$ be a probability space, where $\Omega$ is a set of elementary outcomes, ${\cal M} \subseteq 2^\Omega$ is a $\sigma$-algebra of subsets of $\Omega$, and $\P$ is a probability measure on $(\Omega, {\cal M})$. A random variable (r.v.) $X: \Omega \to \bbr$ is a measurable function defined on $\Omega$ taking values in $\R$, defined up to the set of measure $0$, that is, r.v. $X$ and $Y$ such that $\P(X=Y)=1$ will be identified. The mathematical expectation of an r.v. $X$ is defined by $\E[X]=\int_\Omega X d\P$.

For $p\in[1,\infty]$, let ${\cal L}^p:={\cal L}^p(\Omega, {\cal M},\P)$ be a normed space of all r.v.'s $X$ with $\|X\|_p<\infty$, where $\|X\|_p=(\E[|X|^p])^{1/p}$ for $p<\infty$, and $\|X\|_\infty=\mathrm{ess}\sup|X|$. A functional ${\cal F}:{\cal L}^p \to (-\infty, \infty]$ is called \emph{convex} if $${\cal F}(\lambda X + (1-\lambda) Y) \leq \lambda {\cal F}(X) + (1-\lambda){\cal F}(Y) \textrm{ for all } X,Y \in {\cal L}^p  \textrm{ and every } \lambda \in [0,1]$$  

and \emph{closed (or lower-semicontinuous)} if $$\{X: {\cal F}(X)\leq C\} \textrm{ is a closed set in } {\cal L}^p \textrm{ for every constant } C.$$ 

\begin{definition}[Regular risk]\label{def:reg_risk}
  A closed convex functional $\cR: \cL^p \to (-\infty, \infty] $ is called a \emph{regular measure of risk} if
  \begin{itemize}
      \item[(R0)] $\cR(C) = C  \textrm{ for constants } C \quad \textrm{ and } \quad \cR(X) > \bbe X \textrm{ for nonconstant } X.$
  \end{itemize}
\end{definition}

\begin{definition}[Coherent risk] A closed convex functional $\cR: \cL^p \to (-\infty, \infty] $ is called a \emph{coherent measure of risk} in
the basic sense if
\begin{itemize}
    \item[(C0)] $\cR(C) = C  \textrm{ for constants } C;  
$
\item[(C1)] $\cR(\lambda X) = \lambda \cR(X)$ for all $\lambda >0;$  
\item[(C2)] $\cR(X) \leq \cR(Y)$ for all $X$ and $Y$ such that $X \leq Y$ almost surely, 
\end{itemize}
and coherent in the general sense if (C1) is left out.
\end{definition}

\begin{definition}[Regular deviation]\label{def:reg_dev}
 A closed convex functional $\cD: \cL^p \to [0, \infty] $ is called a \emph{regular measure of deviation} if
 \begin{itemize}
     \item[(D0)] $\cD(C) = 0  \textrm{ for constants } C \quad \textrm{ and } \quad \cD(X) > 0 \textrm{ for nonconstant } X.$
 \end{itemize}   
\end{definition}

\begin{definition}[Regular regret]\label{def:reg_regret}
 A closed convex functional $\cV: \cL^p \to (-\infty, \infty] $ is called a \emph{regular measure of regret} if
 \begin{itemize}
     \item[(V0)] $\cV(0) = 0 \quad \textrm{ and } \quad \cV(X) > \bbe X \textrm{ for } X \not\equiv 0.$
 \end{itemize}   
\end{definition}

\begin{definition}[Coherent regret]\label{def:coherent_regret} A closed convex functional $\cV: \cL^p \to (-\infty, \infty] $ is called a \emph{coherent measure of regret} in
the basic sense if
\begin{itemize}
    \item[(K0)] $\cV(0) = 0;$
\item[(K1)] $\cV(\lambda X) = \lambda \cV(X)$ for all $\lambda >0;$  
\item[(K2)] $\cV(X) \leq \cV(Y)$ for all $X$ and $Y$ such that $X \leq Y$ almost surely, 
\end{itemize}
and coherent in the general sense if (K1) is left out.
\end{definition}

\begin{definition}[Regular error]\label{def:reg_error}
 A closed convex functional $\cE: \cL^p \to [0, \infty] $ is called a \emph{regular measure of error} if
 \begin{itemize}
     \item[(E0)] $\cE(0) = 0 \quad \textrm{ and } \quad \cE(X) > 0 \textrm{ for } X \not\equiv 0.$
 \end{itemize}    
\end{definition}

\begin{definition}[Regular quadrangle] \label{risk quadrangle} A quartet $(\cR,\cD,\cV,\cE)$ of regular risk, deviation, regret, and error is called a \emph{regular risk quadrangle with a statistic} $\cS$ if it satisfies the following relationship formulae:
\begin{itemize}
    \item[(Q1)] \textbf{error projection:} $\cD(X)= \min\limits_C\!\lset\cE(X-C)\rset$;
    \item[(Q2)] \textbf{regret formula:} $\cR(X)= \min\limits_C\!\lset C+\cV(X-C)\rset$;
    \item[(Q3)] \textbf{mean-centering:} \begin{equation}\label{eq:riskdev}
	{\cal R}(X)={\cal D}(X)+{\mathbb E}[X], \quad {\cal D}(X)={\cal R}(X)-{\mathbb E}[X]
\end{equation}
\begin{equation}\label{eq:errregr}
	{\cal V}(X)={\cal E}(X)+{\mathbb E}[X], \quad {\cal E}(X)={\cal V}(X)-{\mathbb E}[X];
\end{equation}

\end{itemize}
where the $\argmin$ in (Q1) and the $\argmin$ in (Q2) coincide, i.e.,
\begin{itemize}
    \item[(Q4)] \textbf{statistic:} $ \cS(X) =\argmin_C\!\lset\cE(X-C)\rset
          =\argmin_C\!\lset C+\cV(X-C)\rset$.
\end{itemize}
\end{definition}
\begin{remark}[Coherent quadrangle]\label{rem:coherent_quadrangle}
 A quartet $(\cR,\cD,\cV,\cE)$ satisfying relationship formulae (Q1)--(Q4), where $\cR$ is a coherent measure of risk is called a \emph{coherent risk quadrangle with a statistic} $\cS$.   
\end{remark}
The axioms listed above are too restrictive to accommodate certain examples commonly used in the literature. One key example is the Vapnik error, defined by
\begin{equation}\label{eq:vapnik}
	{\cal E}(X)={\mathbb E}[|X|-\varepsilon]_+,
\end{equation}
where $\varepsilon\geq 0$ is a parameter. The idea behind the Vapnik error is to disregard errors whose absolute values are below $\varepsilon$ while penalizing all errors exceeding that threshold. For any $\varepsilon>0$, Vapnik error \eqref{eq:vapnik} does not satisfy (E0), because ${\cal E}(X)=0$ for r.v. $X$ such that $X=\varepsilon$ almost surely. 

This observation motivates a relaxation of the axioms listed above, leading to the following definitions.

\begin{definition}[Subregular risk]\label{def:risk}
	A closed convex functional ${\cal R}:\cL^p\to(-\infty,\\\infty]$ is called a \emph{subregular risk measure} if 
	\begin{itemize}
		\item[(R1)] ${\cal R}(C)=C$ for constants $C$ and ${\cal R}(X)\geq {\mathbb E}[X]$ for all $X$;
		\item[(R2)] for all non-constant $X \in \cL^p$ there exists $\lambda>0$ such that ${\cal R}(\lambda X) > {\mathbb E}(\lambda X)$.
	\end{itemize}
\end{definition}

\begin{definition}[Subregular deviation]\label{def:dev}
	A closed convex functional ${\cal D}:\cL^p\to[0,\infty]$ is called a \emph{subregular deviation measure} if
	\begin{itemize}
		\item[(D1)] ${\cal D}(C)=0$ for constants $C$ and ${\cal D}(X)\geq 0$ for all $X$;
		\item[(D2)] for all non-constant $X \in \cL^p$ there exists $\lambda>0$ such that ${\cal D}(\lambda X) > 0$.
	\end{itemize}
\end{definition}

\begin{definition}[Subregular regret]\label{def:regret}
	A closed convex functional ${\cal V}:\cL^p\to(-\infty,\infty]$ is called a \emph{subregular regret measure} if 
	\begin{itemize}
		\item[(V1)] $\cV(0)=0$ and ${\cal V}(X)\geq {\mathbb E}[X]$ for all $X$;
		\item[(V2)] for all non-zero $X \in \cL^p$ there exists $\lambda>0$ such that ${\cal V}(\lambda X) > {\mathbb E}(\lambda X)$.
	\end{itemize}
\end{definition}

\begin{definition}[Subregular 
 error]\label{def:error}
	A closed convex functional ${\cal E}:\cL^p\to[0,\infty]$ is called a \emph{subregular error measure} if
	\begin{itemize}
		\item[(E1)] ${\cal E}(0)=0$ and ${\cal E}(X)\geq 0$ for all $X$;
		\item[(E2)] for all non-zero $X \in \cL^p$ there exists $\lambda>0$ such that ${\cal E}(\lambda X) > 0$.
	\end{itemize}
\end{definition}

\begin{definition}[Subregular quadrangle]\label{def:subreg quadrangle} A quartet $(\cR,\cD,\cV,\cE)$ of subregular risk, deviation, regret, and error satisfying (Q1)--(Q4) is called a \emph{subregular risk quadrangle with a statistic} $\cS$.
    
\end{definition}

Axioms (E1)--(E2) relax axiom (E0) by allowing $\mathcal{E}(X) = 0$ for some nonzero but “not-too-large” random variables $X$. For example, the Vapnik error \eqref{eq:vapnik} satisfies (E1)--(E2) and thus qualifies as a subregular error measure. Similarly, axioms (V1)--(V2), (D1)--(D2), and (R1)--(R2) relax axioms (V0), (D0), and (R0), respectively.

In the following section, we will extend the theory developed in
\cite{MR3103448,MR3357644} for measures of error, regret, deviation, and risk 
to the more general subregular measures introduced in 
Definitions~\ref{def:risk}--\ref{def:error}.
In particular, we will show that relations (Q1)--(Q4) 
still hold in this broader framework.
In fact, relations (Q3) are straightforward to verify, while (Q2) is equivalent to (Q1) under (Q3), so we only need to prove (Q1).

\subsection{Fundamental Theorems and Construction of Quadrangles} \label{subsec:deviation}
This section states and proves the main theorems of the RQ framework. These theorems establish the relationships among the quadrangle elements and provide a foundation for constructing RQs.

Subsection \ref{sub: primal representation} discusses the theory in the primal space, i.e., the space where the stochastic functionals are defined. Meanwhile, Subsection \ref{sec:dual} focuses on the dual space and the aspects of conjugate duality.
\subsubsection{Primal Representation}
\label{sub: primal representation}

Rockafellar and Uryasev \cite{MR3103448} presented seven key theorems --- cornerstones of the RQ framework: \emph{Quadrangle Theorem, Scaling Theorem, Mixing Theorem, Reverting Theorem, Expectation Theorem, Regret Theorem, and  Convexity Theorem.} These theorems hold for regular quadrangles and can be found in \cite{MR3103448}.

This section revisits the aforementioned theorems for subregular quadrangles and provides proofs where the extension to subregularity is non-trivial. We begin with a central theorem of the RQ framework: 
\begin{theorem}[Quadrangle Theorem]\label{Quadrangle Theorem} Let $(\cR, \cV, \cD, \cE)$ be a subregular risk quadrangle with a statistic $\cS$. Then
   \paritem{(a)} The relations $\cD(X) = \cR(X)-\bbe X$ and $\cR(X)= \bbe X+\cD(X)$
give a one-to-one correspondence between subregular measures of risk $\cR$
and subregular measures of deviation $\cD$.  In this correspondence, $\cR$ is
positively homogeneous if and only if $\cD$ is positively homogeneous.  On the other hand, 
\begin{equation}\label{3.16}
    \text{$\cR$ is monotone \iff 
                     $\cD(X)\leq \mathrm{ess}\sup X-\bbe X\,$}
\end{equation}  
 for all $X$. 

  \paritem{(b)} The relations $\cE(X) = \cV(X)-\bbe X$ and $\,\cV(X)= \bbe X+\cE(X)$ 
give a one-to-one correspondence between subregular measures of regret $\cV$
and subregular measures of error $\cE$.  In this correspondence, $\cV$ is
positively homogeneous if and only if $\cE$ is positively homogeneous.  On
the other hand, 
\begin{equation}\label{3.17}
   \cV  \text{is monotone} \iff
                     \cE(X)\leq |\bbe X|, \  X\leq 0.
\end{equation}

    \paritem{(c)} For any subregular measure of regret $\cV$, a subregular
measure of risk $\cR$ is obtained by 
\begin{equation}\label{3.18}
      \cR(X)=\inf_C\!\Lset C+\cV(X-C) \Rset.
\end{equation}

If $\cV$ is positively homogeneous, $\cR$ is positively homogeneous.
If $\cV$ is monotone, $\cR$ is monotone.
   \paritem{(d)} For any subregular measure of error $\cE$, a subregular
measure of deviation $\cD$ is obtained by 
\begin{equation}\label{eq:devfromerr}
  \cD(X)=\inf_C\!\Lset \cE(X-C) \Rset.    
\end{equation}

If $\cE$ is positively homogeneous, $\cD$ is positively homogeneous.
If $\cE$ satisfies the condition in \eqref{3.17}, then $\cD$ satisfies the
condition in \eqref{3.16}.
   \paritem{(e)} In both (c) and (d), as long as the expression being
minimized is finite for some $C$, the set of $C$ values for which the
minimum is attained, is a nonempty closed bounded interval (typically, this interval reduces to a single point).
Moreover, when $\cV$ and $\cE$ are paired as in (b), the interval comes out 
the same and gives the associated statistic: 
\[ 
\begin{aligned}
  \cS(X) &= \argmin_C\!\lset C+\cV(X-C)\rset\\
  &=\argmin_C\!\lset\cE(X-C)\rset, 
\end{aligned}
\]
with $\cS(X+C)=\cS(X)+C.$
\end{theorem}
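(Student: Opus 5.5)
The plan is to reduce everything to the error/deviation side. By mean-centering, $C+\cV(X-C)=\bbe X+\cE(X-C)$, so (c) and the regret half of (e) follow from (d) and the error half of (e) once (b) is established.

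\emph{Parts (a) and (b).} These are direct axiom transfers. For a convex lsc $\cR$, the functional $\cD=\cR-\bbe[\cdot]$ is convex and closed because $\bbe$ is continuous on $\cL^p$. Then (R1)$\Leftrightarrow$(D1) and (R2)$\Leftrightarrow$(D2) hold verbatim, and positive homogeneity passes because $\bbe$ is linear. The same applies to $\cV,\cE$.

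For \eqref{3.16}, the forward direction is immediate: $X\le \ess X$ and monotonicity give $\cR(X)\le\cR(\ess X)=\ess X$. For the converse, take $X\le Y$ and write
\[
X=\theta(Z/\theta)+(1-\theta)\bigl(Y/(1-\theta)\bigr),\qquad Z=X-Y\le 0 .
\]
Convexity gives $\cR(X)\le\theta\cR(Z/\theta)+(1-\theta)\cR(Y/(1-\theta))$, and the hypothesis gives $\cR(Z/\theta)\le \ess(Z/\theta)\le 0$. Then let $\theta\downarrow0$. The delicate point is passing to the limit in $\cR(Y/(1-\theta))$, which requires continuity of the convex function $t\mapsto\cR(tY)$ at $t=1$. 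I would follow the argument in \cite{MR3103448}, which is unaffected by subregularity. Equation \eqref{3.17} is handled identically.

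\emph{Key lemma (coercivity in $C$).} Let $\cE$ be subregular and $f_X(C)=\cE(X-C)$. Suppose $f_X(C_0)<\infty$. I claim that $f_X(C)\to\infty$ as $|C|\to\infty$, and more strongly that if $X_k\to X$, $\cE(X_k-C_k)\le a$, and $|C_k|\to\infty$, we reach a contradiction.

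Pass to a subsequence with $\mathrm{sign}(C_k)=d$. Apply convexity with the point $0$, using $\cE(0)=0$ and weight $\theta=s/|C_k|$:
\[
\cE\Bigl(\tfrac{s}{|C_k|}X_k - s d\Bigr)\le \tfrac{s}{|C_k|}\,a\to0 .
\]
Closedness of $\cE$ then gives $\cE(-sd)\le 0$, hence $\cE(-sd)=0$, for every $s>0$. This contradicts (E2) applied to the nonzero constant $-d$.

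Consequently $f_X$ is convex, lsc and coercive. Its argmin is therefore a nonempty closed bounded interval whenever $f_X$ is finite somewhere, which proves (e) for $\cE$. The translation identity $\cS(X+C)=\cS(X)+C$ is trivial.

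\emph{Part (d).} Convexity of $\cD$ follows because $\cD$ is the inf-projection of the jointly convex function $(X,C)\mapsto\cE(X-C)$. Also $\cD\ge0$, and $\cD(C)\le\cE(0)=0$.

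For closedness, take $X_k\to X$ with $\cD(X_k)\le a$ and pick $C_k$ with $\cE(X_k-C_k)\le a+1/k$. The lemma forces $(C_k)$ to be bounded, so $C_k\to C$ along a subsequence, and lsc of $\cE$ gives $\cD(X)\le\cE(X-C)\le a$.

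For (D2), suppose $X$ is nonconstant but $\cD(\lambda X)=0$ for all $\lambda>0$. Minimizers exist by the lemma, so $\cE(\lambda X-C_\lambda)=0$. The zero set $\{\cE=0\}$ is closed, convex and contains $0$. Hence for $\lambda_n\to\infty$ and fixed $t$,
\[
t\bigl(X-c_n\bigr)\in\{\cE=0\},\qquad c_n=C_{\lambda_n}/\lambda_n .
\]
There are two cases:
\begin{itemize}
\item If $(c_n)$ is bounded, a limit $c$ gives $\cE(t(X-c))=0$ for all $t>0$. Since $X-c\neq0$, this contradicts (E2).
\item If $(c_n)$ is unbounded, the same rescaling as in the lemma yields $\cE(-sd)=0$, again contradicting (E2).
\end{itemize}

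The remaining claims in (d) are routine. Homogeneity passes through the infimum by substituting $C\mapsto\lambda C$. For the monotonicity bound, choose $C=\ess X$, so that $X-C\le0$ and
\[
\cD(X)\le\cE(X-C)\le|\bbe(X-C)|=\ess X-\bbe X .
\]
Part (c) then follows from (d) and (b) via the opening identity. Monotonicity of $\cR$ comes from monotonicity of $\cV(X-C)$ in $X$ for each fixed $C$.

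The main obstacle is the coercivity lemma together with its reuse in the closedness and (D2) arguments. These are exactly where the old limit conditions and strict positivity were used in \cite{MR3103448}, and the rescaling-toward-$0$ trick combined with (E2) on constants is what must replace them.
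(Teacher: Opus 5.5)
\textbf{The gap.} One step fails as written: the converse directions in \eqref{3.16} and \eqref{3.17}. Letting $\theta\downarrow 0$ in $(1-\theta)\cR\bigl(Y/(1-\theta)\bigr)$ requires right-continuity of $t\mapsto\cR(tY)$ at $t=1$. This can fail even when $\cR(X)\le\mathrm{ess\,sup}\,X$ for all $X$. On $\cL^1$, the entropic risk $\cR(X)=\ln\mathbb{E}[e^X]$ is regular and satisfies that bound. Take $Y$ with $\mathbb{E}[e^Y]<\infty$ but $\mathbb{E}[e^{tY}]=\infty$ for all $t>1$; then your upper bound is $+\infty$ for every $\theta\in(0,1)$. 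So this step has to be fixed, not deferred to \cite{MR3103448}.

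The repair is to shrink $Y$ instead of inflating it, and to use closedness at $X$ rather than continuity at $Y$. With $Z=X-Y\le 0$ and $\cR(Y)<\infty$,
\[
\cR\bigl((1-\theta)Y+Z\bigr)\le(1-\theta)\cR(Y)+\theta\,\cR(Z/\theta)\le(1-\theta)\cR(Y).
\]
Since $(1-\theta)Y+Z=X-\theta Y\to X$ in $\cL^p$, lower semicontinuity gives $\cR(X)\le\cR(Y)$. The same argument works for $\cV$, because the condition in \eqref{3.17} says exactly that $\cV(W)\le 0$ for $W\le 0$.

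\textbf{The rest, and how it compares.} Everything else is correct. Your coercivity lemma is essentially the paper's Proposition~\ref{prop:lemma21}: the same rescaling toward $0$, then closedness, then (E2) applied to a constant. Your convexity and closedness arguments for $\cD$ match the paper's, except that you use $\epsilon$-minimizers and inf-projection where the paper uses attained minima.

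Your proof of (D2) takes a genuinely different route. The paper defines $f(Y)=\sup\{\lambda\ge0:\cE(\lambda Y)=0\}$ and shows it is finite and upper semicontinuous on nonzero $Y$. It then bounds $u\mapsto f(u_1X-u_2)$ on the compact unit circle, which gives a threshold $B_X$ with $\cE(\lambda X-C)>0$ for all $C$ once $\lambda>B_X$. You instead argue by contradiction along $\lambda_n\to\infty$ and normalize the minimizers to $c_n=C_{\lambda_n}/\lambda_n$. Because $\{\cE=0\}$ is closed, convex and contains $0$, you can pull $\lambda_n(X-c_n)$ back to $t(X-c_n)$. You then split into two cases: $(c_n)$ bounded gives a whole zero ray in the direction $X-c\neq 0$, and $(c_n)$ unbounded is just your coercivity lemma again (e.g.\ with $t=1$).

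Underneath, the compactness is the same: your two cases are the limit directions on the paper's circle with $u_1\neq 0$ and $u_1=0$. Your version is shorter, avoids the semicontinuity of $f$, and reuses the lemma. The paper's explicit threshold buys nothing extra, since $\{\lambda\ge 0:\cD(\lambda X)=0\}$ is an interval containing $0$ by convexity.
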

\begin{proof}
Parts (a) and (b) are easy to check, while part (c) follows easily from (d). Hence, we only need to prove parts (d) and (e). To do this, we first note that Lemma 2.1 in \cite{MR3357644} remains valid for subregular error measures. 

 \begin{proposition}\label{prop:lemma21}
	For a subregular error measure ${\cal E}:\cL^p\to[0,\infty]$ and a sequence $\left\{C_n\right\}_{n=1}^\infty$ of scalars,
	the following holds: If sequences $X_n \in \cL^p$ and $b_n \in {\mathbb R}$ converge to $X \in \cL^p$ and $b \in {\mathbb R}$ respectively, and ${\cal E}(X_n - C_n) \leq b_n$ for all $n$, then sequence $\left\{C_n\right\}_{n=1}^\infty$ is bounded, and any accumulation point $C_0$ satisfies ${\cal E}(X - C_0) \leq b$.
\end{proposition}
\begin{proof}
	By applying property (E2) to $X\equiv -1$ and $X\equiv 1$, we conclude that
	\begin{itemize}
		\item[(E2')] there exist constants $K_1 < 0 < K_2$ such that ${\cal E}(K_1)>0$ and ${\cal E}(K_2)>0$. 
	\end{itemize}
	We will prove the statement of the Proposition for any closed convex functional ${\cal E}:\cL^p\to[0,\infty]$ satisfying (E1) and (E2'). By contradiction, assume that sequence $\left\{C_n\right\}_{n=1}^\infty$ is unbounded. By passing to a subsequence if necessary, we may assume that all $C_n$ have the same sign (say, positive). Then we may also assume that $C_n \geq |K_1|$ for all $n$, and $\lim\limits_{n\to \infty} C_n=\infty$. Then sequence $\lambda_n=-K_1/C_n, n=1,2,\dots$ is contained in $[0,1]$ and converges to $0$. Because ${\cal E}$ is convex and ${\cal E}(0)=0$, we have ${\cal E}(\lambda Y) \leq \lambda {\cal E}(Y)$ for all $Y \in {\cal L}^p$ and all $\lambda \in [0,1]$. Hence,
	$$
	\lambda_n b_n \geq \lambda_n {\cal E}(X_n - C_n) \geq {\cal E}(\lambda_n X_n + K_1) \geq 0.
	$$ 
	Because $\lim\limits_{n\to \infty} \lambda_n b_n = \lim\limits_{n\to \infty} \lambda_n \lim\limits_{n\to \infty} b_n = 0 \cdot b = 0$ and $\lim\limits_{n\to \infty} (\lambda_n X_n + K_1) = 0 \cdot X + K_1 = K_1$, the closedness of ${\cal E}$ implies that ${\cal E}(K_1) = 0$, but this is a contradiction with (E2'). 
	
	Hence, the sequence $\left\{C_n\right\}_{n=1}^\infty$ is bounded. This implies that it must have accumulation points. If $C_0$ is any such point, then, by passing to a subsequence if necessary,  we may assume that $\lim\limits_{n\to \infty} C_n=C_0$.
	Then the closedness of ${\cal E}$ implies that 
	$$
	{\cal E}(X - C_0) = {\cal E}\left(\lim\limits_{n\to \infty} (X_n - C_n)\right) \leq \limsup\limits_{n\to \infty} {\cal E}(X_n - C_n) \leq \lim\limits_{n\to \infty} b_n = b. 
	$$ 
\end{proof}
Now, we prove statements (d) and (e). 

The proof is similar to the proof of Theorem 2.2 in \cite{MR3357644}. Let us first prove that ``inf'' in \eqref{eq:devfromerr} is always attained. If ${\cal D}(X)=\infty$, it is attained for all $C$. If ${\cal D}(X)<\infty$, then there exists a sequence $\left\{C_n\right\}_{n=1}^\infty$ such that $\lim\limits_{n\to \infty} {\cal E}(X - C_n)=D(X)$. Applying Proposition \ref{prop:lemma21} with $X_n=X$, $b_n={\cal E}(X - C_n)$ and $b={\cal D}(X)$, we obtain that sequence $C_n$ is bounded and has an accumulation point $C_0$ for which we have ${\cal E}(X - C_0) \leq b$. On the other hand, ${\cal E}(X - C_0) \geq \inf_C {\cal E}(X-C) = {\cal D}(X) = b$, hence equality holds. Thus, ``inf'' in \eqref{eq:devfromerr} is always attained, and the set of minimizers $S(X)$ is a non-empty set. Because ${\cal E}(X-C)$ is a convex, closed function of $C$, $S(X)$ is a convex and closed subset of ${\mathbb R}$, and therefore is a non-empty closed interval. For any sequence $\left\{C_n\right\}_{n=1}^\infty \subseteq S(X)$ we have ${\cal E}(X-C_n)={\cal D}(X)$, hence, by Proposition \ref{prop:lemma21} with $X_n= X$ and $b_n = {\cal D}(X)$ we conclude that $\left\{C_n\right\}_{n=1}^\infty$ is bounded. Thus, $S(X)$ is bounded. 
	
	We next prove that ${\cal D}$ in \eqref{eq:devfromerr} is a subregular deviation measure. Because ``inf'' in \eqref{eq:devfromerr} is always attained, for any $X,Y \in {\cal L}^p$ we have ${\cal D}(X)={\cal E}(X - C_X)$ and ${\cal D}(Y)={\cal E}(Y - C_Y)$ for some constants $C_X, C_Y$. Then, for any $\lambda\in[0,1]$,
    \begin{align*}
      \lambda {\cal D}(X) + (1-\lambda) {\cal D}(Y) &= \lambda {\cal E}(X-C_X) + (1-\lambda) {\cal E}(Y-C_Y)\\
      &\geq {\cal E}(\lambda X + (1-\lambda) Y - (\lambda C_X + (1-\lambda)C_Y))\\
      & \geq \inf_C {\cal E}(\lambda X + (1-\lambda) Y-C) = {\cal D}(\lambda X + (1-\lambda) Y),
    \end{align*}
	which implies that ${\cal D}$ is a convex function. We next prove its closedness. Assume that $X_n$ is a sequence converging to $X$ and ${\cal D}(X_n)\leq b < \infty$ for all $n$. Because ``inf'' in \eqref{eq:devfromerr} is always 
attained, there exists constants $C_n$ such that ${\cal E}(X_n - C_n)=
{\cal D}(X_n)\leq b$.  Then by Proposition \ref{prop:lemma21} with $b_n=b$, there exists a constant $C_0$ such that ${\cal E}(X - C_0) \leq b$. Then ${\cal D}(X) = \inf_C {\cal E}(X-C) \leq {\cal E}(X - C_0) \leq b$, which proves the closedness of ${\cal D}$.
	
	Property (D1) of ${\cal D}$ follows immediately from property (E1) of ${\cal E}$, so it is left to prove (D2). For any non-zero $Y \in {\cal L}^p$, let
	$$
		f(Y) = \sup\{\lambda \geq 0\, |\, {\cal E}(\lambda Y) = 0\}.
	$$
	Properties (E1) and (E2) imply that $0 \leq f(Y) < \infty$. Further, convexity and closedness of ${\cal E}$ in combination with (E1)  implies that ${\cal E}(\lambda Y) = 0$ for all $\lambda \in [0, f(Y)]$. Next, if a sequence $\left\{Y_n\right\}_{n=1}^\infty$ converges to $Y$ and $f(Y_n)\geq b$ for all $n$ and some constant $b$, then ${\cal E}(b Y_n) = 0$ for all $n$, which implies that ${\cal E}(b Y) = 0$, hence $f(Y) \geq b$. Thus, the function $f$ is upper-semicontinuous.
	
	Now, fix any non-constant r.v. $X \in {\cal L}^p$. Let $U$ be the set of unit vectors in ${\mathbb R}^2$. For any $u = (u_1, u_2) \in U$, let $g_X(u) := f(u_1 X - u_2)$. Because $u_1 X - u_2 \not\equiv 0$ for any $u \in U$, properties of $f$ imply that function $g_X(u)$ is non-negative, finite, and upper-semicontinuous on $U$. Because $U$ is a compact set, this implies that $B_X := \sup_{u \in U} g_X(u) < \infty$. Hence, for any constant $\mu > B_X$ and every $u=(u_1,u_2) \in U$, we have $\mu > g_X(u) = f(u_1 X - u_2)$, or equivalently ${\cal E}(\mu (u_1 X - u_2))>0$.
	
	Now, choose any $\lambda > B_X$, any $C \in {\mathbb R}$, and consider unit vector $u=(\lambda/\mu, C/\mu)$, where $\mu=\sqrt{\lambda^2+C^2}>B_X$. Then 
	$$
		0 < {\cal E}(\mu (u_1 X - u_2)) = {\cal E}(\mu ((\lambda/\mu) X - C/\mu)) = {\cal E}(\lambda X - C).
	$$
	Because $C \in {\mathbb R}$ was arbitrary, and the infimum in \eqref{eq:devfromerr} is always attained, this implies that ${\cal D}(\lambda X)>0$, and proves (D2), which completes the proof of Theorem \ref{Quadrangle Theorem}.
\end{proof}


Next, we establish the following property of the set of minimizers ${\cal S}(X)$.
\begin{proposition}\label{prop:statbounded}
	Let ${\cal E}$ be a subregular error measure. Let $\left\{X_n\right\}_{n=1}^\infty$ be the sequence of r.v.s $X_n \in {\cal L}^p$ converging to $X \in \cL^p$ such that 
	${\cal D}(X_n)\leq b$ for all $n=1,2,\dots$ and some $b \in {\mathbb R}$, where ${\cal D}$ is defined in \eqref{eq:devfromerr}. Then the union
	$$
		U = \bigcup_{n=1}^\infty {\cal S}(X_n)
	$$
	is a bounded set.
\end{proposition}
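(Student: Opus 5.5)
The plan is to argue by contradiction and reduce everything to Proposition~\ref{prop:lemma21}.

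Suppose $U$ is unbounded. Then there are indices $n_k$ and points $C_k \in {\cal S}(X_{n_k})$ with $|C_k| \to \infty$. Since each ${\cal S}(X_n)$ is bounded by Theorem~\ref{Quadrangle Theorem}(e), the indices $n_k$ must be unbounded. Passing to a subsequence, we may therefore assume $n_k$ is strictly increasing, so that $X_{n_k} \to X$ in ${\cal L}^p$. This step is where some care is needed: if infinitely many $C_k$ came from a single index, they would lie in one bounded set ${\cal S}(X_n)$, which is impossible.

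Note also that ${\cal S}(X_n)$ is nonempty. Since ${\cal D}(X_n)\le b<\infty$, the infimum in \eqref{eq:devfromerr} is attained, again by Theorem~\ref{Quadrangle Theorem}(e).

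By definition of the statistic, $C_k\in{\cal S}(X_{n_k})$ gives
\[
{\cal E}(X_{n_k}-C_k)={\cal D}(X_{n_k})\le b .
\]
We now apply Proposition~\ref{prop:lemma21} to the convergent sequence $X_{n_k}\to X$, the constant sequence $b_k=b$, and the scalars $C_k$. It yields that $\{C_k\}$ is bounded, contradicting $|C_k|\to\infty$. Hence $U$ is bounded.

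The only subtle point is the subsequence extraction in the first step, since Proposition~\ref{prop:lemma21} needs a genuinely convergent sequence of random variables. An alternative that avoids this is to note that, for each $n$, the sequence $X_m$ with $m\ge n$ is convergent. One can then apply Proposition~\ref{prop:lemma21} directly to an arbitrary selection $C_n\in{\cal S}(X_n)$ indexed by all $n$. This shows that every such selection is bounded, which already gives the boundedness of $U$: if $U$ were unbounded, one could choose a selection, possibly repeating indices, that is unbounded.
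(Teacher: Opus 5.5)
Your main argument is correct and is essentially the paper's proof: take scalars in $\mathcal{S}(X_n)$ escaping to infinity, use $\mathcal{E}(X_n-C_n)=\mathcal{D}(X_n)\le b$, and contradict Proposition~\ref{prop:lemma21} with $b_n=b$. The paper avoids your subsequence extraction by choosing a near-extremal $C_n\in\mathcal{S}(X_n)$ for every $n$, which is unbounded because each $\mathcal{S}(X_n)$ is bounded; your closing ``alternative'' is unnecessary, and its ``repeating indices'' variant is muddled, since infinitely repeated indices would lose the convergence of the random variables.
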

\begin{proof}
	Because ${\cal S}(X_n)$ is bounded for every $n$ by item (e) of Theorem \ref{Quadrangle Theorem}, unboundedness of $U$ would imply the existence of an unbounded sequence $\left\{C_n\right\}_{n=1}^\infty$ such that $C_n \in {\cal S}(X_n)$ for all $n$. But then ${\cal E}(X_n-C_n)={\cal D}(X_n) \leq b$. Applying Proposition \ref{prop:lemma21} with $b_n=b$, we obtain that sequence $\left\{C_n\right\}_{n=1}^\infty$ is bounded, which is a contradiction.
\end{proof}

Given a set of regular quadrangles $(\cE_i, \cV_i, \cD_i, \cR_i), \ i = 1,\ldots, r$, one can construct a new quadrangle by mixing, scaling, or reverting operations (see \cite{MR3103448}). The following theorem justifies the application of a mixing operation to subregular quadrangles.

\begin{theorem}[Mixing Theorem]\label{Mixing Theorem}
For $k=1,\ldots,r$ let $(\cR_k,\cD_k,\cV_k,\cE_k)$ be a subregular quadrangle 
quartet with statistic $\cS_k$, and consider any weights $\lambda_k>0$ with
$\lambda_1+\cdots+\lambda_r=1$.   Then a quartet 
$(\cR,\cD,\cV,\cE)$ with statistic $\cS$ given by
$$\eqalign{
	\cS(X) =\, \lambda_1\cS_1(X) +\cdots+ \lambda_r\cS_r(X), \cr
	\cR(X) = \lambda_1\cR_1(X) +\cdots+ \lambda_r\cR_r(X), \cr
	\cD(X) = \lambda_1\cD_1(X) +\cdots+ \lambda_r\cD_r(X), \cr
	{\displaystyle
		\cV(X) = \min_{C_1,\ldots,C_r}\!
		\Lset \sumn_{k=1}^r \lambda_k \cV_k(X-C_k)\Mset
		\sumn_{k=1}^r \lambda_k C_k=0\Rset, }\cr  
	{\displaystyle
		\cE(X) = \min_{C_1,\ldots,C_r}\!
		\Lset \sumn_{k=1}^r \lambda_k \cE_k(X-C_k)\Mset
		\sumn_{k=1}^r \lambda_k C_k=0\Rset.} }
$$
is a subregular quadrangle.
Moreover $(\cR,\cD,\cV,\cE)$ is monotone if every
$(\cR_k,\cD_k,\cV_k,\cE_k)$ is monotone,
and $(\cR,\cD,\cV,\cE)$ is positively homogeneous if every
$(\cR_k,\cD_k,\cV_k,\cE_k)$ is positively homogeneous.
\end{theorem}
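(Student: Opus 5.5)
The plan is to verify the axioms for $\cE$ (and hence $\cV$, by the Quadrangle Theorem (b)). Then we establish (Q1) with the claimed $\cD$ and statistic $\cS$. The remaining relations follow from (Q3) and the Quadrangle Theorem.

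\emph{Reduction of (Q1) to a separable problem.} For fixed $X$, substitute $C'_k=C+C_k$ in the formula for $\cE(X-C)$, so that $\sum_k\lambda_k C'_k=C$. Then
$$\inf_C \cE(X-C)=\inf_{C'_1,\ldots,C'_r}\sum_{k}\lambda_k\cE_k(X-C'_k)=\sum_k\lambda_k\cD_k(X)=\cD(X).$$
The middle expression is unconstrained and separable. By the Quadrangle Theorem (e), each infimum is attained on the bounded interval $\cS_k(X)$, and the joint minimizers are $\prod_k\cS_k(X)$. Hence the argmin over $C$ is exactly $\sum_k\lambda_k\cS_k(X)=\cS(X)$, a nonempty closed bounded interval. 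The regret formula and its argmin follow in the same way, since $\cV=\cE+\bbe$ under the constraint $\sum\lambda_kC_k=0$. Mean-centering $\cR=\cD+\bbe$ is immediate from the individual $\cR_k=\cD_k+\bbe$.

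\emph{Axioms for $\cE$.} Convexity holds because $\cE$ is the inf-projection of the jointly convex function $(X,C)\mapsto\sum\lambda_k\cE_k(X-C_k)$ over an affine subspace. (E1) holds because $\cE\ge0$ and the choice $C_k=0$ gives $\cE(0)=0$. The min in the definition being attained, and closedness, are the main obstacle; I would handle them with Proposition \ref{prop:lemma21}. Take $X_n\to X$ and feasible $C^n=(C^n_1,\dots,C^n_r)$ with $\sum_k\lambda_k\cE_k(X_n-C^n_k)\le b_n\to b$. Since each term is nonnegative, $\cE_k(X_n-C^n_k)\le b_n/\lambda_k$. Proposition \ref{prop:lemma21} then shows that each sequence $C^n_k$ is bounded. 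Pass to a convergent subsequence with limit $C^0$, which is still feasible. Lower semicontinuity of each $\cE_k$ gives $\sum\lambda_k\cE_k(X-C^0_k)\le b$. With $X_n=X$ this yields attainment, and in general it yields closedness of the level sets.

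(E2) is the delicate point. Fix $X\neq0$ and suppose that $\cE(\mu X)=0$ for all $\mu>0$. Then for each $\mu$ there are feasible $C^\mu$ with $\cE_k(\mu X-C^\mu_k)=0$ for every $k$. Scale by $1/\mu$ and use ${\cal E}(tY)\le t{\cal E}(Y)$ for $t\in[0,1]$ to get $\cE_k(X-C^\mu_k/\mu)=0$ for $\mu\ge1$. By Proposition \ref{prop:lemma21}, the points $c^\mu_k=C^\mu_k/\mu$ are bounded, so a subsequence converges to some $c_k$ with $\sum\lambda_kc_k=0$. Along this subsequence, $\cE_k(\mu(X-c^\mu_k))=0$ for all large $\mu$.

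If $X$ is nonconstant, I would reuse the compactness argument from part (d) of the Quadrangle Theorem. The function $f(u_1X-u_2)$ is bounded over unit vectors $u$, so $\cE_k(\mu(X-c))>0$ for all $c$ once $\mu>B^{(k)}_X$. This is a contradiction.

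If $X\equiv a\neq0$ is constant, then $a-c_k$ cannot vanish for all $k$, because $\sum\lambda_k(a-c_k)=a\neq0$. Pick $k$ with $a-c_k\neq0$. Then $a-c^\mu_k$ stays bounded away from $0$ with a fixed sign, and (E2') for $\cE_k$ forces $\cE_k(\mu(a-c^\mu_k))>0$ for large $\mu$, again a contradiction.

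Subregularity of $\cD,\cR,\cV$ then follows from the Quadrangle Theorem (b) and (d), together with the identity $\cD=\inf_C\cE(X-C)$ proved above.

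\emph{Monotonicity and positive homogeneity.} Monotonicity of $\cR$ is inherited from the sum. For $\cV$, if $X\le Y$ then any feasible $C$ gives $\cV_k(X-C_k)\le\cV_k(Y-C_k)$, so $\cV(X)\le\cV(Y)$. Positive homogeneity follows from the substitution $C_k\mapsto tC_k$, which preserves the constraint.
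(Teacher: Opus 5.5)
Your proof is correct. It reaches the conclusion by a somewhat different and more economical route than the paper's.

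\textbf{Attainment and closedness.} The paper fixes $X$ and writes $\cS_k(X)=[J_k^-(X),J_k^+(X)]$. It then uses a pairwise exchange argument (raising $C_i$ and lowering $C_j$ along the constraint) to show that every feasible point outside the compact set $M(X)=M^-(X)\cup M^+(X)$ can be strictly improved. Lower semicontinuity then needs Proposition~\ref{prop:statbounded} to bound $\bigcup_n M(X_n)$. You instead note that each summand satisfies $\cE_k(X_n-C_k^n)\le b_n/\lambda_k$ and apply Proposition~\ref{prop:lemma21} coordinatewise. That gives attainment and closedness in one step and makes both the $M(X)$ construction and Proposition~\ref{prop:statbounded} unnecessary. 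The paper's route does give extra structural information: every minimizing vector has all $C_k\ge J_k^-(X)$ or all $C_k\le J_k^+(X)$. Your argument simply does not need this.

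\textbf{Quadrangle relations.} The paper delegates (Q1)--(Q4) and the monotonicity and homogeneity claims to the original 2013 Mixing Theorem. Your substitution $C_k'=C+C_k$ turns $\inf_C\cE(X-C)$ into an unconstrained separable problem. This yields $\cD=\sum_k\lambda_k\cD_k$ and $\cS=\sum_k\lambda_k\cS_k$ directly, so your proof is self-contained. Identifying the argmin this way does rely on the attainment you prove afterwards, so that step must come first logically.

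\textbf{Axiom (E2).} Here the two arguments are essentially equivalent. For nonconstant $X$, the paper invokes (D2) for each $\cD_k$, rather than the compactness bound from inside the proof of part (d). Your subsequence machinery is not needed in that case. For constants, the paper sums the strict inequalities $K_k^-<K-C_k<K_k^+$ to get explicit constants $K^\pm=\sum_k\lambda_kK_k^\pm$. This is shorter than your limit argument via $c_k^\mu\to c_k$.
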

\begin{proof}
    To prove the theorem, it suffices to verify that the equation 
    \begin{equation}\label{eq:errformix}
		{\cal E}(X)=\inf_{\substack{C_1, \dots C_r\\
		\lambda_1 C_1 + \dots + \lambda_r C_r = 0}} 
	   \sum_{k=1}^r \lambda_k {\cal E}_k(X-C_k),
	\end{equation}
defines a subregular measure of error, and that if ${\cal E}(X)<\infty$, then the set of minimizers in \eqref{eq:errformix} is a non-empty bounded closed convex subset of ${\mathbb R}^r$. The rest of the proof follows from the original Mixing Theorem in \cite{MR3103448} in combination with Theorem \ref{Quadrangle Theorem}.

Fix any $X$ with ${\cal E}(X)<\infty$. Then Theorem \ref{Quadrangle Theorem} implies that for every $k = 1, \dots, r$ set ${\cal S}_k(X)=\arg\min_C {\cal E}_k(X-C)$ is a non-empty closed bounded interval in ${\mathbb R}$, which we will write as $[J_k^-(X), J_k^+(X)]$. Note that
	$$
		f_k(C) = {\cal E}_k(X-C)
	$$
	are convex functions of $C$ with minimum on ${\cal S}_k(X)$, hence they are strictly decreasing on $(-\infty, J_k^-(X))$ and strictly increasing on $(J_k^+(X), +\infty)$.  
	Let 
	$$
	M^-(X) = \left\{(C_1, \dots, C_r) \in {\mathbb R}^r \,:\, \sum_{k=1}^r \lambda_k C_k =0, \, C_k \geq J_k^-(X), \, k=1,\dots,r \right\},
	$$
	$$
	M^+(X) = \left\{(C_1, \dots, C_r) \in {\mathbb R}^r \,:\, \sum_{k=1}^r \lambda_k C_k =0, \, C_k \leq J_k^+(X), \, k=1,\dots,r \right\},
	$$
	and $M(X) = M^-(X) \cup M^+(X)$. We claim that for every $(C_1, \dots C_r) \not \in M(X)$ satisfying $\sum_{k=1}^r \lambda_k C_k =0$ there exists $(C'_1, \dots C'_r) \in M(X)$ such that
	\begin{equation}\label{eq:objdecr}
		\sum_{k=1}^r {\cal E}_k(X-C_k) > \sum_{k=1}^r {\cal E}_k(X-C'_k).
	\end{equation}
	Indeed, $(C_1, \dots C_r) \not \in M(X)$ implies that there exists indices $i$ and $j$ such that $C_i < J_i^-(X)$ and $C_j > J_j^+(X)$. 
	Let $\delta = \min\left(\frac{J_i^-(X)-C_i}{\lambda_j}, \frac{C_j-J_j^+(X)}{\lambda_i}\right)$, and let $C'_i=C_i+\delta\lambda_j$, $C'_j=C_j-\delta\lambda_i$, and $C'_k=C_k$ for $k\not\in\{i,j\}$. Then 
	$$
	\sum_{k=1}^r \lambda_k C'_k = \lambda_i(\delta\lambda_j)+\lambda_j(-\delta\lambda_i)+\sum_{k=1}^r \lambda_k C_k=0
	$$
	and
	$$
		\sum_{k=1}^r {\cal E}_k(X-C'_k) - \sum_{k=1}^r {\cal E}_k(X-C_k) = (f_i(C'_i) - f_i(C_i)) + (f_j(C'_j) - f_j(C_j)).
	$$
	Because $J_i^-(X) \geq C'_i > C_i$ and $f_i$ is strictly decreasing on $(-\infty, J_i^-(X))$, we have $f_i(C'_i)<f_i(C_i)$. Similarly, $J_j^+(X) \leq C'_j < C_j$ implies that $f_j(C'_j)<f_j(C_j)$. This proves \eqref{eq:objdecr}. If $(C'_1, \dots C'_r) \in M(X)$, the claim is proved. Otherwise, we can repeat this process. At every step, the number of coordinates $C_k$ lying outside of the interval ${\cal S}_k(X)$ decreases. Hence, after a finite number of steps, we obtain $(C'_1, \dots C'_r) \in M(X)$, and the claim follows.
	
	The claim implies that \eqref{eq:errformix} is equivalent to
	\begin{equation}\label{eq:errformix2}
		{\cal E}(X)=\inf_{(C_1, \dots, C_r) \in M(X)} \sum_{k=1}^r
		\lambda_k {\cal E}_k(X-C_k).
	\end{equation}
 	Because $M(X)$ is a compact subset of ${\mathbb R}^r$, and the objective function is convex and lower-semicontinuous, it follows that the infimum in \eqref{eq:errformix2} is attained, and the set of minimizers is a non-empty, bounded, closed convex set.
 	
 	We next prove that ${\cal E}(\cdot)$ is a subregular error measure. Its convexity and property (E1) are obvious corollaries from the corresponding properties of ${\cal E}_k$, so we only need to prove (E2) and lower-semicontinuity. We start with (E2). Fix any non-zero $X \in {\cal L}^p$. First, assume that $X$ is non-constant. 
 	By Theorem \ref{Quadrangle Theorem} functionals 
 	$$
 		{\cal D}_k(X)=\inf_C {\cal E}_k(X-C)
 	$$
 	are subregular deviation measures, hence, by (D2), there exist positive constants $\mu_k$, $k=1,\dots, r$, such that ${\cal D}_k(\mu_k X)>0$. Then convexity of ${\cal D}_k$ together with ${\cal D}_k(0)=0$ imply that ${\cal D}_k(\mu X)>0$ for $\mu=\max_{1\leq k\leq r} \mu_k$, or equivalently, ${\cal E}_k(\mu X-C_k)>0$ for all $k$ and all constants $C_k$. But this implies that $\sum_{k=1}^r \lambda_k {\cal E}_k(\mu X-C_k) > 0$ for any positive constants $\lambda_k$. Because infimum in \eqref{eq:errformix} is always attained, this implies that ${\cal E}(\mu X)>0$ and proves (E2) for non-constant $X$.   
 	
 	If $X$ is a constant, it is sufficient to consider cases $X=\pm 1$, that is, prove property (E2') formulated in the proof of Proposition \ref{prop:lemma21}. 	
 	By property (E2') for each ${\cal E}_k$, there exist constants $K^-_k < 0 < K^+_k$ such that ${\cal E}_k(K^-_k)>0$ and ${\cal E}_k(K^+_k)>0$. If $K \in {\mathbb R}$ is any constant such that ${\cal E}(K)=0$, then there exist constants $(C_1, \dots, C_r)$ such that
 	$$
 		\sum_{k=1}^r \lambda_k C_k = 0 \quad \text{and} \quad \sum_{k=1}^r \lambda_k {\cal E}_k(K-C_k) = 0.
 	$$
 	Because all $\lambda_k>0$, this is possible only if ${\cal E}_k(K-C_k)=0$ for all $k$. But then
 	$$
 		K^-_k < K-C_k < K^+_k, \quad k=1,\dots, r.
 	$$
 	Multiplying these inequalities by $\lambda_k$ and adding, we obtain
 	$$
 		\sum_{k=1}^r \lambda_k K^-_k < K\sum_{k=1}^r \lambda_k - \sum_{k=1}^r \lambda_k C_k = K < \sum_{k=1}^r \lambda_k K^+_k  
 	$$
 	This implies that (E2') for ${\cal E}(\cdot)$ holds with constants $K^-=\sum_{k=1}^r \lambda_k K^-_k < 0$ and $K^+=\sum_{k=1}^r \lambda_k K^+_k > 0$. 
 	
 	
 	Now let us prove lower-semicontinuity of ${\cal E}(\cdot)$. We need to prove that for any sequence $X_n$ converging to $X$, we have $\liminf\limits_{n\to \infty} {\cal E}(X_n) \geq {\cal E}(X)$. By passing to a subsequence if necessary, we may assume that $L=\lim\limits_{n\to \infty} {\cal E}(X_n)$ exists. If $L=\infty$, the inequality $L \geq {\cal E}(X)$ is trivial, so we may assume that $L<\infty$. Then there is a constant $c$ such that ${\cal E}(X_n) \leq c$ for all $n$.
 	
 	Because the infimum in \eqref{eq:errformix} is attained, there exist vectors ${\bf C}_n=(C_{n1}, \dots, C_{nr})$ such that 
 	$$
 		\sum_{k=1}^r \lambda_k C_{nk} = 0 \quad \text{and} \quad {\cal E}(X_n)=\sum_{k=1}^r \lambda_k {\cal E}_k(X_n-C_{nk}), \quad n=1,2,\dots
 	$$  
 	We have proved above that ${\bf C}_n \in M(X_n)$ for all $n$. 
 	
 	Inequality $c \geq {\cal E}(X_n)$ implies that 
 	$$
 		b:= \frac{c}{\min\{\lambda_1, \dots, \lambda_r\}} \geq {\cal E}_k(X_n-C_{nk}) \geq \min_C {\cal E}_k(X_n-C), \quad k=1,\dots, r.
 	$$
 	Hence, by Proposition \ref{prop:statbounded},
 	$$
 		U_k = \bigcup_{n=1}^\infty {\cal S}_k(X_n), \quad k=1,\dots,r
 	$$
 	are bounded sets. This implies that the set
 	$$
 		M^* = \bigcup_{n=1}^\infty M(X_n)
 	$$
 	is a bounded subset of ${\mathbb R}^r$. Because ${\bf C}_n \in M^*$ for all $n$, we may assume, after passing to a subsequence if necessary, that sequence ${\bf C}_n$ converges component-wise to a vector ${\bf C}=(C_1, \dots, C_r)$. Then $\sum_{k=1}^r \lambda_k C_k = 0$. Also, for each $k$, the sequence $X_n-C_{nk}$ converges to $X-C_k$ in $\cL^p$. By lower-semicontinuity of ${\cal E}_k$, this implies that
 	$$
 	{\cal E}_k(X-C_k) \leq \liminf\limits_{n \to \infty} {\cal E}_k(X_n-C_{nk}).
 	$$
 	Then
 	$$
 	{\cal E}(X) \leq \sum_{k=1}^r \lambda_k {\cal E}_k(X-C_k) \leq \sum_{k=1}^r \lambda_k \liminf\limits_{n \to \infty} {\cal E}_k(X_n-C_{nk}) 
 	$$
 	$$
 	\leq \liminf\limits_{n \to \infty} \sum_{k=1}^r \lambda_k {\cal E}_k(X_n-C_{nk}) = \liminf\limits_{n \to \infty} {\cal E}(X_n) = L,
 	$$
    which completes the proof.
\end{proof}
 Note that the scaling and reverting theorems from \cite{MR3103448} hold for the subregular risk quadrangles. The proofs are similar to those of the mixing theorem, with an appropriate change of variables. Below, we provide the formulations of these theorems for completeness.
 
 \begin{theorem}[Reverting Theorem]\label{Reverting Theorem}
For $i=1,2$, let $(\cR_i,\cD_i,\cV_i,\cE_i)$ be a subregular quadrangle quartet 
with statistic $\cS_i$.  Then a quartet $(\cR,\cD,\cV,\cE)$ 
with statistic $\cS$ given by
$$\eqalign{
	\cS(X) = \half[\cS_1(X)-\cS_2(-X)], \cr
	\cR(X) = \bbe X+\half[\cR_1(X)+\cR_2(-X)], \cr
	\cD(X) = \half[\cD_1(X)+\cD_2(-X)] = \half[\cR_1(X)+\cR_2(-X)], \cr
	\cV(X) = \bbe X+{\displaystyle \min_C}\Lset
	\half[\cV_1(C+X)+\cV_2(C-X)]-C\Rset, \cr 
	\cE(X) = {\displaystyle \min_C}\Lset\half[\cE_1(C+X) +\cE_2(C-X)]\Rset. }
$$
is a subregular quadrangle.
Positive homogeneity is preserved in this construction, but not
monotonicity.
\end{theorem}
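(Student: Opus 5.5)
Following the strategy used for the Mixing Theorem, I will reduce everything to a statement about the error functional alone. The task is to show that
\[
\cE(X)=\inf_C\Lset \half[\cE_1(C+X)+\cE_2(C-X)]\Rset
\]
defines a subregular error measure. I also need that, whenever $\cE(X)<\infty$, the set of minimizing $C$ is a nonempty bounded closed interval. Once that is done, the formulas for $\cV,\cR,\cD,\cS$ and the relations (Q1)--(Q4) follow from the original Reverting Theorem in \cite{MR3103448} combined with Theorem~\ref{Quadrangle Theorem}; those algebraic manipulations do not use strict positivity. Positive homogeneity passes through the infimum unchanged, since $C$ can be rescaled.

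\textbf{Attainment and boundedness.} Convexity of $\cE$ and property (E1) are immediate. Joint convexity of $(X,C)\mapsto \half[\cE_1(C+X)+\cE_2(C-X)]$ gives convexity after minimizing over $C$, and $\cE(0)=0$ follows by taking $C=0$.

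For attainment, take a minimizing sequence $C_n$ with $\half[\cE_1(C_n+X)+\cE_2(C_n-X)]\to\cE(X)$. Then $\cE_1(X-(-C_n))\le b_n:=2\cdot(\text{objective at }C_n)$, which is a convergent sequence. Proposition~\ref{prop:lemma21} applied to $\cE_1$ with the constants $-C_n$ shows that $C_n$ is bounded. At an accumulation point, lower semicontinuity of $\cE_1$ and $\cE_2$ gives attainment. The minimizer set is then closed, convex and, by the same proposition, bounded.

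\textbf{Lower semicontinuity.} I will copy the Mixing Theorem argument. Take $X_n\to X$ with $\cE(X_n)\le c$, and choose minimizers $C_n$. Then $\cE_1(X_n+C_n)\le 2c$, so Proposition~\ref{prop:lemma21} gives boundedness of $C_n$. Passing to a convergent subsequence, closedness of $\cE_1$ and $\cE_2$ yields $\cE(X)\le\liminf\cE(X_n)$. Note that this step is simpler than in the Mixing Theorem, because only one scalar is optimized.

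\textbf{(E2), the main obstacle.} Take $X\ne0$ and suppose, for contradiction, that $\cE(\lambda X)=0$ for all $\lambda>0$. Then for each $\lambda$ there is a $C_\lambda$ with
\[
\cE_1(C_\lambda+\lambda X)=0 \quad\text{and}\quad \cE_2(C_\lambda-\lambda X)=0.
\]

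\emph{Case 1: $X$ is non-constant.} Here I will use the uniform bound from the proof of part (d) of Theorem~\ref{Quadrangle Theorem}. That argument shows there is a $B_X$ such that $\cE_1(\lambda X-C)>0$ for all $\lambda>B_X$ and all $C$. Taking $\lambda>B_X$ with constant $-C_\lambda$ contradicts $\cE_1(C_\lambda+\lambda X)=0$. Equivalently, this case follows from (D2) for $\cD_1$ combined with convexity and $\cD_1(0)=0$, which propagate positivity to all larger multiples, exactly as in the Mixing proof.

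\emph{Case 2: $X$ is a nonzero constant $K$.} Use (E2') for $\cE_1$ and $\cE_2$. The zero sets of $\cE_1$ and $\cE_2$ on constants are contained in bounded intervals $(K_i^-,K_i^+)$. Hence $C+\lambda K\in(K_1^-,K_1^+)$ and $C-\lambda K\in(K_2^-,K_2^+)$. Subtracting these gives
\[
2\lambda K\in (K_1^- - K_2^+,\;K_1^+ - K_2^-),
\]
which is a bounded interval, so it fails for large $\lambda$. This gives (E2) in this case as well.

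The subtle point in the whole proof is that Case 1 needs uniformity over all shifts $C$. That uniformity is exactly what the compactness argument in part (d) supplies, and I will invoke it directly rather than reprove it.
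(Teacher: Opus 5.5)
Your proposal is correct and follows the paper's route. The paper gives no separate proof: it only says the argument mirrors the Mixing Theorem after a change of variables, and that is what you carry out — subregularity, attainment and a bounded argmin for the reverted $\cE$, then the algebra of the original theorem via Theorem~\ref{Quadrangle Theorem}. Indeed, with $\tilde\cE_2(X):=\cE_2(-X)$ and $(C_1,C_2)=(-C,C)$, the reverted error is exactly the $r=2$, $\lambda_1=\lambda_2=1/2$ mixture of $\cE_1$ and $\tilde\cE_2$, so your checks are the proof of Theorem~\ref{Mixing Theorem} specialized to this case.
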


\begin{theorem}[Scaling Theorem]\label{Scaling Theorem}
Let $(\cR_0,\cD_0,\cV_0,\cE_0)$ be a subregular quadrangle quartet with
statistic $\cS_0$ and consider any $\lambda\in(0,\infty)$. Then a  quartet $(\cR,\cD,\cV,\cE)$ with statistic $\cS$ given by
\begin{equation}\label{eq:3.22}
\begin{aligned}
	\cS(X) &= \cS_0(X), \\
	\cR(X) &= (1-\lambda)\mathbb{E}X + \lambda \cR_0(X), \ 
	\cD(X) = \lambda \cD_0(X),\\
	\cV(X) &= (1-\lambda)\mathbb{E}X + \lambda \cV_0(X), \ 
	\cE(X) = \lambda \cE_0(X),
\end{aligned}
\end{equation}
or alternatively by
$$\eqalign{
\cS(X)=\lambda\cS_0(\lambda^{-1}X), \cr
	\cR(X) = \lambda \cR_0(\lambda^{-1}X),\quad
	\cD(X) = \lambda \cD_0(\lambda^{-1}X),\cr
	\cV(X) = \lambda \cV_0(\lambda^{-1}X),\quad
	\cE(X) = \lambda \cE_0(\lambda^{-1}X). }
$$
is a subregular 
quadrangle.
Monotonicity and positive homogeneity are preserved in these constructions,
except that monotonicity requires $\lambda\leq 1$ in \eqref{eq:3.22}.
\end{theorem}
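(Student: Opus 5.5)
The plan is to reduce everything to the error functional and then invoke the Quadrangle Theorem~\ref{Quadrangle Theorem}. For both versions I will check that $\cE$ is a subregular error measure. The remaining functionals are then fixed by (Q1)--(Q3): $\cV=\cE+\bbe$ by Theorem~\ref{Quadrangle Theorem}(b), $\cD$ by \eqref{eq:devfromerr} via part (d), and $\cR=\cD+\bbe$ by part (a). The statistic $\cS$ is the argmin set from part (e). So the work splits into two parts. First, verify (E1), (E2), convexity and closedness of $\cE$. Second, show that the formulas in the statement for $\cS,\cR,\cD,\cV$ agree with the ones the Quadrangle Theorem produces.

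\emph{First version, \eqref{eq:3.22}.} With $\cE=\lambda\cE_0$ and $\lambda>0$, convexity, closedness, $\cE(0)=0$ and $\cE\ge 0$ carry over directly. For (E2), $\cE(\mu X)=\lambda\cE_0(\mu X)>0$ whenever $\cE_0(\mu X)>0$, so the same $\mu$ works. Minimizing over $C$ gives $\min_C\cE(X-C)=\lambda\cD_0(X)$ with the same argmin set, hence $\cS=\cS_0$. Then $\cR=\bbe X+\lambda\cD_0(X)=(1-\lambda)\bbe X+\lambda\cR_0(X)$, and $\cV$ is obtained the same way.

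For preserved properties, positive homogeneity is immediate. For monotonicity, write $\cR=(1-\lambda)\bbe+\lambda\cR_0$. This is a nonnegative combination of monotone functionals exactly when $\lambda\le1$, and the same holds for $\cV$.

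\emph{Second version, perspective scaling $\cE(X)=\lambda\cE_0(\lambda^{-1}X)$.} Here $X\mapsto\lambda^{-1}X$ is a linear homeomorphism of $\cL^p$. Therefore convexity, closedness and (E1) transfer. For (E2), given $X\ne0$, pick $\mu_0$ with $\cE_0(\mu_0X)>0$ and set $\mu=\lambda\mu_0$; then $\cE(\mu X)=\lambda\cE_0(\mu_0 X)>0$.

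Next, substitute $C=\lambda C'$:
$$\min_C\cE(X-C)=\lambda\min_{C'}\cE_0(\lambda^{-1}X-C')=\lambda\cD_0(\lambda^{-1}X),$$
with argmin $\lambda\cS_0(\lambda^{-1}X)$. Since $\bbe$ is linear, $\cV=\cE+\bbe$ equals $\lambda\cV_0(\lambda^{-1}X)$, and likewise $\cR=\lambda\cR_0(\lambda^{-1}X)$.

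Monotonicity transfers because $X\le Y$ implies $\lambda^{-1}X\le\lambda^{-1}Y$. Positive homogeneity transfers because $\lambda\cF_0(\lambda^{-1}tX)=t\lambda\cF_0(\lambda^{-1}X)$.

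The only point needing care is (E2) in the second version, where the quantifier "there exists $\lambda>0$" interacts with the rescaling. As shown, this is handled by absorbing the factor into $\mu$. Subregularity of the derived $\cD,\cR,\cV$ then comes for free from Theorem~\ref{Quadrangle Theorem}, so no analogue of the compactness argument used in the Mixing Theorem is needed.
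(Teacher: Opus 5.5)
Your proposal is correct and takes essentially the paper's route. The paper gives no details beyond saying the proof parallels the Mixing Theorem: show the new $\cE$ is a subregular error, then invoke Theorem~\ref{Quadrangle Theorem}. Your check of (E1)--(E2) under both rescalings, together with the substitution $C=\lambda C'$ that identifies $\cS$ as $\cS_0(X)$ or $\lambda\cS_0(\lambda^{-1}X)$, is exactly that argument made explicit, and you are right that no compactness step is needed here.
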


In some examples in Section \ref{sec:examples}, the error and regret measures are given by
\begin{equation}\label{eq:experror}
	\cE(X)={\mathbb E}[e(X)]
\end{equation} 
and
\begin{equation}\label{eq:expregret}
	\cV(X)={\mathbb E}[v(X)]
\end{equation}
for some functions $e:{\mathbb R}\to[0,\infty]$ and $v:{\mathbb R}\to(-\infty,\infty]$. Then $\cE$ and $\cV$ are related by (Q3) if and only if $e$ and $v$ are related by
\begin{equation}\label{eq:evrel}
	e(x)=v(x)-x, \quad\quad v(x)=x+e(x).
\end{equation} 

\begin{theorem}[Expectation Theorem]\label{Expectation Theorem}
	For functions $e:{\mathbb R}\to[0,\infty]$ and $v:{\mathbb R}\to(-\infty,\infty]$ related by \eqref{eq:evrel}, the properties
    \begin{equation}\label{eq:eprop}
    \begin{aligned}
    &e \text{ is closed convex, }\\
    &e(0)=0, \quad e(x)\geq 0 \text{ for all } x,\\
    &e(a)>0, e(b)>0 \text{for some } a<0<b    
    \end{aligned}
	\end{equation} amount to 
    \begin{equation}\label{eq:vprop}
    \begin{aligned}
       &v \text{ is closed convex, }\\ 
       &v(0)=0, \quad v(x)\geq x \text{for all } x,\\
       &v(a)>a, v(b)>b \text{ for some } a<0<b
    \end{aligned}
	\end{equation}  and ensure that the functionals
	$$
	\cV(X)=\bbe[v(X)], \qquad  \cE(X)=\bbe[e(X)],
	$$
	form a corresponding pair consisting of a subregular measure of regret and a subregular measure of error. 
	For $X\in V=\dom\cV=\dom\cE$ let $C^\plus(X)=\sup\lset C \mset X-C\in V\rset$ 
	and $C^\minus(X)=\inf\lset C \mset X-C\in V\rset$.  The associated statistic 
	$\cS$ in the quadrangle generated from $\cV$ and $\cE$ is characterized 
	then by 
    \begin{align*}
        \cS(X)&=\Lset C\Mset \bbe[e'_\minus(X-C)]\leq 0\leq \bbe[e'_\plus(X-C)]\Rset\\ 
	&=\Lset C\Mset \bbe[v'_\minus(X-C)]\leq 1\leq \bbe[v'_\plus(X-C)]\Rset 
    \end{align*}
	subject to the modification that, in both cases, the right side is replaced 
	by $\infty$ if $C\leq C^\minus(X)$ and the left side is replaced by
	$-\infty$ if $C\geq C^\plus(X)$.  The quadrangle is completed then by 
	setting
	$$
	\cD(X)=\bbe[e(X-C)] \,\text{and}\, \cR(X) = C+\bbe[v(X-C)] \,\text{for any/all} C\in \cS(X).
	$$
	Having $\cV$ and $\cR$ be monotone corresponds (in tandem with convexity) to having $v(x)\leq 0$ when $x<0$, or equivalently $e(x)\leq |x|$ when $x<0$.  Positive homogeneity holds in the quadrangle if and only if $v$ and $e$ have graphs composed of two 
	linear pieces linked at $0$. 
\end{theorem}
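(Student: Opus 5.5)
The plan is to reduce everything to pointwise statements about $e$ and $v$, and then transfer them to $\cE$ and $\cV$ by integration. The equivalence of \eqref{eq:eprop} and \eqref{eq:vprop} is immediate from $v(x)=x+e(x)$. Adding a linear function preserves closed convexity, $v(0)=e(0)$, $v(x)\ge x\iff e(x)\ge 0$, and $v(a)>a\iff e(a)>0$.

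\emph{Step 1: $\cE$ is a subregular error.} Convexity of $\cE$ on $\cL^p$ follows pointwise from convexity of $e$. For closedness, take $X_n\to X$ in $\cL^p$ and pass to a subsequence that realizes $\liminf \cE(X_n)$ and converges a.s. Lower semicontinuity of $e$ and $e\ge 0$ then let Fatou's lemma give $\bbe[e(X)]\le\liminf\bbe[e(X_n)]$. (E1) is immediate. For (E2), fix $X\neq 0$. Then $\bbp(X>0)>0$ or $\bbp(X<0)>0$; say the former, and choose $\delta>0$ with $\bbp(X>\delta)>0$. Since $e$ is convex with $e(0)=0$, $e\ge0$ and $e(b)>0$, we get $e(x)>0$ for all $x\ge b$. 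Taking $\lambda=b/\delta$ gives $e(\lambda X)>0$ on $\{X>\delta\}$, so $\cE(\lambda X)>0$. The case $X<0$ is symmetric, using $a$. Since $\cV(X)=\bbe X+\cE(X)$, Theorem~\ref{Quadrangle Theorem}(b) then makes $\cV$ a subregular regret.

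\emph{Step 2: the statistic.} By Theorem~\ref{Quadrangle Theorem}(e), $\cS(X)$ is a nonempty compact interval, and it equals $\argmin_C \varphi(C)$ with $\varphi(C)=\bbe[e(X-C)]$, whenever $\cE(X)<\infty$. The function $\varphi$ is convex and finite on an interval with endpoints $C^\minus(X),C^\plus(X)$, and $C$ minimizes it iff $\varphi'_\minus(C)\le 0\le\varphi'_\plus(C)$. Differentiating under the expectation is where the work lies. The difference quotients $(e(X-C-t)-e(X-C))/t$ are monotone in $t$ by convexity, so monotone convergence (from a quotient at a fixed $t$ that is integrable inside the domain) gives
\[
\varphi'_\plus(C)=-\bbe[e'_\minus(X-C)],\qquad \varphi'_\minus(C)=-\bbe[e'_\plus(X-C)].
\]
This turns the optimality condition into the displayed characterization. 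At the boundary of the domain, the one-sided derivative of $\varphi$ in the outward direction is $+\infty$ by convention, which is exactly the stated modification. The $v$-form follows from $v'_\pm=1+e'_\pm$. The formulas for $\cD$ and $\cR$ then come from (Q1) and (Q2) via Theorem~\ref{Quadrangle Theorem}.

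\emph{Step 3: monotonicity and homogeneity.} By \eqref{3.17}, $\cV$ is monotone iff $\cE(X)\le|\bbe X|$ for $X\le0$. Testing on constants $X=x<0$ gives $e(x)\le|x|$, and conversely $e(x)\le|x|=-x$ for $x<0$ integrates to the condition. The equivalent form $v(x)\le 0$ for $x<0$ is immediate. Monotonicity of $\cR$ then follows from Theorem~\ref{Quadrangle Theorem}(c). For positive homogeneity, note that $\cE(\lambda X)=\lambda\cE(X)$ for all $X$ is equivalent, on constants, to $e(\lambda x)=\lambda e(x)$. This means $e$ is linear on each half-line, and the same then holds for $v$. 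The converse is clear.

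The main obstacle I expect is the interchange of derivative and expectation in Step 2 near the endpoints $C^\pm(X)$, where integrability of the difference quotients can fail and the derivatives may be infinite. I would handle this with monotone convergence on one side together with the $\pm\infty$ conventions already built into the statement.
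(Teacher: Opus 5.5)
Your proposal is correct, and for the part the paper actually proves it follows the same route. Convexity, closedness and (E1) are read off pointwise from $e$, and (E2) comes from a set of positive probability on which $|X|$ is bounded away from $0$. The paper takes $\{|X|\ge\epsilon\}$ with $\lambda=\max(|a|,|b|)/\epsilon$ to treat both signs at once, and leaves tacit the Fatou argument and the estimate $e(x)\ge (x/b)\,e(b)>0$ for $x\ge b$, both of which you spell out. The paper also shows necessity of \eqref{eq:eprop} by testing $\cE$ on constants. However, it gives no proof of the statistic, monotonicity or homogeneity claims, implicitly inheriting them from the original Expectation Theorem in \cite{MR3103448}, so your Steps 2 and 3 supply what it omits.
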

\begin{proof}
	Assume that \eqref{eq:experror} is a subregular error measure. For any constant $x$, \eqref{eq:experror} implies that $\cE(x)={\mathbb E}[e(x)]=e(x)$. Hence, convexity and closedness of $\cE$ imply the corresponding properties of $e$, axiom (E1) in Definition \ref{def:error} implies that $e(0)=0$ and $e(x)\geq 0$ for all $x \in {\mathbb R}$, while axiom (E2) implies that for any $x\neq 0$ there exists $\lambda>0$ such that $e(\lambda x)>0$. Applying this property to $x=-1$ and $x=1$, we deduce that $e(a)>0$ and $e(b)>0$ for some $a<0<b$.
	
	Conversely, assume that functional $\cE(X)$ is given by \eqref{eq:experror} for some function $e:{\mathbb R}\to[0,\infty]$ satisfying \eqref{eq:eprop}. Then convexity and closedness of $\cE$ follow from the corresponding properties of $e$ and the linearity of expectation. Further, $\cE(0)={\mathbb E}[e(0)]=e(0)=0$, and, for any r.v. $X$, $\cE(X)={\mathbb E}[e(X)]\geq 0$. It is left to prove (E2). For any non-zero $X \in \cL^p$ there exists $\epsilon>0$ such that ${\mathbb P}(A)>0$, where $A=\{\omega \in \Omega: |X|\geq \epsilon\}$. Let $\lambda=\frac{\max(|a|,|b|)}{\epsilon}$. 
	Then for every $\omega \in A$ we have either $\lambda X(\omega) \leq a$ or $\lambda X(\omega) \geq b$, which by \eqref{eq:eprop} implies that $e(\lambda X(\omega))>0$. 
	Then 
	$$
		\cE(\lambda X)={\mathbb E}[e(\lambda X)] = \int_\Omega e(\lambda X(\omega))d\bbp \geq \int_A e(\lambda X(\omega))d\bbp > 0.
	$$
	The regret part follows from the error part together with relations \eqref{eq:evrel}.
\end{proof}


 Any quadrangle generated from an error measure of the form \eqref{eq:experror} will be called an \emph{expectation quadrangle}. The Quantile Symmetric Average Union Quadrangle in Example \ref{ex:qsauq} is an example of the expectation quadrangle that is not regular.

\paragraph{Construction of quadrangles.}

Any subregular error measure uniquely defines a quadrangle: the corresponding deviation measure is uniquely defined via (Q1), and then the risk and regret are uniquely defined via (Q3). The same is true for any subregular regret measure. However, any given subregular deviation measure $\cD$ belongs to infinitely many subregular quadrangles. For example, for any $\lambda>0$, functional $\cE(X)=\cD(X)+\lambda |\bbe[X]|$ is a subregular error measure whose projected deviation measure is $\cD$. Similarly, any subregular risk measure $\cR$ belongs to infinitely many subregular quadrangles.

If the risk measure $\cR$ is subregular and coherent (in the general sense), then it is easy to check that 
\begin{equation}\label{eq:seminorm}
       \cE(X) :=\cR(|X|)
\end{equation}
is a subregular error measure, which can then be used to define a quadrangle. This how the quantile symmetric average quadrangle in Example \ref{ex:cvar norm} has been constructed, starting from risk measure $\cR(X) = \cvar_\alpha(X)$. Note, however, that the risk measure in that quadrangle is not $\cvar_\alpha(X)$. 


\subsubsection{Dual Representation and Conjugate Functionals}
\label{sec:dual}
Up to this point, we have formulated the quadrangle elements as \emph{primal} convex functionals on $\cL^p$ for arbitrary $p\in[1,\infty]$. 
To derive their \emph{dual} (or \emph{envelope}) representations and geometric characterizations, we now place the theory in a standard Fenchel--Legendre framework. Such representations often yield substantial insight into the structure of these primal functionals and provide practical tools for characterizing optimality.
Specifically, we restrict to $p\in[1,\infty)$ so that the topological dual of $\cL^p$ can be identified with $\cL^q$ via the pairing 
$\langle X,Q\rangle=\E[XQ]$ (with $q\in(1,\infty]$, $\frac1p+\frac1q=1$ when $p>1$, and $q=\infty$ when $p=1$). 
In the borderline case $q=\infty$, we work with the weak-$*$ topology on $\cL^\infty$ induced by $\cL^1$; this ensures that closedness and conjugacy statements below are understood in the appropriate topology and avoids introducing dual objects beyond $\cL^q$.

Within this paired-space setting, we introduce the convex conjugate $\cF^*$ of a proper closed convex functional $\cF$ and invoke the Fenchel--Moreau representation.
The main purpose is twofold: first, to translate the subregular axioms for error/regret/deviation/risk into verifiable conditions on their respective dual representations (cf. Proposition~\ref{prop:conjugate}); and second, in the positively homogeneous case, to obtain a support-function representation $\cF(X)=\sup_{Q\in\cQ}\E[QX]$ for each quadrangle element in terms of a closed convex envelope $\cQ\subseteq\cL^q$.
This envelope viewpoint leads to a transparent geometric interpretation of subregularity in terms of (relative) quasi-interior and separation by hyperplanes, which will be used in subsequent constructions.


In this framework of paired spaces, the conjugate to a proper closed convex 
functional ${\cal F}:\cL^p\to(-\infty,\infty]$ is the functional 
${\cal F}^*:\cL^q\to(-\infty,\infty]$ given by
\begin{equation}\label{eq:conj}
	\cF^*(Q) = \sup_{X \in \cL^p}(\E[XQ]-\cF(X)), \quad \forall Q \in \cL^q.
\end{equation} 
It is well-known that ${\cal F}^*$ is proper, closed, and convex, and
\begin{equation}\label{eq:conj2}
\cF(X) = \sup_{Q \in \cL^q}(\E[XQ]-\cF^*(Q)), \quad \forall X \in \cL^p.
\end{equation} 
Let
$$
\cQ = \left\{ Q \in \cL^q \,|\, \cF^*(Q) < \infty \right\} =
   \dom \cF^*
$$
be the effective domain of $\cF^*$.

\begin{proposition}\label{prop:conjugate}
	Let ${\cal F}:\cL^p\to(-\infty,\infty]$ be a closed proper 
convex functional, and let $\cF^*$ be its conjugate. Then
	\begin{itemize}
		\item[(i)] $\cF$ is a subregular error measure if and only if $\cF^*$ satisfies:
		\begin{itemize}
			\item[(E1*)] $\cF^*(Q)\geq \cF^*(0)=0$ for every $Q \in \cL^q$;
			\item[(E2*)] for any non-zero $X \in \cL^p$ there exists $Q \in \cQ$ such that $\E[XQ]>0$.
		\end{itemize} 
		\item[(ii)] $\cF$ is a subregular regret measure if and only if $\cF^*$ satisfies:
		\begin{itemize}
			\item[(V1*)] $\cF^*(Q)\geq \cF^*(1)=0$ for every $Q \in \cL^q$;
			\item[(V2*)] for any non-zero $X\in \cL^p$ there exists $Q \in \cQ$ such that $\E[XQ]>\E[X]$.
		\end{itemize} 
		\item[(iii)] $\cF$ is a subregular deviation measure if and only if $\cF^*$ satisfies:
		\begin{itemize}
			\item[(D1*)] $\cF^*(Q)\geq \cF^*(0)=0$ for every $Q \in \cL^q$;
			\item[(D2*)] $\E[Q]=0$ for every $Q \in \cQ$;
			\item[(D3*)] for any non-constant $X\in \cL^p$ there exists $Q \in \cQ$ such that $\E[XQ]>0$.
		\end{itemize}
		\item[(iv)] $\cF$ is a subregular risk measure if and only if $\cF^*$ satisfies:
		\begin{itemize}
			\item[(R1*)] $\cF^*(Q)\geq \cF^*(1)=0$ for every $Q \in \cL^q$;
			\item[(R2*)] $\E[Q]=1$ for every $Q \in \cQ$;
			\item[(R3*)] for any non-constant $X\in \cL^p$ there exists $Q \in \cQ$ such that $\E[XQ]>\E[X]$.
		\end{itemize}
	\end{itemize}
\end{proposition}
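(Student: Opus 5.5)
The plan is to prove (i) directly through the Fenchel--Moreau formula \eqref{eq:conj2}, and then to obtain (ii)--(iv) from (i) by elementary transformations of the conjugate. Throughout, $\cF$ is closed, proper and convex, so $\cF=\cF^{**}$; closedness and convexity are then automatic on both sides, and only the axioms (E1)--(E2), (V1)--(V2), (D1)--(D2), (R1)--(R2) need to be translated.

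\emph{Part (i), condition (E1).} First I will show that (E1) is equivalent to (E1*). If $\cF(0)=0$ and $\cF\ge 0$, then
\[
\cF^*(0)=\sup_X(-\cF(X))=-\inf_X\cF(X)=0,
\]
since the infimum is attained at $X=0$. Also $\cF^*(Q)\ge \E[0\cdot Q]-\cF(0)=0$ for every $Q$. Conversely, if (E1*) holds, then $\cF(X)=\sup_Q(\E[XQ]-\cF^*(Q))\ge -\cF^*(0)=0$, and $\cF(0)=\sup_Q(-\cF^*(Q))=0$.

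\emph{Part (i), condition (E2).} This is the main step. Assume (E1), and let $X\neq 0$. By \eqref{eq:conj2},
\[
\cF(\lambda X)=\sup_{Q\in\cQ}\big(\lambda\E[XQ]-\cF^*(Q)\big).
\]
If some $Q\in\cQ$ has $\E[XQ]>0$, then for $\lambda>\cF^*(Q)/\E[XQ]$ the expression inside the supremum is already positive, so $\cF(\lambda X)>0$. Conversely, if $\E[XQ]\le 0$ for all $Q\in\cQ$, then every term is at most $-\cF^*(Q)\le 0$, so $\cF(\lambda X)\le 0$ for all $\lambda>0$ and (E2) fails. Hence (E2) is equivalent to (E2*). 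Note that $\cF^*(Q)$ is finite for $Q\in\cQ$ and nonnegative by (E1*), so the threshold on $\lambda$ is a valid positive number.

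\emph{Part (ii).} Write $\cF(X)=\cG(X)+\E[X]$. Then $\cF$ is a subregular regret measure exactly when $\cG$ is a subregular error measure; this is Theorem~\ref{Quadrangle Theorem}(b), and it can be checked axiom by axiom since (V1)--(V2) are (E1)--(E2) shifted by $\E[X]$. The conjugates are related by $\cF^*(Q)=\cG^*(Q-1)$, and consequently $\dom\cF^*=\dom\cG^*+1$. Translating (E1*) and (E2*) through the substitution $Q\mapsto Q-1$ gives exactly (V1*) and (V2*), because $\E[X(Q-1)]>0$ is the same as $\E[XQ]>\E[X]$.

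\emph{Part (iii).} First, (D1) together with convexity forces $\cF(X+C)=\cF(X)$. Indeed, write
\[
X+C=\tfrac12(2X)+\tfrac12(2C), \qquad X=\tfrac12(2X+2C)+\tfrac12(-2C),
\]
and apply convexity with $\cF(2C)=\cF(-2C)=0$. This gives $\cF(X+C)\le \tfrac12\cF(2X)$ and $\cF(X)\le \tfrac12\cF(2X+2C)$. These bounds alone are not immediately symmetric, so the cleanest route is the following. Convexity along the line $t\mapsto X+tC$, combined with boundedness from below by $0$ and the identity $\cF(tC)=0$, lets me show that $\cF(X+\cdot\,C)$ is bounded on $\R$, so I get constancy from the standard fact that a convex function on $\R$ which is bounded above is constant.

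With translation invariance established, $\cF^*(Q)$ contains the term $\sup_C C\,\E[Q]$, which equals $+\infty$ unless $\E[Q]=0$. This gives (D2*). For the converse, (D2*) together with \eqref{eq:conj2} yields $\cF(C)=\sup_Q(-\cF^*(Q))=0$.

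The remaining conditions are then handled as in (i). (D1*) is equivalent to $\cF(0)=0$ and $\cF\ge 0$, exactly as before. For (D3*), take $X$ non-constant and argue with $\cF(\lambda X)$ as in (i). The only differences are that the relevant class is now non-constant $X$, and that for constant $X$ the quantity $\E[XQ]$ vanishes automatically by (D2*).

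\emph{Part (iv).} This follows from (iii) by the substitution $Q\mapsto Q-1$, exactly as (ii) follows from (i), now using Theorem~\ref{Quadrangle Theorem}(a).

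I expect the main obstacles to be two. The first is the constant-invariance step in (iii), which is needed to extract (D2*). The second is the care required with the value $+\infty$ and with attainment in the $\lambda$-argument for (E2*), (V2*), (D3*) and (R3*). In particular, I must verify that $\cF^*(Q)<\infty$ for $Q\in\cQ$, so that the threshold for $\lambda$ is a finite positive number.
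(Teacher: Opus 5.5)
Your treatment of (i), (ii), (iv), and of the directions (D2*)$\Rightarrow$(D1) and (D2)$\Leftrightarrow$(D3*) in (iii), follows the paper's argument. You use the $\lambda$-threshold argument through the Fenchel--Moreau formula (phrasing one direction contrapositively is fine). You use the shift $\cF^*(Q)=\cG^*(Q-1)$ to pass from error to regret and from deviation to risk.

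The weak point is the step you single out as the main obstacle: deriving (D2*) from (D1) via translation invariance $\cF(X+C)=\cF(X)$. As sketched, this is not proved.

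Your bound $\cF(X+tC)\le\tfrac12\cF(2X)$ makes $t\mapsto\cF(X+tC)$ a convex function bounded above, hence constant, only when $\cF(2X)<\infty$. The bound is vacuous for $X$ near the boundary of $\dom\cF$. For example, take $\cF$ to be the indicator of $\{X:\mathrm{ess\,sup}\,X-\mathrm{ess\,inf}\,X\le1\}$, which is a subregular deviation measure, and any $X$ of range exactly $1$. There you must use closedness. For instance, when $\cF(X)<\infty$, $\cF(X+t)\le\liminf_{s\uparrow1}\cF(sX+t)=\liminf_{s\uparrow1}\cF(sX)\le\cF(X)$; here, for $s<1$, the map $u\mapsto\cF(sX+u)$ is bounded by $s\cF(X)$ and hence constant. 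For non-closed convex functionals the claim can actually fail.

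More importantly, the detour is unnecessary. Since $\cF(C)=0$ for constants, the definition of the conjugate gives at once
\[
\cF^*(Q)\;\ge\;\E[CQ]-\cF(C)\;=\;C\,\E[Q]\qquad\text{for all } C\in\R,
\]
so $\cF^*(Q)=+\infty$ whenever $\E[Q]\neq0$, which is (D2*). Your term $\sup_C C\,\E[Q]$ is exactly this, and it only needs ``translation invariance'' at $X=0$, which is (D1) itself.

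The paper argues the same way from the primal side: given (D1*), $\cF(C)=\sup_Q\bigl(C\,\E[Q]-\cF^*(Q)\bigr)\le0$ for all $C$ if and only if $\cF^*(Q)=\infty$ whenever $\E[Q]\neq0$. With this one-line replacement your proof is complete.
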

\begin{proof}
	Let us prove (i). If $\cF$ is a subregular error measure, then, by \eqref{eq:conj}, 
	$$
		\cF^*(0) = \sup_{X \in \cL^p}(0-\cF(X)) = - \inf_{X \in \cL^p}\cF(X)= 0,
	$$
	where the last equality follows from (E1). Further, for any $Q \in \cL^q$,
	$$
		\cF^*(Q) = \sup_{X \in \cL^p}(\cE[XQ]-\cF(X)) \geq \cE[0\cdot Q]-\cF(0) = 0.
	$$
	This proves (E1*). Now, let $X \in \cL^p$ be any non-zero r.v. Then by (E2) there exists $\lambda>0$ such that ${\cal F}(\lambda X)>0$. Then \eqref{eq:conj2} implies that
	$$
		0< \cF(\lambda X) = \sup_{Q \in \cL^q}(\lambda\E[XQ]-\cF^*(Q))
	$$
	Hence, there exists $\bar Q \in \cL^q$ such that 
$\lambda\E[X\bar Q]-\cF^*(\bar Q) > 0$. 
	Because $\cF^*(\bar Q) \geq 0$, this implies that 
  $\cF^*(\bar Q) < \infty$ and $\E[X\bar Q] > 0$, and (E2*) follows.
	
	Conversely, assume that $\cF^*$ satisfies (E1*) and (E2*). Then 
$\cF$ satisfies (E1) by the argument exactly as above. To prove (E2), fix 
any non-zero $X\in \cL^p$. Then (E2*) implies that $\E[X\bar Q]>0$ 
for some $\bar Q \in \cQ$. Then \eqref{eq:conj2} implies that for any 
$\lambda > \frac{\cF^*(\bar Q)}{\E[X\bar Q]}$,
	$$
		\cF(\lambda X) = \sup_{Q \in 
  \cL^q}(\lambda\E[XQ]-\cF^*(Q)) \geq 
           \lambda\E[X\bar Q]-\cF^*(\bar Q) > 0.
	$$
	This proves (E2).
	
	Let us now prove (ii). $\cF$ is a subregular regret measure if and only if $\cE(X)=\cF(X)-\E[X]$ is a subregular error measure. Then its conjugate
	$$
		\cE^*(Q) = \sup_{X \in \cL^p}(\E[XQ]-\cE(X)) = \sup_{X \in \cL^p}(\E[X(Q+1)]-\cF(X)) = \cF^*(Q+1).
	$$
	By (i), $\cE(X)=\cF(X)-\E[X]$ is a subregular error measure if and only if $\cE^*(Q) = \cF^*(Q+1)$ satisfies (E1*) and (E2*). This happens if and only if $\cF^*(Q)$ satisfies (V1*) and (V2*).
	
	We next prove (iii). The equivalence of $\cF(X)\geq \cF(0)=0$ and (D1*) is already established when proving (i). We next prove that $\cF(C) \leq 0$ for constants $C$ if and only if (D2*) holds. Indeed,
	$$
		\cF(C) = \sup_{Q \in \cL^q}(C\E[Q]-\cF^*(Q)) \leq 0 \quad \forall C \in {\mathbb R}
	$$
	if and only if
	$$
		C\E[Q] \leq \cF^*(Q) \quad \forall Q \in \cL^q, \quad \forall C \in {\mathbb R}.
	$$
	The last inequality holds if, for every $Q \in \cL^q$, we have either $\cF^*(Q) = \infty$ or $\E[Q]=0$. But this property is exactly (D2*).
	The equivalence of (D2) for ${\cal F}$ and (D3*) for ${\cal F}^*$ can be proved exactly as the equivalence of (E2) and (E2*) in part (i), with the only difference that we start with non-constant r.v. $X\in {\cal L}^p$ instead of non-zero one. 
Finally, part (iv) follows from part (iii) in exactly the same way as part (ii) follows from (i).
\end{proof}

A functional ${\cal F}:\cL^p\to(-\infty,\infty]$ is called positively homogeneous if
\begin{itemize}
	\item[(F1)] ${\cal F}(\lambda X)=\lambda {\cal F}(X)$ for all $X \in \cL^p$ and every $\lambda\geq 0$.
\end{itemize}
Every proper closed convex functional that is positively homogeneous can be 
represented in the form
\begin{equation}\label{eq:dual}
	{\cal F}(X)=\sup_{Q \in {\cal Q}} {\mathbb E}[Q X]
\end{equation}
for some closed convex set ${\cal Q}\subseteq \cL^q$ called the risk envelope of ${\cal F}$. The risk envelope can be recovered from ${\cal F}$ by the formula
\begin{equation}\label{eq:riskenv}
	\cQ = \{ Q \in {\cal L}^q \mid {\mathbb E}[Q X] \leq {\cal F}(X) \,\, \textrm{for all}\,\, X \in {\cal L}^p \}.
\end{equation}
Then Proposition \ref{prop:conjugate} implies that:
\begin{itemize}
	\item[(E$'$)] ${\cal F}$ is a subregular error measure if and only if ${\cal Q}$ contains $0$, and, for any non-zero $X$, an r.v. $Q$ such that $\E[XQ]>0$;
	\item[(V$'$)] ${\cal F}$ is a subregular regret measure if and only if ${\cal Q}$ contains $1$, and, for any non-zero $X$, an r.v. $Q$ such that $\E[XQ]>\E[X]$;
	\item[(D$'$)] ${\cal F}$ is a subregular deviation measure if and only if ${\cal Q}$ contains $0$, we have ${\mathbb E}[Q]=0$ for every $Q \in {\cal Q}$, and, for any non-constant $X$, ${\cal Q}$ contains an r.v. $Q$ such that $\E[XQ]>0$;
	\item[(R$'$)] ${\cal F}$ is a subregular risk measure if and only if ${\cal Q}$ contains $1$, we have ${\mathbb E}[Q]=1$ for every $Q \in {\cal Q}$, and, for any non-constant $X$, ${\cal Q}$ contains an r.v. $Q$ such that $\E[XQ]>\E[X]$.
\end{itemize}

The conditions above can be equivalently reformulated in a more geometric way as follows:

\begin{itemize}
	\item[(E$'$)] ${\cal F}$ is a subregular error measure if and only if ${\cal Q}$ is a
	closed, convex subset of ${\cal L}^q$ that contains the constant $0$ in its quasi-interior; in other words, $0 \in {\cal Q}$ and every closed hyperplane $H$ containing $0$ has elements of ${\cal Q}$ in both of its associated open half-spaces;
	\item[(V$'$)] ${\cal F}$ is a subregular regret measure if and only if ${\cal Q}$ is a
	closed, convex subset of ${\cal L}^q$ that contains the constant $1$ in its quasi-interior; in other words, $1 \in {\cal Q}$ and every closed hyperplane $H$ containing $1$ has elements of ${\cal Q}$ in both of its associated open half-spaces;
	\item[(D$'$)] ${\cal F}$ is a subregular deviation measure if and only if ${\cal Q}$ is a
	closed, convex subset of the closed hyperplane $H_0=\{Q: {\mathbb E}[Q]=0\}$ in ${\cal L}^q$ that contains the constant $0$ in its quasi-interior relative to $H_0$; in other words, $0 \in {\cal Q}$
	and every closed hyperplane $H \neq H_0$ containing $0$ has elements of ${\cal Q}$ in both of its associated open half-spaces.
	\item[(R$'$)] ${\cal F}$ is a subregular risk measure if and only if ${\cal Q}$ is a
	closed, convex subset of the closed hyperplane $H_1=\{Q: {\mathbb E}[Q]=1\}$ in ${\cal L}^q$ that contains the constant $1$ in its quasi-interior relative to $H_1$; in other words, $1 \in {\cal Q}$
	and every closed hyperplane $H \neq H_1$ containing $1$ has elements of ${\cal Q}$ in both of its associated open half-spaces.
\end{itemize}


\subsection{Parent Functionals and Corresponding Positive Homogeneous Fa\-milies}\label{sec:gen families}
In the previous section, we developed dual representations for subregular functionals (risk, regret, error, and deviation). 
That is, starting from a primal functional on $\cL^p$, we constructed its dual counterpart on $\cL^q$ (via Fenchel--Legendre transformation) and characterized the dual objects axiomatically. 
We considered two regimes: the general subregular case and the subregular \emph{plus} positive homogeneous case, where the dual representation reduces to an envelope (support-function) form~\eqref{eq:dual}.

In the present section, we adopt the reverse perspective. 
We start from \emph{dual} functionals on $\cL^q$ and work in the most general setting of \emph{closed quasi-convex} functionals. 
Given such a functional $\cJ:\cL^q\to[-\infty,\infty]$, we use its sublevel sets
\[
\cQ_\tau=\{Q\in\cL^q:\cJ(Q)\le\tau\},\qquad \tau>0,
\]
as a nested family of dual sets (risk envelopes), and define the associated primal family by their support functions,
\[
\cF_\tau(X)=\sup_{Q\in\cQ_\tau}\E[XQ],\qquad X\in\cL^p.
\]
Our goal is to understand how structural properties of $\cJ$ (e.g., closedness, convexity, normalization, domain restrictions, uniqueness of minimizers) translate into properties of the family $\{\cF_\tau\}_{\tau>0}$ (e.g., monotonicity in $\tau$, concavity in $\tau$, monotonicity in $X$, subregularity in $X$), and conversely how properties of $\cF_\tau$ identify and recover the generator $\cJ$.

This construction is motivated in part by \emph{distributionally robust optimization}: one fixes a discrepancy (divergence) functional and defines an uncertainty set as a ``divergence ball'' of radius $\tau$, then optimizes a worst-case expectation, which is precisely a support function of that ball. 
Accordingly, we introduce axiomatically defined classes of stochastic discrepancy functionals--\emph{stochastic divergences} and \emph{divergence roots} --- and show how they generate families of risk/regret measures that fit naturally into the RQ framework.

In the context of the risk-quadrangle (RQ) framework, special attention is devoted to the notion of a \emph{parent functional}. 
At this point, it is helpful to include a small schematic to orient the reader.
Starting from a dual functional $\cJ$ on $\cL^q$, one constructs the nested envelopes $\cQ_\tau$
and then defines the associated family of primal functionals as the support functions of these sets, $\cF_\tau(X)$.
When $\cJ$ is \emph{convex} (in addition to being closed and proper), this construction admits an equivalent ``parent-functional'' representation: letting $\cF$ denote the convex conjugate of $\cJ$, the family $\{\cF_\tau\}_{\tau>0}$ can be recovered from $\cF$ via the perspective transform
\[
\cF_\tau(X)=\inf_{\lambda>0}\lambda\bigl[\cF(\lambda^{-1}X)+\tau\bigr], \qquad X\in\cL^p.
\]
Thus, convex dual generators $\cJ$ give rise to positive homogeneous families both through envelope (support-function) constructions and, equivalently, through a parent functional $\cF$ via the perspective transform.

Consider an arbitrary non-constant closed quasi-convex  functional 
$\cJ:\cL^q\to[-\infty,\infty]$.  Let
\begin{equation}\label{eq:minAmaxB}
	A=\inf_{Q\in \cL^q} \cJ(Q), \quad \text{and} 
            \quad B=\sup_{Q\in \cL^q} \cJ(Q).
\end{equation}
Because $\cJ$ is non-constant, $A<B$. For any $\tau \in (A,B)$, let 
\begin{equation}\label{eq:qtaudef}
	\cQ_\tau = \{Q \in \cL^q \,|\, \cJ(Q) \leq \tau \}.
\end{equation}
Then $\cQ_\tau$, $A < \tau < B$ is a family of non-empty closed convex 
proper subsets of $\cL^q$, that is nested in the sense that
\begin{itemize}
	\item[(Q1)] $Q_\tau \subseteq Q_t$ whenever $\tau \leq t$, and $\cQ_\tau=\bigcap_{t>\tau}\cQ_t$ for every $\tau>0$. 
\end{itemize}

Let $g$ be any continuous, strictly increasing function mapping the interval 
$[A,B]$ onto $[0,+\infty]$.  Then the functional 
$\cJ':\cL^q\to[0,\infty]$ defined by $\cJ'(X):=g(\cJ(X))$ produces 
via \eqref{eq:qtaudef} exactly the same family of nested closed convex sets, 
just with parameter ranging over $(0,\infty)$. Hence, if we are interested 
in studying the family $\cQ_\tau$, then without loss of generality, 
we may assume that $A=0$ and $B=\infty$, that is, 
$\cJ:\cL^q\to[0,\infty]$ and
\begin{equation}\label{eq:minzero}
	\inf_{Q\in \cL^q} \cJ(Q) = 0 \quad \text{and} \quad 
         \sup_{Q\in \cL^q} \cJ(Q) = +\infty.
\end{equation} 

Because $\cQ_\tau$ are non-empty, convex, closed sets, the formula
\begin{equation}\label{eq:dualtau}
	\cF_\tau(X)=\sup_{Q \in \cQ_\tau} {\mathbb E}[Q X] 
         = \sup_{Q: \cJ(Q) \leq \tau} {\mathbb E}[Q X], \quad \tau > 0,
\end{equation}
defines a one-parameter family of positive homogeneous, convex, closed 
functionals $\cF_\tau:\cL^p\to(-\infty,\infty]$. Because 
$\cQ_\tau$ are proper subsets of $\cL^q$, the functionals 
$\cF_\tau$ are proper, that is, not identically $+\infty$. The property 
(Q1) translates into the fact that 
\begin{itemize}
	\item[(T1)] for every fixed $X \in \cL^p$,  $\cF_\tau(X)$ 
         is a non-decreasing upper-semicontinuous function of $\tau$ on $(0,+\infty)$. 
\end{itemize}
Conversely, for any one-parameter family of positive homogeneous convex 
closed proper functionals ${\cal F}_\tau:\cL^p\to(-\infty,\infty]$, 
$\tau>0$ satisfying (T1), the corresponding risk envelopes $\cQ_\tau$, 
$\tau>0$, can be recovered by
\begin{equation}\label{eq:riskenvtau}
	\cQ_\tau = \{ Q \in \cL^q \,\big|\, {\mathbb E}[Q X] 
    \leq \cF_\tau(X) \, \text{for all}\, X \in \cL^p \},
\end{equation} 
see \eqref{eq:riskenv}, and are non-empty proper closed convex subsets of 
$\cL^q$ satisfying (Q1). This family forms level sets 
\eqref{eq:qtaudef} of the unique non-constant closed quasi-convex  
functional $\cJ:\cL^q\to[0,\infty]$ defined by
\begin{equation}\label{eq:fstardef}
\begin{aligned}
 \cJ(Q) &= \inf\{\tau>0\,|\, Q \in \cQ_\tau\} \, .
\end{aligned}
\end{equation}
Hence, we obtained the following result.

\begin{proposition}\label{prop:gencorr}
	Relations \eqref{eq:dualtau}--\eqref{eq:fstardef} define a
one-to-one correspondence between closed quasi-convex  functionals 
$\cJ:\cL^q\to[-\infty,\infty]$ satisfying \eqref{eq:minzero} and 
one-parameter families $\cF_\tau$, $\tau>0$, of positive homogeneous convex 
closed proper functionals $\cF_\tau:\cL^p\to(-\infty,\infty]$ 
satisfying (T1).
\end{proposition}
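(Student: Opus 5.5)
To prove Proposition~\ref{prop:gencorr}, I will show that the two maps are well defined and mutually inverse. Both directions pass through the intermediate object, namely the nested family of sets $\{\cQ_\tau\}_{\tau>0}$ satisfying (Q1).

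\textbf{Step 1: from $\cJ$ to $\cF_\tau$.} Start from a closed quasi-convex $\cJ$ satisfying \eqref{eq:minzero}. Quasi-convexity and closedness make each sublevel set $\cQ_\tau$ convex and closed. Since $\inf\cJ=0<\tau$, each $\cQ_\tau$ is nonempty; since $\sup\cJ=\infty$, each is proper.

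Nestedness of the sets is immediate. The intersection identity $\cQ_\tau=\bigcap_{t>\tau}\cQ_t$ follows because $\cJ(Q)\le t$ for all $t>\tau$ forces $\cJ(Q)\le\tau$. Hence the support functions $\cF_\tau$ in \eqref{eq:dualtau} are positively homogeneous, convex, closed and proper. Properness holds because the support function of a proper closed convex set is not identically $+\infty$: a separating functional $X$ gives $\cF_\tau(X)<\infty$.

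For (T1), monotonicity in $\tau$ follows from nestedness. Upper semicontinuity in $\tau$ reduces to right-continuity, because $\cF_\tau(X)$ is nondecreasing in $\tau$. So I must show
\[
\lim_{t\downarrow\tau}\cF_t(X)=\cF_\tau(X).
\]
This is the step I expect to be the main obstacle. A limit of support functions of a decreasing family of sets need not equal the support function of the intersection unless one has compactness. I would argue via epigraphs and conjugacy instead. Let $G:=\inf_{t>\tau}\cF_t$. This infimum of convex functions is positively homogeneous, and by monotonicity in $t$ it is convex. Its closure $\cl G$ has as conjugate the indicator of $\bigcap_{t>\tau}\cQ_t=\cQ_\tau$, so $\cl G=\cF_\tau$. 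This gives the right-continuity needed for (T1), at least up to lower-semicontinuous hull issues. If the hull issue resists a clean treatment, I would reinterpret (T1) in the weakest form needed, namely the one that makes the inverse map recover $\cQ_\tau$ through \eqref{eq:riskenvtau} and \eqref{eq:fstardef}.

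\textbf{Step 2: from $\cF_\tau$ to $\cJ$.} Given a family $\{\cF_\tau\}$ satisfying (T1), define $\cQ_\tau$ by \eqref{eq:riskenvtau}. By \eqref{eq:riskenv}, $\cQ_\tau$ is the nonempty closed convex envelope whose support function is $\cF_\tau$. It is a proper subset because $\cF_\tau$ is proper. Monotonicity of $\tau\mapsto\cF_\tau$ gives nestedness of the sets. The upper semicontinuity in (T1) gives the intersection property: if $Q\in\cQ_t$ for all $t>\tau$, then $\E[QX]\le\lim_{t\downarrow\tau}\cF_t(X)=\cF_\tau(X)$ for every $X$.

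Now define $\cJ$ by \eqref{eq:fstardef}. By the intersection property, $\{\cJ\le\tau\}=\cQ_\tau$, which is closed and convex, so $\cJ$ is closed and quasi-convex. The normalization \eqref{eq:minzero} holds because the sets are nonempty for every $\tau>0$ and proper, so $\inf\cJ=0$, and because their union misses some $Q$ or $\cJ$ takes arbitrarily large values.

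\textbf{Step 3: the maps are mutually inverse.} In the direction starting from $\cJ$, the map $\cJ\mapsto\{\cQ_\tau\}\mapsto\cJ$ returns $\cJ$, since a function is recovered from its sublevel sets via $\inf\{\tau:\cJ(Q)\le\tau\}$. In the other direction, $\{\cF_\tau\}\mapsto\{\cQ_\tau\}\mapsto\{\cF_\tau\}$ returns the family by Fenchel--Moreau for support functions, \eqref{eq:dual}--\eqref{eq:riskenv}.
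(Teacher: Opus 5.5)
Your route is the paper's: both pass through the nested envelopes $\cQ_\tau$, and the paper simply asserts that the intersection property $\cQ_\tau=\bigcap_{t>\tau}\cQ_t$ ``translates into'' (T1). The gap is exactly the step you flag, and your conjugacy argument does not close it. It shows $\cF_\tau=\cl G$ with $G=\inf_{t>\tau}\cF_t$. Upper semicontinuity of the nondecreasing map $\tau\mapsto\cF_\tau(X)$, however, is the pointwise identity $G=\cF_\tau$. The lower-semicontinuous hull is not a technicality here: the implication is false when the envelopes are unbounded.

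Here is a counterexample. Take $\Omega=\{\omega_1,\omega_2\}$ with $\P(\omega_1)=\P(\omega_2)=\frac12$, and identify $Q$ with $(Q_1,Q_2)\in\mathbb{R}^2$. Set $\cQ_\tau=\{Q: Q_1=0,\ Q_2\le 0\}$ for $0<\tau\le 1$, and $\cQ_\tau=\{Q: |Q_1|\le 1,\ Q_2\le -|Q_1|/(\tau-1)\}$ for $\tau>1$. These sets are nonempty, proper, closed, convex and nested, and they satisfy $\cQ_\tau=\bigcap_{t>\tau}\cQ_t$ for every $\tau>0$. Hence $\cJ(Q)=\inf\{\tau>0: Q\in\cQ_\tau\}$ is a closed quasi-convex functional satisfying \eqref{eq:minzero}. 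Explicitly, $\cJ(Q)=0$ if $Q_1=0\ge Q_2$; $\cJ(Q)=1+|Q_1|/|Q_2|$ if $0<|Q_1|\le 1$ and $Q_2<0$; and $\cJ(Q)=+\infty$ otherwise. For $X(\omega_1)=2$, $X(\omega_2)=0$ we have $\E[QX]=Q_1$. So $\cF_\tau(X)=0$ for $\tau\le 1$, but $\cF_\tau(X)=1$ for every $\tau>1$, attained at $Q=(1,-1/(\tau-1))$. The family generated by this admissible $\cJ$ therefore violates (T1) at $\tau=1$. The statement as written cannot be proved, either by your argument or by the paper's sketch.

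The repair is the fallback you mention, made precise. Replace the upper semicontinuity in (T1) by the condition that $\tau\mapsto\cF_\tau$ is nondecreasing and $\cF_\tau=\cl\bigl(\inf_{t>\tau}\cF_t\bigr)$. Your Step 1 computation proves exactly this from the intersection property. Your Step 2 still works with it: if $Q\in\cQ_t$ for all $t>\tau$, the continuous linear functional $X\mapsto\E[QX]$ lies below $\inf_{t>\tau}\cF_t$, hence below its closure $\cF_\tau$. Alternatively, keep (T1) but assume some $\cQ_{t_0}$ with $t_0>\tau$ is bounded. Then weak (weak$^*$) compactness and the finite intersection property, applied to the sets $\{Q\in\cQ_t:\E[QX]\ge c\}$, give $\inf_{t>\tau}\cF_t(X)=\cF_\tau(X)$.

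One smaller point: since $\cF_\tau(0)=0$ automatically, ``proper'' has to mean ``finite at some $X\neq0$,'' i.e., $\cF_\tau$ is not the indicator of $\{0\}$. Only under that reading does your inference that $\cQ_\tau\neq\cL^q$, and hence $\sup\cJ=\infty$, go through.
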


We next investigate how various properties of $\cJ$ translate into those of  
$\cF_\tau$ and vice versa.  We first note the obvious equivalence of 
the following statements.
\begin{itemize}
	\item The infimum in \eqref{eq:minzero} is attained and can be replaced by the minimum.
	\item There exists $\bar Q\in \cL^q$ such that 
$\cJ(\bar Q) = 0$.
	\item The intersection of all sets $\cQ_\tau$, $\tau>0$ is non-empty.	
	\item There exists $\bar Q\in \cL^q$ such that 
    $\cF_\tau(X)\geq {\mathbb E}[\bar Q X]$ for all $X \in \cL^p$ 
   and all $\tau>0$. 
\end{itemize}  

We next record the consequences of the uniqueness of the minimizer. The 
following statements are equivalent.
\begin{itemize}
	\item $\argmin \cJ$ is a singleton.
	\item The intersection of all sets $\cQ_\tau$, $\tau>0$ is a singleton.
	\item There exists a unique $\bar Q\in \cL^q$ such that 
$\lim\limits_{\tau\to 0+}\cF_\tau(X) = {\mathbb E}[\bar Q X]$ for all 
$X \in \cL^p$. 
\end{itemize} 

If $\bar Q$ is the unique minimizer in \eqref{eq:minzero}, 
then $\cJ(\bar Q) = 0$, and we can interpret $\cJ$ as a ``measure of 
distance'' from $Q$ to $\bar Q$. Then sets $\cQ_\tau$ can be interpreted 
as sets of r.v.s $Q$ at ``distance'' at most $\tau$ from $\bar Q$.

In particular, substituting $\bar Q=1$, we obtain the following equivalence
$$
\argmin \cJ = \{1\} \quad \Leftrightarrow \quad 
   \lim\limits_{\tau\to 0+}\cF_\tau(X) = {\mathbb E}[X] \quad 
      \text{for all} \quad X \in \cL^p,
$$
while substituting $\bar Q=0$, we obtain the following
$$
\argmin \cJ = \{0\} \quad \Leftrightarrow \quad 
   \lim\limits_{\tau\to 0+} \cF_\tau(X) = 0 \quad \text{for all} 
          \quad X \in \cL^p.
$$ 

What about the opposite limit, as $\tau\to \infty$? It is easy to see that
$$
\lim_{\tau\to\infty} \cF_\tau(X) = \sup_{Q \in \cC} {\mathbb E}[Q X], 
       \quad \text{where} \quad \cC := \cl \dom \cJ.
$$
In particular, if 
\begin{equation}\label{eq:limeqP}
   \cl \dom \cJ = \cP := \{Q \in \cL^q\,|\, Q\geq 0, \ {\mathbb E}[Q] = 1\}
\end{equation}
then
$$
\lim_{\tau\to\infty} \cF_\tau(X) = \mathrm{ess}\sup X.
$$
Also, property $Q\geq 0$ in \eqref{eq:limeqP} implies that each 
$\cF_\tau$ is monotone in sense that $\cF_\tau(X)\geq 0$ whenever 
$X\geq 0$, while property ${\mathbb E}[Q] = 1$ in \eqref{eq:limeqP} 
implies that $\cF_\tau(C)=C$ for constants $C$.

As another example, if
\begin{equation}\label{eq:cldomnonneg}
\cl \dom \cJ = \{Q \in \cL^q\,|\, Q\geq 0\}
\end{equation}
then
\begin{equation}\label{eq:limtauinf}
	\lim_{\tau\to\infty} \cF_\tau(X) = \begin{cases}
		0, & \text{if $\mathrm{ess}\sup X \leq 0$}\\
		+\infty, & \text{if $\mathrm{ess}\sup X > 0$}.
	\end{cases}
\end{equation}

We continue with the following observation.

\begin{proposition}\label{prop:corrconv}
	Let $\cJ:\cL^q\to[-\infty,\infty]$ and 
   $\cF_\tau$, $\tau>0$ be related by \eqref{eq:dualtau}--\eqref{eq:fstardef} 
  as in Proposition \ref{prop:gencorr}. Then $\cJ$ is convex if and only if 
 the family $\cF_\tau$ satisfies (T1) and
	\begin{itemize}
		\item[(T2)] for every fixed $X \in \cL^p$,  
   $\cF_\tau(X)$ is a (non-decreasing and) \emph{concave} function of 
      $\tau$ on $(0,+\infty)$. 
	\end{itemize}
	These properties are also equivalent to the property 
	\begin{itemize}
		\item[(Q2)] $(1-\lambda)Q_{\tau_1} + \lambda Q_{\tau_2} 
    \subseteq Q_\tau$ for $\tau=(1-\lambda)\tau_1 + \lambda \tau_2$ 
      for all $\tau_1>0$, $\tau_2>0$ and $\lambda\in[0,1]$
	\end{itemize}
	for the corresponding family $\cQ_\tau$, $\tau>0$.
\end{proposition}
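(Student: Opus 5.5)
I will prove the cycle: $\cJ$ convex $\Rightarrow$ (Q2) $\Rightarrow$ (T2) $\Rightarrow$ $\cJ$ convex. Property (T1) already holds for every family in the correspondence of Proposition~\ref{prop:gencorr}, so it needs no separate argument.

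\emph{Convexity gives (Q2).} Take $Q_1\in\cQ_{\tau_1}$, $Q_2\in\cQ_{\tau_2}$ and $\lambda\in[0,1]$. Convexity of $\cJ$ yields
$\cJ((1-\lambda)Q_1+\lambda Q_2)\le(1-\lambda)\cJ(Q_1)+\lambda\cJ(Q_2)\le(1-\lambda)\tau_1+\lambda\tau_2=\tau$.
Hence the combination lies in $\cQ_\tau$.

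\emph{(Q2) gives (T2).} Fix $X$ and use that the support function of a Minkowski sum is the sum of the support functions. This gives
\[
\cF_\tau(X)\ \ge\ \sup_{Q\in(1-\lambda)\cQ_{\tau_1}+\lambda\cQ_{\tau_2}}\E[QX]=(1-\lambda)\cF_{\tau_1}(X)+\lambda\cF_{\tau_2}(X).
\]
This is concavity in $\tau$; monotonicity is (T1). The values may equal $+\infty$, and the inequality remains valid in the extended sense. In that case concavity forces $\cF_\tau(X)=+\infty$ for all $\tau$ beyond any point where it is infinite. Together with monotonicity this is still an admissible extended concave function, so I will treat this case by the convention that concave functions may take the value $+\infty$.

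\emph{(T2) gives convexity of $\cJ$.} This is the main step. Take $Q_1,Q_2$ with $\cJ(Q_i)<\infty$, pick $\tau_i>\cJ(Q_i)$, and set $Q=(1-\lambda)Q_1+\lambda Q_2$ and $\tau=(1-\lambda)\tau_1+\lambda\tau_2$. By \eqref{eq:fstardef} it suffices to show $Q\in\cQ_\tau$, and then let $\tau_i\downarrow\cJ(Q_i)$. We may take $\tau_i>0$, using the normalization \eqref{eq:minzero}.

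Membership in $\cQ_\tau$ is tested through the characterization \eqref{eq:riskenvtau}. For every $X\in\cL^p$ we have $\E[Q_iX]\le\cF_{\tau_i}(X)$, so by (T2)
\[
\E[QX]=(1-\lambda)\E[Q_1X]+\lambda\E[Q_2X]\le(1-\lambda)\cF_{\tau_1}(X)+\lambda\cF_{\tau_2}(X)\le\cF_\tau(X).
\]
Therefore $Q\in\cQ_\tau$ and $\cJ(Q)\le\tau$. Letting $\tau_i\downarrow\cJ(Q_i)$ gives the convexity inequality. When some $\cJ(Q_i)=+\infty$ the inequality is trivial.

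The only subtlety I expect is the boundary bookkeeping. One part is the case $\cJ(Q_i)=0$, where we must approach with $\tau_i>0$. The other is the extended-valued concavity convention in (T2), handled as above. The core equivalences are just the support-function identity for Minkowski sums combined with the recovery formula \eqref{eq:riskenvtau}.
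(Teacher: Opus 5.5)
Your proposal is correct and uses the same ingredients as the paper's proof. The envelope recovery formula \eqref{eq:riskenvtau}, together with the support-function identity for Minkowski sums, passes between (T2) and (Q2), and the level-set description of $\cJ$ passes between (Q2) and convexity; you simply arrange these as a cycle, and your limiting step $\tau_i\downarrow\cJ(Q_i)$ supplies a detail the paper glosses over, namely that (Q2) only allows $\tau_i>0$ while $\cJ(Q_i)$ may equal $0$.
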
 
\begin{proof}
	By definition, concavity (T2) means that $\cF_\tau(X)\geq 
(1-\lambda)\cF_{\tau_1} + \lambda \cF_{\tau_2}$ for all $\tau_1>0$, 
$\tau_2>0$ and $\lambda\in[0,1]$, where 
$\tau=(1-\lambda)\tau_1 + \lambda \tau_2$. By \eqref{eq:riskenv}, this 
translates into (Q2). Further, with \eqref{eq:fstardef}, (Q2) is equivalent 
to the statement that if $Q=(1-\lambda)Q_1 + \lambda Q_2$ with 
$\cJ(Q_1)\leq \tau_1$ and $\cJ(Q_2)\leq \tau_2$ then $\cJ(Q)\leq \tau$, 
where $\tau=(1-\lambda)\tau_1 + \lambda \tau_2$. But this is exactly the 
convexity of $\cJ$. 
\end{proof}

We remark that for non-constant convex functionals, the supremum condition 
in \eqref{eq:minzero} is automatically satisfied, hence \eqref{eq:minzero} 
reduces to the infimum condition.

\begin{definition}[Divergence root]
	A closed convex functional $\cK:\cL^q\to[0,\infty]$ 
satisfying
$$\inf_{Q\in \cL^q} \cK(Q) = 0 \quad \text{and} \quad 
         \sup_{Q\in \cL^q} \cK(Q) = +\infty
$$
such that $\argmin \cK = \{1\}$ and 
$\cl \dom \cK = \{Q \in \cL^q\,|\, Q\geq 0\}$ is called a 
\emph{divergence root}.
\end{definition}

The discussion above motivates the following statement.


\begin{proposition}\label{prop:divroot}
	If a functional $\cK:\cL^q\to[0,\infty]$ is a divergence 
root, then the corresponding family of positive homogeneous closed convex 
functionals $\cF_\tau$ in \eqref{eq:dualtau} with $\cJ = \cK$ satisfies (T1), (T2), 
$\lim\limits_{\tau\to 0+}\cF_\tau(X) = {\mathbb E}[X]$, \eqref{eq:limtauinf}, 
and each $\cF_\tau$ is monotone.  In particular, $\cF_\tau$ is a subregular 
regret measure for every $\tau>0$.
\end{proposition}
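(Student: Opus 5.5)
The plan is to assemble the statement from the observations that precede it, and then check the subregular regret axioms through the dual characterization (V$'$) following Proposition~\ref{prop:conjugate}.

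First, a divergence root $\cK$ is closed, convex, and satisfies \eqref{eq:minzero}. Proposition~\ref{prop:gencorr} therefore gives (T1), and Proposition~\ref{prop:corrconv} gives (T2). Next, $\argmin\cK=\{1\}$, so the equivalence recorded after Proposition~\ref{prop:gencorr} (with $\bar Q=1$) yields $\lim_{\tau\to0+}\cF_\tau(X)=\E[X]$. Finally, $\cl\dom\cK=\{Q\ge0\}$ is exactly \eqref{eq:cldomnonneg}, so \eqref{eq:limtauinf} follows from $\lim_{\tau\to\infty}\cF_\tau(X)=\sup_{Q\in\cC}\E[QX]$.

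For monotonicity, every $Q\in\cQ_\tau$ satisfies $Q\ge0$ a.s., because $\cQ_\tau\subseteq\dom\cK\subseteq\cl\dom\cK$. Hence $X\le Y$ implies $\E[QX]\le\E[QY]$ for each such $Q$, and taking suprema gives $\cF_\tau(X)\le\cF_\tau(Y)$.

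For subregular regret I will use (V$'$). The set $\cQ_\tau$ is closed and convex, and $1\in\cQ_\tau$ since $\cK(1)=0\le\tau$. This gives $\cF_\tau(0)=0$ and $\cF_\tau(X)\ge\E[X]$, and $\cF_\tau$ is proper.

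The remaining condition is that for every nonzero $X$ some $Q\in\cQ_\tau$ satisfies $\E[XQ]>\E[X]$. This is the main obstacle. My plan is to perturb $1$ inside the sublevel set. Fix a nonzero $X$ and choose a bounded direction $H\in\cL^\infty$ with $\E[XH]>0$ and $1+H\ge0$. Two choices suffice:
\begin{itemize}
\item if $X$ is nonconstant, take $H=\mathbf 1_A-\P(A)$ with $A=\{X>c\}$ for a suitable $c$, which gives $\E[XH]=\mathrm{Cov}(X,\mathbf 1_A)>0$;
\item if $X$ is a nonzero constant $C$, take $H=\mathrm{sign}(C)\cdot\tfrac12$, so that $1+H$ stays nonnegative.
\end{itemize}
Then $Q_t=1+tH$ lies in $\{Q\ge0\}=\cl\dom\cK$ for all $t\in[0,1]$.

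I then need $\cK(1+tH)\le\tau$ for some $t>0$, but $1+tH$ lies only in the closure of $\dom\cK$, and $\cK$ might equal $+\infty$ along the whole segment. To handle this, pick $Q'\in\dom\cK$ close to $1+H$ with $\E[XQ']>\E[X]$; this is possible because $\E[X(1+H)]>\E[X]$ and $Q\mapsto\E[XQ]$ is continuous (weak-$*$ continuous when $q=\infty$, which is the topology in which the closure is taken). By convexity, $\cK(1+s(Q'-1))\le s\,\cK(Q')<\tau$ for small $s>0$, and the resulting $Q=1+s(Q'-1)$ satisfies $\E[XQ]-\E[X]=s(\E[XQ']-\E[X])>0$.

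Since $1\in\cQ_\tau\subseteq\{Q\ge0\}$, (V$'$) then applies and $\cF_\tau$ is a subregular regret measure. It is also positively homogeneous.
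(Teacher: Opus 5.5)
Your proof that each $\cF_\tau$ is a subregular regret measure is correct, and it takes a different route from the paper's.

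\emph{Paper's route.} The paper works through the $\tau$-family. It reads off $\cF_\tau(X)\ge\E[X]$ from (T1) together with $\lim_{\tau\to0+}\cF_\tau(X)=\E[X]$. It proves (V2) by contradiction: by positive homogeneity, failure of (V2) means $\cF_\tau(X)=\E[X]$ for some nonzero $X$. Monotonicity and concavity in $\tau$ then force $\cF_t(X)=\E[X]$ for all $t>0$, which contradicts \eqref{eq:limtauinf}.

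\emph{Your route.} You fix $\tau$ and verify the envelope characterization (V$'$). The fact $1\in\cQ_\tau$ gives (V1) at once. You then exhibit an explicit $Q\in\cQ_\tau$ with $\E[XQ]>\E[X]$ by moving from the minimizer $1$ a short distance toward a point of $\dom\cK$. This uses only $\cK(1)=0$, convexity, and $\cl\dom\cK=\{Q\ge0\}$; your remark on weak$^*$ continuity for $q=\infty$ is exactly what is needed there.

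\emph{Comparison.} The paper's argument is shorter once (T2) and both $\tau$-limits are available, and it shows the family structure at work. Yours is local in $\tau$ and uses neither concavity in $\tau$ nor any limit, so it stands on its own. The covariance construction can be skipped: $Q=1+\mathbf 1_{\{X>0\}}$ if $\P(X>0)>0$, and $Q=\mathbf 1_{\{X\ge0\}}$ otherwise, are nonnegative and already satisfy $\E[XQ]>\E[X]$.

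\emph{The gap.} The step that does not go through is the clause $\lim_{\tau\to0+}\cF_\tau(X)=\E[X]$. You take it, as the paper does (its proof treats only the last sentence as new), from the equivalence $\argmin\cJ=\{1\}\Leftrightarrow\lim_{\tau\to0+}\cF_\tau(X)=\E[X]$ recorded after Proposition~\ref{prop:gencorr}. The forward implication needs compactness of the sets $\cQ_\tau$, which is not available in infinite-dimensional $\cL^q$, and the clause is false in general.

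\emph{Counterexample.} Let $\cK(Q)=\E|Q-1|$ for $Q\ge0$ and $+\infty$ otherwise; this is the total-variation function of Example~\ref{eg_tvd}. It is a divergence root: closed, convex, $\argmin\cK=\{1\}$, and $\dom\cK=\{Q\ge0\}$. Take $p<\infty$ and $X\in\cL^p$ with $\mathrm{ess}\sup X=+\infty$. Put $B=\{X>M\}$, $\delta=\min\{1,\tau/2\}$ and $Q=1+\delta(\mathbf 1_B/\P(B)-1)$. Then $Q\ge0$ and $\cK(Q)=2\delta(1-\P(B))\le\tau$. Also $\E[QX]\ge(1-\delta)\E[X]+\delta M$. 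Since $M$ is arbitrary, $\cF_\tau(X)=+\infty$ for every $\tau>0$, and the limit is $+\infty\neq\E[X]$.

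So this clause is not established by your argument, nor by the paper's. It does hold under compactness, e.g.\ on a finite probability space, where the sublevel sets of $\cK$ are compact. Your proof of subregularity is unaffected, since it uses $1\in\cQ_\tau$ rather than the limit. The paper's derivation of (V1) goes through the limit, but it is repaired the same way.
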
 
\begin{proof}
	Only the last statement is new and requires proof. The inequality 
$\cF_\tau(X) \geq {\mathbb E}[X]$ is obvious from (T1) and 
$\lim\limits_{\tau\to 0+}\cF_\tau(X) = {\mathbb E}[X]$. Hence, (V1) in 
Definition \ref{def:regret} holds. If (V2) fails, then there exists a 
non-zero $X$ such that $\cF_\tau(X) = {\mathbb E}[X]$. But then (T1) and 
(T2) imply that $\cF_\tau(X) = {\mathbb E}[X]$ for all $\tau>0$, which 
contradicts \eqref{eq:limtauinf}.
\end{proof}

\begin{definition}[Stochastic divergence]
	A closed convex functional $\cJ:\cL^q\to[0,\infty]$ 
satisfying \eqref{eq:minzero} such that $\argmin \cJ = \{1\}$ and 
$\cl \dom \cJ = \cP$ is called a \emph{stochastic divergence}.
\end{definition}

The discussion above suggests the following result.

\begin{proposition}\label{prop:stocdiv}
	If functional $\cJ:\cL^q\to[0,\infty]$ is a stochastic 
divergence, then the corresponding family of positive homogeneous convex 
closed functionals $\cF_\tau$ in \eqref{eq:dualtau} satisfies (T1), (T2), 
$\lim\limits_{\tau\to 0+}\cF_\tau(X) = {\mathbb E}[X]$, 
$\lim\limits_{\tau\to\infty} \cF_\tau(X) = \mathrm{ess} \sup X$, each $\cF_\tau$ is 
monotone and satisfies $\cF_\tau(C)=C$ for constants $C$. In particular, 
$\cF_\tau$ is a subregular (in fact, a coherent) risk measure for 
every $\tau>0$.
\end{proposition}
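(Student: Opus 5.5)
Most properties follow from the general discussion preceding Proposition~\ref{prop:divroot}; what remains is to verify them for $\cJ$ and then check the subregularity axioms (R1)--(R2) of Definition~\ref{def:risk}. I would proceed in three steps.

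\emph{Inherited properties.} A stochastic divergence is closed and convex with \eqref{eq:minzero}. Proposition~\ref{prop:gencorr} together with Proposition~\ref{prop:corrconv} then gives (T1) and (T2). Since $\argmin\cJ=\{1\}$, the singleton equivalence recorded after Proposition~\ref{prop:gencorr} gives $\lim_{\tau\to0+}\cF_\tau(X)=\E[X]$. Since $\cl\dom\cJ=\cP$, the discussion around \eqref{eq:limeqP} gives $\lim_{\tau\to\infty}\cF_\tau(X)=\mathrm{ess}\sup X$.

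\emph{Monotonicity and constancy.} Every $Q\in\cQ_\tau$ lies in $\dom\cJ\subseteq\cP$, so $Q\ge0$ and $\E[Q]=1$. Hence for constants, $\cF_\tau(C)=\sup_{Q\in\cQ_\tau}C\,\E[Q]=C$; this is nonempty because $1\in\cQ_\tau$. For $X\le Y$ a.s., $\E[QX]\le\E[QY]$ for each such $Q$, so taking suprema gives $\cF_\tau(X)\le\cF_\tau(Y)$. Positive homogeneity, convexity and closedness hold because $\cF_\tau$ is a support function. Together these give coherence in the basic sense: (C0)--(C2), closed and convex.

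\emph{Subregularity.} For (R1), $1\in\cQ_\tau$ gives $\cF_\tau(X)\ge\E[X]$, and constancy was shown above. For (R2), fix a nonconstant $X$ and suppose $\cF_\tau(\lambda X)=\E[\lambda X]$ for all $\lambda>0$. By positive homogeneity this is equivalent to $\cF_\tau(X)=\E[X]$. Since $\cF_\tau(X)$ is concave and nondecreasing in $t$ (by (T1)--(T2)), bounded below by $\E[X]$, and tends to $\E[X]$ as $t\to0+$, I would argue as follows. If $\cF_t(X)>\E[X]$ for some $t>\tau$, write $\tau=(1-\mu)s+\mu t$ with $s\to0+$. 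Concavity then gives
\[
\cF_\tau(X)\ge(1-\mu)\cF_s(X)+\mu\cF_t(X)\to(1-\tau/t)\E[X]+(\tau/t)\cF_t(X)>\E[X],
\]
a contradiction. Hence $\cF_t(X)=\E[X]$ for all $t\ge\tau$, and monotonicity in $t$ extends this to $t<\tau$ as well. Letting $t\to\infty$ yields $\mathrm{ess}\sup X=\E[X]$, which forces $X$ to be constant a.s. This contradicts the choice of $X$, so (R2) holds.

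The main obstacle is the concavity step above, specifically passing to the limit $s\to0+$ when $\cF_t(X)$ may be $+\infty$. That case is harmless, since then $\cF_\tau(X)=+\infty>\E[X]$ directly by concavity. The argument mirrors the proof of Proposition~\ref{prop:divroot}, with $\mathrm{ess}\sup$ replacing \eqref{eq:limtauinf}.
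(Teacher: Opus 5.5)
Your proposal is correct and follows essentially the paper's route: the listed properties are taken from the preceding discussion, and (R2) holds because (T1)--(T2) would force $\cF_t(X)=\E[X]$ for all $t>0$, which contradicts $\lim_{t\to\infty}\cF_t(X)=\mathrm{ess}\sup X$ for nonconstant $X$. Deriving $\cF_\tau(X)\ge\E[X]$ from $1\in\cQ_\tau$, rather than from (T1) and the $\tau\to0+$ limit as the paper does, is the sounder choice, because that limit (which both you and the paper simply cite) can fail for $X$ unbounded above when $p<\infty$: for the total-variation divergence $\cJ(Q)=\E|Q-1|$ on $\cP$, one gets $\cF_\tau(X)=+\infty$ for every $\tau>0$.
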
 
\begin{proof}
	Only the last statement is new and requires proof. The inequality 
$\cF_\tau(X) \geq {\mathbb E}[X]$ is obvious from (T1) and 
$\lim\limits_{\tau\to 0+}{\cal F}_\tau(X) = {\mathbb E}[X]$. Hence, (R1) 
in Definition \ref{def:risk} holds. If (R2) fails, then there exists a 
non-constant $X$ such that ${\cal F}_\tau(X) = {\mathbb E}[X]$. But then 
(T1) and (T2) imply that $\cF_\tau(X) = {\mathbb E}[X]$ for all $\tau>0$, 
which implies that ${\mathbb E}[X] = 
\lim\limits_{\tau\to\infty} \cF_\tau(X) = \mathrm{ess} \sup X$. But this is possible 
only if $X$ is a constant.
\end{proof}

\paragraph{Stochastic divergence as a measure of distance} A stochastic divergence $\cJ$ can also be interpreted as a ``distance'' between 
\emph{probability measures}. Recall that all random variables are defined on the probability space $(\Omega,\mathcal{M},\mathbb{P})$, with $\mathbb{P}$ taken as the reference measure. Hence, the notation $\mathbb{E}[Q]$ is understood as shorthand for $\mathbb{E}_{\mathbb{P}}[Q]$. Any r.v. $Q \in \cP$ satisfies $Q\geq 0$ and 
${\mathbb E}[Q] = 1$. Hence, we can define the probability measure $\P_Q$ as 
$\P_Q(A)={\mathbb E}_\P[Q I_A]$ for any event $A \in \cM$, where $I_A$ is the 
indicator function. Then $\bbe_{\P_Q}[X]=\bbe[QX]$ for any r.v. $X$. Hence, for 
any $\cQ\subseteq \cP$, 
$$
\sup_{Q \in \cQ} {\mathbb E}[Q X] = \sup_{\P_Q \in \mathfrak{P}_{\cQ}} {\mathbb E}_{\P_Q}[X],
$$
where $\mathfrak{P}_{\cQ}$ is the set of probability measures corresponding to r.v.s 
$Q \in \cQ$. Moreover, by the Radon--Nikodym theorem, there is a one-to-one correspondence between probability measures $\bbp_Q  \ll \bbp$ and \emph{densities} $Q = \bbp_Q / \bbp$, unique $\bbp$-a.s.

In particular, \eqref{eq:dualtau} reduces to
\begin{equation}\label{eq:dualtau2}
	\cF_\tau(X)=\sup_{\P_Q \in \mathfrak{P}_{\cQ_\tau}} {\mathbb E}_{\P_Q}[X], 
\end{equation}
where $\mathfrak{P}_{\cQ_\tau} = \{\P_Q: \cJ(Q)\leq \tau\} 
= \{\P_Q: \cG(\P_Q)\leq \tau\}$ with $\cG(\P_Q) := \cJ(Q)$. The functional $\cG$ maps probability measures 
absolutely continuous with respect to $\P$ to $[0,\infty]$. It is convex, 
closed with $\cG(\P)=0$, and, for $\P_Q\neq \P$, $\cG(\P_Q)$ can be interpreted 
as the ``distance'' from $\P_Q$ to $\P$. By Proposition \ref{prop:stocdiv}, 
$\cF_\tau$ is a coherent risk measure in the basic sense for every such 
$\cG$ and every $\tau>0$.

A popular example of such ``distance'' is the Wasserstein divergence
\[
W(\P_1, \P_2) = \inf_{\gamma \in \Pi(\P_1, \P_2)} \int_{\Omega \times \Omega} d(x, y) \, d\gamma(x, y)
\]
where:
\begin{itemize}
    \item \( \Pi(\P_1, \P_2) \) is the set of all joint probability measures \( \gamma \) on \( \Omega \times \Omega \) with marginals \( \P_1 \) and \( \P_2 \);
    \item \( d(x, y) \) is a metric on $\Omega$.
\end{itemize}
A family of risk measures \eqref{eq:dualtau2} with ${\cal G}(\P_Q) = W(\P_Q,\P)$ is discussed in \cite{Rockafellar2024}.

We next consider the case when
\begin{equation}\label{eq:varphidivdef}
\cJ(Q) = {\mathbb E}(\varphi(Q)), \quad Q \in \cL^q
\end{equation}
and investigate under what conditions on $\varphi:{\mathbb R}\to(-\infty,\infty]$ functional \eqref{eq:varphidivdef} has the properties listed above. We have the following implications:
\begin{itemize}
	\item If $\varphi$ is closed proper convex, then so is $\cJ$;
	\item If $\varphi$ is non-constant, then so is $\cJ$;
	\item If $\inf_{y \in {\mathbb R}} \varphi(y)=0$ and 
$\sup_{y \in {\mathbb R}} \varphi(y)=\infty$, then  \eqref{eq:minzero} holds;
	\item In particular, if $\varphi$ is convex, non-constant, and 
$\inf_{y \in {\mathbb R}} \varphi(y)=0$,  then \eqref{eq:minzero} holds;
	\item If the infimum $\inf_{y \in {\mathbb R}}\varphi(y)$ is attained and can be replaced by a minimum, then the same is true for the infimum in \eqref{eq:minzero};
	\item If $\argmin \varphi=\{C\}$ is a singleton, then 
 $\argmin \cJ=\{C\}$; 
	\item In particular, $\argmin \varphi=\{0\} \Rightarrow \argmin \cJ
=\{0\}$ and $\argmin \varphi=\{1\} \Rightarrow \argmin \cJ=\{1\}$;
	\item If $\varphi$ is convex, then $\cl \dom \varphi=[a,b]$ is a 
closed convex interval, where $-\infty\leq a<b\leq \infty$. Then 
$\cl \dom \cJ=\{Q \in \cL^q\,|\, a\leq Q\leq b\}$;
	\item In particular, if $\cl \dom \varphi=[0,\infty]$, then 
\eqref{eq:cldomnonneg} holds;
	\item If $\varphi$ is convex, closed, $\min_{y \in {\mathbb R}} \varphi(y)=0$, $\argmin \varphi=\{1\}$, 
	and $\cl \dom \varphi=[0,\infty]$, then $\cJ$ is a divergence root (i.e., $\cJ = \cK$).
\end{itemize}


Assume that $\cJ$ is convex, and let $\cF$ be the conjugate functional to 
$\cJ$ defined in \eqref{eq:conj2}.  Then applying the perspective transform to $\cF$ yields (cf. \cite{Rockafellar2024}) 
\begin{equation}\label{eq:basic}
	\cF_\tau(X)=\inf_{\lambda>0} \lambda[\cF(\lambda^{-1}X)+\tau].
\end{equation}

\begin{proposition}\label{prop:families}
	If $\cF$ is a subregular error, regret, deviation, or risk measure, 
then $\cF_\tau$ is a positive homogeneous subregular error, regret, deviation, or risk measure, respectively.
\end{proposition}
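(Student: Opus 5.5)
We will work on the dual side, using the support-function representation $\cF_\tau(X)=\sup_{Q\in\cQ_\tau}\E[QX]$ with $\cQ_\tau=\{Q:\cJ(Q)\le\tau\}$ and $\cJ=\cF^*$. This representation is the content of \eqref{eq:basic} together with \eqref{eq:dualtau}, which we take as given (cf.\ \cite{Rockafellar2024}), and we stay in the paired setting $p\in[1,\infty)$ of Subsection~\ref{sec:dual}. Once this form is available, $\cF_\tau$ is automatically convex, closed and positively homogeneous, being a supremum of continuous linear functionals over a set that is nonempty whenever $\tau\ge\inf\cJ$. Subregularity then reduces to checking the envelope criteria (E$'$), (V$'$), (D$'$), (R$'$) for $\cQ_\tau$, using the characterization of $\cJ=\cF^*$ given by Proposition~\ref{prop:conjugate}.

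\emph{Error case.} By (E1*), $\cJ\ge 0=\cJ(0)$, so $0\in\cQ_\tau$ for every $\tau>0$. In particular $\cF_\tau(0)=0$ and $\cF_\tau\ge 0$, so $\cF_\tau$ is proper with values in $[0,\infty]$. For the quasi-interior condition, fix a non-zero $X$. By (E2*) there is $\bar Q\in\dom\cJ$ with $\E[X\bar Q]>0$. For $t\in(0,1]$, convexity of $\cJ$ and $\cJ(0)=0$ give
\[
\cJ(t\bar Q)\le t\,\cJ(\bar Q)+(1-t)\,\cJ(0)=t\,\cJ(\bar Q).
\]
Hence $t\bar Q\in\cQ_\tau$ as soon as $t\le\min\{1,\tau/\cJ(\bar Q)\}$ (any $t\le1$ works if $\cJ(\bar Q)=0$). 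Since $\E[X\,t\bar Q]>0$, condition (E$'$) holds and $\cF_\tau$ is a positively homogeneous subregular error measure.

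\emph{Regret case.} We shrink toward $1$ instead of $0$. By (V1*), $\cJ(1)=0$ and $\cJ\ge 0$, so $1\in\cQ_\tau$. Given a non-zero $X$, (V2*) supplies $\bar Q\in\dom\cJ$ with $\E[X\bar Q]>\E[X]$. Set $Q_t=1+t(\bar Q-1)$. Convexity gives $\cJ(Q_t)\le t\,\cJ(\bar Q)\le\tau$ for small $t>0$, while
\[
\E[XQ_t]-\E[X]=t\bigl(\E[X\bar Q]-\E[X]\bigr)>0 .
\]
This is exactly (V$'$).

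\emph{Deviation and risk cases.} The arguments are the same, centred at $0$ and at $1$ respectively, but run for non-constant $X$ using (D3*) and (R3*). The extra constraints are inherited directly: since $\cQ_\tau\subseteq\dom\cJ$, (D2*) gives $\E[Q]=0$ on $\cQ_\tau$, and (R2*) gives $\E[Q]=1$ on $\cQ_\tau$. These yield (D$'$) and (R$'$).

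The only delicate point is the identification of the perspective formula \eqref{eq:basic} with the support function of the sublevel set $\cQ_\tau$, i.e.\ the Lagrangian duality
\[
\sup\{\E[QX]:\cJ(Q)\le\tau\}=\inf_{\lambda>0}\lambda\bigl[\cJ^*(\lambda^{-1}X)+\tau\bigr],\qquad \cJ^*=\cF .
\]
If this is not simply imported from \cite{Rockafellar2024}, we would prove it with a Slater argument. Strong duality for the constraint $\cJ(Q)\le\tau$ requires a strictly feasible point, and here one exists: $\cJ(0)=0<\tau$ in the error and deviation cases, and $\cJ(1)=0<\tau$ in the regret and risk cases. It is exactly the positivity of $\tau$ that makes this work. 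It also explains why the infimum is taken over $\lambda>0$ only: the value $\lambda=0$ would contribute the recession (support) term, and it is not needed because $\tau>0$.
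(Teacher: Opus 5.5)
Your proposal is correct and follows essentially the same route as the paper. Both arguments work with the sublevel sets $\cQ_\tau$ of $\cF^*$ and use convexity of $\cF^*$ to shrink the witness $\bar Q$ from Proposition~\ref{prop:conjugate} toward $0$ (toward $1$ in the regret and risk cases) until it lies in $\cQ_\tau$, then invoke the envelope criteria (E$'$)--(R$'$); your explicit regret case and your Slater-type justification of \eqref{eq:basic}, which the paper simply imports from \cite{Rockafellar2024}, are fine additions but do not change the argument.
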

\begin{proof}
	If $\cF$ is, for example, an error measure, then $\cF^*$ satisfy the 
conditions (E1*) and (E2*) in Proposition \ref{prop:conjugate}. Then (E1*) 
implies \eqref{eq:minzero}, while (E2*) implies that for any non-zero 
r.v. $X$ there exists $\bar Q$ such that $\cF^*(\bar Q)<\infty$ and  
$0<\E[X\bar Q]$.  Let $\lambda=\min\{\frac{\tau}{\cF^*(\bar Q)},1\}$ if 
$\cF^*(\bar Q)>0$ and $\lambda=1$ if $\cF^*(\bar Q)=0$. Then the convexity of 
$\cF^*$ implies that
	$$
	\cF^*(\lambda\bar Q) = \cF^*(\lambda\bar Q + (1-\lambda)\cdot 0) \leq 
         \lambda \cF^*(\bar Q) + (1-\lambda)\cdot 0 \leq \tau. 
	$$ 
	Then for $Q'=\lambda\bar Q$ we have $Q' \in \cQ_\tau$ and 
$0<\E[X Q']$.  Also, $\cQ_\tau$ contains $0$. Hence, it satisfies the 
condition (E') in Section \ref{sec:dual}, which implies that it is a dual 
set of an error measure. The proofs for the regret, deviation, and risk 
measures are similar.
\end{proof}

If \eqref{eq:basic} holds, we will call $\cF$ the \emph{parent} functional for a family $\cF_\tau$, $\tau>0$. In particular, if $\cK:\cL^q\to[0,\infty]$ is a divergence root, then its conjugate $\cF$ is a subregular regret measure, which is the parent of the family $\cF_\tau$ of subregular regret measures corresponding to $\cK$ via Proposition \ref{prop:divroot}.

\begin{proposition}\label{prop:projfam}
	Let $(\cE, \cV,\cD,\cR)$ be a quadrangle generated starting from a
subregular error measure $\cE$ (or from subregular regret measure $\cV$) via 
relations (Q2), (Q3).
	Let $\tau > 0$ and let $\cE_\tau$, $\cV_\tau$, $\cD_\tau$, and $\cR_\tau$ be positive homogeneous subregular error, regret, deviation, and risk measures defined in \eqref{eq:basic}. Then $\cE_\tau$, $\cV_\tau$, $\cD_\tau$ and $\cR_\tau$ are also related by (Q2), (Q3). 
\end{proposition}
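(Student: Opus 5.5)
The plan is to verify the relations directly from the perspective formula \eqref{eq:basic}, by swapping infima. No dual argument should be needed. Throughout, $\cE_\tau(X)=\inf_{\lambda>0}\lambda[\cE(\lambda^{-1}X)+\tau]$, and $\cV_\tau$, $\cD_\tau$, $\cR_\tau$ are defined in the same way from $\cV$, $\cD$, $\cR$. By Proposition~\ref{prop:families}, each of these is a positively homogeneous subregular measure of the corresponding type. Since (Q2) is equivalent to (Q1) once (Q3) holds, I will prove (Q3) and then (Q1).

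\emph{Step 1 (mean-centering).} The quadrangle $(\cE,\cV,\cD,\cR)$ satisfies $\cV(Y)=\cE(Y)+\bbe[Y]$. Hence for every $\lambda>0$,
\[
\lambda[\cV(\lambda^{-1}X)+\tau]=\lambda[\cE(\lambda^{-1}X)+\tau]+\bbe[X].
\]
The term $\bbe[X]$ does not depend on $\lambda$, so taking the infimum over $\lambda>0$ gives $\cV_\tau(X)=\cE_\tau(X)+\bbe[X]$. The same computation with $\cR=\cD+\bbe$ gives $\cR_\tau(X)=\cD_\tau(X)+\bbe[X]$. The convention $\infty+a=\infty$ for finite $a$ covers the case where some of these values are infinite.

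\emph{Step 2 (error projection).} Using (Q1) for the parent quadrangle, $\cD(\lambda^{-1}X)=\inf_{C}\cE(\lambda^{-1}X-C)$. Therefore
\[
\cD_\tau(X)=\inf_{\lambda>0}\inf_{C\in\bbr}\lambda\bigl[\cE(\lambda^{-1}(X-\lambda C))+\tau\bigr].
\]
For fixed $\lambda>0$, the substitution $C'=\lambda C$ is a bijection of $\bbr$. Joint infima may be taken in either order, so
\[
\cD_\tau(X)=\inf_{C'\in\bbr}\inf_{\lambda>0}\lambda\bigl[\cE(\lambda^{-1}(X-C'))+\tau\bigr]=\inf_{C'}\cE_\tau(X-C').
\]
This is the relation in (Q1) with ``inf'' in place of ``min''.

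\emph{Step 3 (attainment, and (Q2)).} The only point that needs care is that (Q1) and (Q2) are stated with minima. Here I invoke the Quadrangle Theorem~\ref{Quadrangle Theorem}, parts (d) and (e), applied to the subregular error $\cE_\tau$. It says the infimum in $\inf_{C'}\cE_\tau(X-C')$ is attained whenever it is finite, on a nonempty closed bounded interval, and trivially attained when it equals $+\infty$. Once (Q1) and (Q3) hold for the $\tau$-family, (Q2) follows as remarked after Definition~\ref{def:subreg quadrangle}. Concretely, $C+\cV_\tau(X-C)=\cE_\tau(X-C)+\bbe[X]$, so
\[
\min_C\{C+\cV_\tau(X-C)\}=\cD_\tau(X)+\bbe[X]=\cR_\tau(X),
\]
and the two argmin sets coincide, which gives the common statistic required in (Q4).

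I expect no real obstacle. The only subtle points are the justification of the infimum interchange, including the cases with $+\infty$ values, and remembering that attainment comes from Theorem~\ref{Quadrangle Theorem} applied to $\cE_\tau$, not to $\cE$.
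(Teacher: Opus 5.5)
Your proof is correct and follows the paper's argument: mean-centering passes through the perspective transform because $\lambda\E[\lambda^{-1}X]=\E[X]$, and (Q1) follows by writing $\cD$ as an infimum of $\cE$, substituting $C=\lambda C_1$, and interchanging the two infima. Your Step 3 goes slightly beyond the paper by making explicit two points it leaves implicit: the infimum is attained because Theorem~\ref{Quadrangle Theorem} applies to the subregular error $\cE_\tau$, and (Q2) follows from (Q1) and (Q3).
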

\begin{proof}
	We first prove \eqref{eq:errregr}. Indeed,
	$$
		\cV_\tau(X) = \inf_{\lambda>0} \lambda[\cV(\lambda^{-1}X)+\tau] = \inf_{\lambda>0} \lambda[\cE(\lambda^{-1}X)+\E[\lambda^{-1}X]+\tau] 
	$$
	$$
		= \E[X] + \inf_{\lambda>0} \lambda[\cE(\lambda^{-1}X)+\tau] = \E[X]+\cE_\tau(X).
	$$
	The proof of \eqref{eq:riskdev} is similar. Next,
    \begin{align*}
        \cD_\tau(X) = \inf_{\lambda>0} [\lambda \cD(\lambda^{-1}X)+\lambda \tau] &= \inf_{\lambda>0} [\lambda \inf_{C_1}\cE(\lambda^{-1}X-C_1)+\lambda \tau]\\
       & = \inf_{\lambda>0} \inf_{C}[\lambda \cE(\lambda^{-1}(X-C))+\lambda \tau],
    \end{align*}
	where $C=\lambda C_1$. We can then interchange the order of infimums and obtain
	$$
		\cD_\tau(X) = \inf_{C} \inf_{\lambda>0} [\lambda \cE(\lambda^{-1}(X-C))+\lambda \tau] = \inf_{C} \cE_\tau(X-C).
	$$
	This proves (Q1), and (Q2) can be proved similarly.
\end{proof}

As a simple example, if we start with an error measure
$$
\cE(X) = \E[X^2], 
$$
then, for any $\tau>0$,
$$
	\cE_\tau(X)=\inf_{\lambda>0} \lambda[\E[(\lambda^{-1}X)^2]+\tau] 
       = \inf_{\lambda>0} [\lambda^{-1}\E[X^2]+\lambda\tau] = 
         2\sqrt{\tau}\sqrt{\E[X^2]} = 2\sqrt{\tau}\|X\|_2.
$$
The projected deviation measure is then
$$
	\cD_\tau(X) = 2\sqrt{\tau}\sigma(X),
$$
see Example 1 in \cite{MR3103448}. Equivalently, we may first note that the 
projected deviation measure for $\cE$ is 
$$
	\cD(X) = \sigma^2(X),
$$
see Example 2 in \cite{MR3103448}, and then by \eqref{eq:basic}
$$
	\cD_\tau(X)=\inf_{\lambda>0} \lambda[\sigma^2(\lambda^{-1}X)+\tau] 
     = 2\sqrt{\tau}\sigma(X).
$$

As another example, \cite{Rockafellar2024} proved that if $\cV$ is the indicator of the set 
$$
	\cX = \{X | \, \exists Y \geq 1, \, \text{such that}\, \E[X+Y]=1, \, X+Y \geq 0\}
$$
then the corresponding risk measure $\cR_\tau$ is the conditional value-at-risk 
with $\tau=\frac{\alpha}{1-\alpha}$. We remark that the set $\cX$ can be 
written in a simpler form. Indeed, with $Z=Y-1$ we obtain
$$
\cX = \{X | \, \exists Z \geq 0, \, \text{such that}\, \E[X+Z]=0, \, X+Z \geq -1\}.
$$
We claim that $X \in \cX$ if and only if $\E[\max(-1,X)]\leq 0$. Indeed, $Z \geq 0$ implies that $X+Z\geq X$. Hence, $X+Z \geq \max(-1,X)$, and $\E[\max(-1,X)] \leq \E[X+Z]=0$. On the other hand, if $\E[\max(-1,X)]\leq 0$, then there exist a constant $c>0$ such that $\E[\max(-1+c,X+c)]=0$, and we may take $Z=\max(-1+c,X+c)-X$. In conclusion,
$$
	\cX = \{X | \, \E[\max(-1,X)]\leq 0\} = \{X | \,  \E[X+1]_+ \leq 1\},
$$ 
and
$$
	\cV(X)=\begin{cases}
		0, & \text{if } \E[X+1]_+ \leq 1\\
		+\infty, & \text{otherwise.}
	\end{cases}
$$
This functional is a subregular regret measure. Indeed, inequality $\cV(X)\geq \E[X]$ is trivial if $\cV(X)=\infty$. If, conversely, $\cV(X)=0$, then $1 \geq \E[X+1]_+ \geq \E[X+1]$ implies that $0 \geq \E[X]$, and (V1) follows. From this argument, it is clear that equality ${\cal V}(X)={\mathbb E}[X]$ holds if and only if $X$ belongs to the set $A=\{X: \E[X]=0, \, \P(X\geq -1)=1\}$. Obviously, for any non-zero $X \in A$ there exists $\lambda > 0$ such that $\lambda X \not \in A$. This implies (V2).
The corresponding risk measure is
$$
\cR(X) = \inf \{C \, | \, \E[X+1-C]_+ \leq 1 \}.
$$
Also, for any $\tau>0$,
$$
	{\cal V}_\tau(X)=\inf_{\lambda>0} \lambda[\cV(\lambda^{-1}X)+\tau] = \tau \inf\{\lambda>0 : \E[\lambda^{-1}X+1]_+ \leq 1\}.
$$
\section{Generalized Regression and Statistical Estimation}\label{sec: Regression}

\subsection{Functional Regression}

Regression is one of the central concepts in statistical estimation theory. Given a random variable $Y \in \cL^p$ (\emph{the regressant} or independent variable) and a collection of random variables $X_i \in \cL^p, \ i = 1,\ldots,n,$ (\emph{the regressors} or independent variables), the task of \emph{functional regression} (see \cite{KendallP1, KendallP2}) is to find a function $f: \bbr^n \to \bbr,$ belonging to a class of measurable functions $\cC,$ that minimizes the \emph{regression residual} $Z_f:= Y - f(\mathbf{X}), \ \mathbf{X} = (X_1,\ldots, X_n),$ with respect to a particular error $\cE$ (e.g., mean squared error, mean absolute error). Specifically, the goal is to solve the following stochastic optimization problem:
\begin{equation}\label{regression problem}
    \min_{f \in \cC} \quad \cE(Z_f).
\end{equation}
In general, different choices of error result in different optimal solutions of \eqref{regression problem}.

From the statistical estimation perspective, given the regressant $Y$ and the vector of regressors $\mathbf{X}$, the aim is to estimate (track) a desired \emph{conditional statistic} $\cS(Y|\mathbf{X})$ (e.g., conditional mean $\bbe[Y|\mathbf{X}]$ or conditional quantile $\var_\alpha(Y|\mathbf{X})$) via regression. The classical approach to this problem is to find an appropriate \emph{loss function} $\ell: \bbr \to \bbr$ and solve \eqref{regression problem}, where $\cE(Z_f) = \bbe[\ell(Z_f)]$ (such errors are referred to as the errors of expectation type according to \cite{MR3103448}). Then (cf. \cite{Bach})
\begin{equation}
    f^*(\mathbf{x}) \in \argmin_{C \in \bbr} \quad \bbe[\ell(Y-C)|\mathbf{X} = \mathbf{x}],
\end{equation}
where the equality $\mathbf{X} = \mathbf{x}$ is understood pointwise.

Of course, the above approach works provided a loss function exists for a given statistic. Statistics for which such a loss function exists are called \emph{elicitable} (see \cite{Lambert2008}). For non-elicitable statistics, however, the expected loss approach is infeasible, and thus other approaches should be considered. The RQ provides a unified framework for both elicitable and non-elicitable statistics by considering axiomatically defined errors beyond expected losses.
\begin{remark}[Maximum likelihood]A prominent special case of the expectation-type error scheme is the \emph{maximum likelihood} approach.
Assume that the regression residual $Z_f:=Y-f(\mathbf X)$ admits a (conditional) density $\rho$ under the data-generating model.
Maximizing the likelihood over $f\in\cC$ is equivalent to minimizing the negative log-likelihood,
\[
\min_{f\in\cC}\ \E\!\left[-\ln \rho(Z_f)\right]
=\min_{f\in\cC}\ \E\!\left[\ell(Z_f)\right],
\qquad \ell(z):=-\ln \rho(z),
\]
which is exactly an error of expectation type.
If the residual density is specified up to parameters (say $\rho_\eta$), one may estimate $(f,\eta)$ jointly by minimizing
$\E[-\ln \rho_\eta(Z_f)]$. 
\end{remark}

\begin{theorem}[Regression Theorem]\label{Regression Theorem}
    Consider problem 
    \begin{equation}\label{5.6}
        \text{minimize} \cE(Z_f) \text{over all} f\in \cC,
    \end{equation} 
    where $Z_f=Y-f(\mathbf{X})$ is the regression residual for random variables $\mathbf{X}$ and $Y$ in the case of $\cE$ being a subregular measure of error and $\cC$ being a class of 
functions $f:\bbr^n \to \bbr$ such that 
\begin{equation}\label{5.8}
     f\in \cC \implies f+C \in \cC \text{for all} C\in \bbr.
\end{equation}

Let $\cD$ and $\cS$ correspond to $\cE$ as in the Quadrangle Theorem.
Problem \eqref{5.6} is equivalent then to:
\begin{equation}\label{5.9}
  \text{minimize} \cD(Z_f) \text{over all} f\in\cC \text{s.t.}
                   0\in \cS(Z_f). 
\end{equation}  

Moreover if $\cE$ is of expectation type and $\cC$ includes a function $f$ 
satisfying 
\begin{equation}\label{5.11}
\begin{aligned}
    f(\mathbf{x})\in \cS(Y|\mathbf{x}) 
       \text{almost surely for $\mathbf{x} \in D$,}\\
   \!\text{where} (Y|\mathbf{x}) = (Y|\mathbf{X}=\mathbf{x})      \text{(conditional distribution), }
\end{aligned}
\end{equation}
with $D$ being the support of the distribution in $\bbr^n$ induced by 
$\mathbf{X}$, then $f$ solves the regression problem \eqref{5.6} and tracks this conditional statistic
in the sense that 
\begin{equation}\label{stat tracking}
     f(\mathbf{X}) \in \cS(Y|\mathbf{X}) \text{almost surely.}
\end{equation}
\end{theorem}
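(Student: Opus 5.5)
The plan is to derive the equivalence of \eqref{5.6} and \eqref{5.9} directly from the Quadrangle Theorem, using the shift-invariance \eqref{5.8} of $\cC$, and then handle the expectation-type claim by conditioning.

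\textbf{Step 1 (splitting the minimization).} By \eqref{5.8}, every $f\in\cC$ can be written as $f=g+C$ with $g\in\cC$ and $C\in\bbr$, and every such pair gives a member of $\cC$. Since $Z_{g+C}=Z_g-C$, we get
$$\inf_{f\in\cC}\cE(Z_f)=\inf_{g\in\cC}\inf_{C\in\bbr}\cE(Z_g-C)=\inf_{g\in\cC}\cD(Z_g),$$
where the last equality is \eqref{eq:devfromerr}. Part (d) of Theorem \ref{Quadrangle Theorem} shows that $\cD$ so obtained is the subregular deviation of the quadrangle. By part (e), whenever $\cD(Z_g)<\infty$ the inner infimum is attained exactly on the nonempty interval $\cS(Z_g)$. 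If $\cD(Z_g)=\infty$, then every $C$ attains it, and we agree that this case is trivial.

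\textbf{Step 2 (the equivalence).} Let $f$ solve \eqref{5.6}. Then $C=0$ must minimize $\cE(Z_f-C)$, so $0\in\cS(Z_f)$ and $\cE(Z_f)=\cD(Z_f)$. For any $g\in\cC$ with $0\in\cS(Z_g)$ we have $\cD(Z_g)=\cE(Z_g)\ge\cE(Z_f)=\cD(Z_f)$, so $f$ solves \eqref{5.9}.

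For the converse, let $f$ solve \eqref{5.9} and take any $g\in\cC$. Pick $C\in\cS(Z_g)$ (nonempty by part (e), or $\cD(Z_g)=\infty$). Then $g+C\in\cC$, and by $\cS(X+C)=\cS(X)+C$ we get $0\in\cS(Z_{g+C})$. Hence
$$\cE(Z_g)\ge\cD(Z_g)=\cD(Z_{g+C})\ge\cD(Z_f)=\cE(Z_f).$$
The optimal values of the two problems therefore agree, and so do their solution sets. The only care needed is the $+\infty$ case, handled by the attainment convention above.

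\textbf{Step 3 (expectation type).} Let $\cE(Z)=\bbe[e(Z)]$. For any measurable $h$, condition on $\mathbf X$:
$$\bbe[e(Y-h(\mathbf X))]=\bbe\big[\bbe[e(Y-h(\mathbf x))\mid \mathbf X=\mathbf x]\big|_{\mathbf x=\mathbf X}\big].$$
Pointwise in $\mathbf x\in D$, the inner quantity is the error of the conditional variable $(Y|\mathbf x)$ shifted by the constant $h(\mathbf x)$. This is at least $\min_C\bbe[e(Y-C)\mid\mathbf X=\mathbf x]$, and the minimum is attained exactly for $C\in\cS(Y|\mathbf x)$. The statistic here is the one of the Expectation Theorem \ref{Expectation Theorem} applied to the conditional distribution, and part (e) of Theorem \ref{Quadrangle Theorem} guarantees attainment whenever the conditional error is finite.

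Hence a function $f$ satisfying \eqref{5.11} attains the pointwise lower bound almost surely. So $\cE(Z_f)\le\cE(Z_h)$ for every $h$, in particular for every $h\in\cC$, and $f$ solves \eqref{5.6}. The tracking property \eqref{stat tracking} is then a restatement of \eqref{5.11}, since $\mathbf X\in D$ almost surely.

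\textbf{Main obstacle.} The delicate point is Step 3's measure-theoretic bookkeeping. It requires a regular conditional distribution of $Y$ given $\mathbf X$, and it requires that the conditional variable lie in $\cL^p$ and have finite conditional error for almost every $\mathbf x$, so that the minimizer set is the nonempty interval $\cS(Y|\mathbf x)$. I would invoke the disintegration theorem, since $Y$ is real-valued, and restrict to the set of $\mathbf x$ where the conditional error is finite. On the complement the inequality holds trivially.
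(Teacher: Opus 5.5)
Your proposal is correct and follows essentially the same route as the proof the paper defers to, namely the one in \cite{MR3103448}. The shift-invariance \eqref{5.8}, together with the error-projection formula and parts (d)--(e) of the subregular Quadrangle Theorem (nonemptiness of $\cS$ when $\cD$ is finite, and $\cS(X+C)=\cS(X)+C$), gives the equivalence of \eqref{5.6} and \eqref{5.9}; in the expectation case, conditioning on $\mathbf{X}$ and comparing pointwise with the minimizers in $\cS(Y|\mathbf{x})$ gives both the optimality of $f$ and the tracking property.
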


The Regression Theorem \ref{Regression Theorem} of \cite{MR3103448} remains valid for the subregular functionals with the same proof.

In general, the inclusion \eqref{stat tracking} holds only for the errors of expectation type. However, more can be said in the case of linear regression.

\subsection{Linear Regression}


Consider the linear regression problem
\begin{equation}\label{eq:linrerg}
	\min\limits_{(c_0,c_1,\dots,c_n)\in {\mathbb R}^{n+1}} {\cal E}\left(Y-c_0-\sum_{i=1}^n c_i X_i\right),
\end{equation}
where ${\cal E}$ is a subregular error measure. Theorem 3.1 in \cite{MR2442649} proves that the solution set in \eqref{eq:linrerg} is non-empty under some additional assumptions on ${\cal E}$ such as positive homogeneity. Here we prove the same result for any subregular error measure with no additional assumptions.

\begin{proposition}\label{prop:linreg}
	Let ${\cal E}$ be a subregular error measure. 
	Then the set of minimizers in \eqref{eq:linrerg} is a non-empty, closed, convex subset of ${\mathbb R}^{n+1}$. 
\end{proposition}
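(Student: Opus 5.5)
The plan is to work with the finite-dimensional convex function
$$
g(c) := {\cal E}\Big(Y-c_0-\sum_{i=1}^n c_iX_i\Big), \qquad c=(c_0,c_1,\dots,c_n)\in{\mathbb R}^{n+1},
$$
and reduce everything to standard facts about closed convex functions on ${\mathbb R}^{n+1}$.

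\textbf{Step 1: closedness and convexity.} The map $c\mapsto Y-c_0-\sum_i c_iX_i$ is affine and continuous from ${\mathbb R}^{n+1}$ into $\cL^p$. Composing it with the closed convex functional ${\cal E}$ shows that $g$ is convex and lower semicontinuous with values in $[0,\infty]$. Hence $\argmin g$ is automatically closed and convex, and only nonemptiness needs proof.

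\textbf{Step 2: two cases.} If $g\equiv+\infty$, every $c$ is a minimizer and we are done. So assume $g$ is proper.

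\textbf{Step 3: recession directions.} For $d\in{\mathbb R}^{n+1}$ put $W_d:=d_0+\sum_i d_iX_i$. I claim that every recession direction of $g$ satisfies $W_d=0$ a.s. Suppose $g(c+td)\le b<\infty$ for all $t\ge0$, and set $Z:=Y-c_0-\sum_i c_iX_i$, so that ${\cal E}(Z-tW_d)\le b$ for all $t\ge 0$. Fix $s>0$, and for $t\ge s$ let $\lambda=s/t\in(0,1]$. Since ${\cal E}$ is convex with ${\cal E}(0)=0$, we have ${\cal E}(\lambda V)\le\lambda{\cal E}(V)$, exactly as in the proof of Proposition~\ref{prop:lemma21}. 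This gives
$$
{\cal E}(\lambda Z - sW_d)\le \lambda b\to 0 \quad (t\to\infty).
$$
Because $\lambda Z - sW_d\to -sW_d$ in $\cL^p$, lower semicontinuity yields ${\cal E}(-sW_d)=0$ for every $s>0$. By (E2) this forces $-W_d=0$, i.e.\ $W_d=0$.

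\textbf{Step 4: lineality, reduction, and attainment.} Conversely, if $W_d=0$ then $g(c+td)=g(c)$ for all $t\in{\mathbb R}$. So the recession cone of $g$ equals its lineality space $L=\{d: W_d=0\}$, a linear subspace. Two routes finish the argument.
\begin{itemize}
\item[(a)] Invoke the standard result (Rockafellar, \emph{Convex Analysis}, Thm.~27.1(b)): a closed proper convex function whose recession directions are all lineality directions attains its infimum.
\item[(b)] Reduce explicitly. Since $g(c+d)=g(c)$ for $d\in L$, $g$ descends to a closed proper convex function $\tilde g$ on $L^\perp$. This $\tilde g$ has no nonzero recession direction, so its nonempty sublevel sets are compact and it attains its minimum at some $\tilde c$. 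Then $\argmin g=\tilde c+L\neq\emptyset$.
\end{itemize}

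\textbf{Main obstacle.} The delicate point is Step 3, in particular in degenerate situations: the regressors may be linearly dependent together with the constant $1$, so the minimizer set can be unbounded. Compared with Theorem 3.1 of \cite{MR2442649}, the novelty is that positive homogeneity is not available. It is replaced by the sub-homogeneity inequality ${\cal E}(\lambda V)\le\lambda{\cal E}(V)$ for $\lambda\in[0,1]$ combined with (E2), in the same way Proposition~\ref{prop:lemma21} handled the one-dimensional case $n=0$.
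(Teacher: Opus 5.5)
Your proof is correct, and it takes a genuinely different route from the paper's.

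\textbf{The paper's route.} The paper homogenizes the problem. It works with $f(d,c_0,\dots,c_n)={\cal E}(dY-c_0-\sum_i c_iX_i)$ on $\mathbb{R}^{n+2}$, after first discarding redundant regressors so that $c_0+\sum_i c_iX_i=0$ forces all $c_i=0$. Because $f(0)=0$, every sublevel set contains the origin. So an unbounded sublevel set yields, via the normalization $x_k/\|x_k\|\to y^*$, a direction with $f(\lambda y^*)=0$ for all $\lambda>0$. This step uses the same sub-homogeneity and lower-semicontinuity estimate you use. A case split on the $Y$-coefficient $d^*$ then finishes: $d^*=0$ contradicts (E2) together with linear independence, and $d^*\neq 0$ forces an exact fit $Y\in\cX$ with zero error.

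\textbf{Your route.} You keep the coefficient of $Y$ fixed at $1$. Your shift $\lambda Z-sW_d$ handles an arbitrary base point and shows that the recession cone of $g$ coincides with the subspace $L=\{d: d_0+\sum_i d_iX_i=0\}$. You then finish with standard finite-dimensional recession theory.

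\textbf{What each buys.} Your version needs no preliminary elimination of regressors, since linear dependence is absorbed into $L$. It also avoids the exact-fit case and gives more structure: the minimizer set is a compact convex set plus $L$. The paper's version is self-contained. It proves by hand the fact you import, namely that a closed convex set in finite dimensions with trivial recession cone is bounded; both of your routes (a) and (b) rest on that fact.

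\textbf{A small slip in (b).} The identity $\argmin g=\tilde c+L$ is not right in general. The correct statement is $\argmin g=\argmin\tilde g+L$, where $\argmin\tilde g$ is a nonempty compact convex subset of $L^\perp$ but not necessarily a single point. Already for $n=0$ you have $L=\{0\}$, and the minimizer set is the interval ${\cal S}(Y)$, which can be nondegenerate. Your conclusion only needs the inclusion $\tilde c+L\subseteq\argmin g$, so nonemptiness is unaffected.
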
 
\begin{proof}
	The convexity and closedness of the set of minimizers follow from the convexity and lower-semicontinuity of the objective function, so we only need to prove its non-emptiness. 
	Let ${\cal X}$ be the set of all r.v.s $X$ representable as $X=c_0+\sum_{i=1}^n c_i X_i$ for some $(c_0,c_1,\dots,c_n)\in {\mathbb R}^{n+1}$. Optimization problem \eqref{eq:linrerg} can be rewritten as
	$$
		\min\limits_{X \in {\cal X}} {\cal E}\left(Y-X\right),
	$$
	and, in this formulation, it is clear that 
	we may assume that the random variables $X_1, \dots, X_n$ satisfy the linear independence condition that $\sum_{i=1}^n c_i X_i$ is not constant unless $c_1=\dots=c_n=0$, because otherwise we can remove some of the $X_i$ without changing set ${\cal X}$.
	
	Function
	$$
		f(d, c_0, c_1, \dots, c_n) = {\cal E}\left(dY-c_0-\sum_{i=1}^n c_i X_i\right)
	$$
	is a convex lower-semicontinuous function on ${\mathbb R}^{n+2}$, satisfying $f(x)\geq 0$ for all $x \in {\mathbb R}^{n+2}$ and $f(0)=0$. We need to minimize $f$ subject to the constraint that $d=1$. If $f(1,c_0, c_1, \dots, c_n)$ is identically $+\infty$ then the statement of the Proposition trivially holds. Otherwise select some $c_0, c_1, \dots, c_n$ such that $f(1,c_0, c_1, \dots, c_n)=C < \infty$. Let $D_C:=\{x \in {\mathbb R}^{n+2} : f(x) \leq C\}$ and $D'_C$ be the set of vectors in $D_C$ with first coordinate $1$. If $D_C$ is a bounded subset of ${\mathbb R}^{n+2}$, then so is $D'_C$. Because $D'_C$ is also non-empty and closed, and $f$ is lower-semicontinuous, this implies that the set of minimizers is non-empty.
	
	It is left to consider the case when the set $D_C$ is unbounded. Then there is a sequence $\{x_k\}_{k=0}^\infty$ such that $\lim\limits_{k\to \infty} \|x_k\|=\infty$ and $f(x_k)\leq C$ for all $k$, where $\|\cdot\|$ is the usual Euclidean norm in ${\mathbb R}^{n+2}$. Then $y_{k} = \frac{x_k}{\|x_k\|}$, $k=1,2,\dots$ are unit vectors belonging to the compact set $\{x \in {\mathbb R}^{n+2}: \|x\|=1\}$, hence, by passing to a subsequence if necessary, we may assume that $\lim\limits_{k\to \infty} y_k=y^*$ for some unit vector $y^* \in {\mathbb R}^{n+2}$. Now, for any $\lambda > 0$, let $K_\lambda$ be an integer such that $\|x_k\|\geq \lambda$ for all $k \geq K_\lambda$. Then the convexity of $f$ implies that
\begin{align*}
f(\lambda y_k) &= f\left(\left(1-\frac{\lambda}{\|x_k\|}\right)0+\frac{\lambda}{\|x_k\|}x_k\right)\\
&\leq \left(1-\frac{\lambda}{\|x_k\|}\right) f(0) + \frac{\lambda}{\|x_k\|} f(x_k)\\ &\leq \frac{\lambda}{\|x_k\|} C,    
\end{align*}
	for all $k \geq K_\lambda$. Hence,
	$$
		0 \leq \lim\limits_{k\to\infty} f(\lambda y_k) \leq \lim\limits_{k\to\infty} \frac{\lambda}{\|x_k\|} C = 0, 
	$$
	from which we conclude that $\lim\limits_{k\to\infty} f(\lambda y_k)=0$. Now lower-semicontinuity of $f$ implies that
	$$
		0 \leq f(\lambda y^*) = f\left(\lim\limits_{k\to \infty} (\lambda y_k)\right) \leq \lim\limits_{k\to \infty} f(\lambda y_k) = 0,
	$$
	hence $f(\lambda y^*)=0$ for all $\lambda > 0$. Let us write $y^*$ in the coordinate form as $y^*=(d^*, c_0^*, \dots, c_n^*)$. If $d^*=0$, then
	$$
		0 = f(\lambda y^*) = {\cal E}\left(\lambda X^*\right) \quad \text{for all} \quad \lambda>0, \quad \text{where} \quad  X^* = -c^*_0-\sum_{i=1}^n c^*_i X_i \in {\cal X},
	$$
	which is a contradiction with (E2) unless $X^*=0$. By the linear independence condition, $X^*=0$ is possible only if $c^*_0=\dots=c^*_n=0$, but this is a contradiction with $\|y^*\|=1$.
	
	Hence, $d^*\neq 0$. 
Let
	$
	Z^*:=d^*Y-c_0^*-\sum_{i=1}^n c_i^*X_i.
	$
	Then
	$$
	0=f(\lambda y^*)={\cal E}(\lambda Z^*) \quad \text{for all} \quad \lambda>0.
	$$
	By axiom (E2), this is possible only if $Z^*=0$. Therefore
	$$
	{\cal E}\left(Y-\frac{c_0^*}{d^*}-\sum_{i=1}^n \frac{c_i^*}{d^*}X_i\right)={\cal E}\left(Z^*/d^*\right) = {\cal E}(0) = 0,
	$$
    which implies that the minimum in \eqref{eq:linrerg} is $0$ and the set of minimizers is non-empty. 
\end{proof}

\begin{remark}
 It is easy to check that Proposition \ref{prop:linreg} does not hold without condition (E2). Indeed, fix any non-constant $X \in {\cal X}$ and consider functional
$$
{\cal E}(Z) = \begin{cases}
	0, & \text{if } Z = 0; \\
	\frac{a^2}{b}, & \text{if } Z = aY + bX \text{ for some constants } a \geq 0 \text{ and } b > 0; \\
	+\infty, & \text{otherwise.}
\end{cases}
$$  
The convexity of ${\cal E}(Z)$ follows from the convexity of function $f(a,b)=\frac{a^2}{b}$ in the region $\{a\geq 0, \, b>0\}$. Also, ${\cal E}$ is closed and satisfies (E1). On the other hand, ${\cal E}(Z)=0$ if and only $Z=bX$ for some $b>0$. If $Y \not \in {\cal X}$, then $Y-c_0-\sum_{i=1}^n c_i X_i$ is never of this form, hence the objective function in \eqref{eq:linrerg} is never $0$. However, for any $b>0$, ${\cal E}(Y+bX) = \frac{1}{b}$, hence, if $b\to \infty$, then the objective function in \eqref{eq:linrerg} can be arbitrarily close to $0$. Hence, the set of minimizers in this example is the empty set.    
\end{remark}

The regression decomposition theorem (Theorem 3.2 in \cite{MR2442649}) remains valid for any subregular error measure, with essentially the same proof.

Now we turn to conditional statistic tracking. For this, we need the following definition. 
\begin{definition}[Representation of risk identifiers, \cite{MR3357644}] A risk identifier $Q^Y$ at $Y \in \cL^p $ for a subregular measure of risk will be called \emph{representable} if there exists a Borel-measurable function $h^Y : \bbr \to \bbr$, possibly depending on $Y$, such that $$Q^Y(\omega) = h^Y(Y(\omega)) \text{for a.e.} \omega \in \Omega.$$
    
\end{definition}
The following is a reformulation of Theorem 5.1 of \cite{MR3357644}.
\begin{theorem}[Statistic tracking in regression]
For given $c_0^*\in \bbr$ and $\mathbf{c}^* \in \bbr^n,$ assume that 
\begin{equation}
    Y(\omega) = c_0^* + \mathbf{c}^{*\top}\mathbf{X} + \varepsilon(\omega) \quad \text{for all} \omega \in \Omega 
\end{equation}
with $\varepsilon \in \cL^p$ independent of $X_i, \ i = 1,\ldots,n$ and $\cS(\varepsilon) = 0.$ Let $(\cR, \cD, \cV, \cE)$ be a subregular quadrangle quartet. If $\cR$ has a representable risk identifier at $\varepsilon$ and $\varepsilon \in \operatorname{int}(\dom \cR),$ then 
\begin{equation}
    c_0^* +  \mathbf{c}^{*\top}\mathbf{X} \in \cS(Y|\mathbf{X}) \ \text{a.s.}
\end{equation}
\end{theorem}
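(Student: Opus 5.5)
The plan is to reduce the conditional inclusion to the unconditional fact $0\in\cS(\varepsilon)$ by a translation argument. The representable risk identifier is used to make the optimality certificate for $0\in\cS(\varepsilon)$ a function of the law of $\varepsilon$ alone, so that it transfers to each conditional law of $Y$ given $\mathbf X=\mathbf x$.

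\emph{Step 1 (reduction).} Fix $\mathbf x\in D$. By the model equation and independence of $\varepsilon$ from $\mathbf X$, the conditional law $(Y|\mathbf x)$ equals the law of $c_0^*+\mathbf c^{*\top}\mathbf x+\varepsilon$. The Quadrangle Theorem~\ref{Quadrangle Theorem}(e) gives $\cS(Z+C)=\cS(Z)+C$. Hence it suffices to show that $0\in\cS(\widetilde\varepsilon)$ for every random variable $\widetilde\varepsilon$ with the same law as $\varepsilon$. Arguing this way, rather than invoking law invariance of $\cS$ (which is not among the axioms), is exactly where the representability hypothesis will enter. The conclusion then holds for every $\mathbf x\in D$, hence $\mathbf X$-almost surely.

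\emph{Step 2 (dual characterization of $0\in\cS(\varepsilon)$).} Since $\varepsilon\in\operatorname{int}(\dom\cR)$, the closed convex $\cR$ is continuous there and $\partial\cR(\varepsilon)\neq\emptyset$. A risk identifier $Q^\varepsilon$ is an element of $\partial\cR(\varepsilon)$, and by (R2*) of Proposition~\ref{prop:conjugate} it satisfies $\E[Q^\varepsilon]=1$. Using (Q2), $\cR(\varepsilon)=\min_C\{C+\cV(\varepsilon-C)\}$ with the minimum attained at $C=0$ because $0\in\cS(\varepsilon)$. Standard subdifferential calculus for such an infimal projection (as in \cite{MR3357644}) then shows
\[
\partial\cR(\varepsilon)=\partial\cV(\varepsilon)\cap\{Q:\E[Q]=1\}.
\]
Conversely, for a general $Z$ with $\cR(Z)=\cV(Z)$, any $Q\in\partial\cV(Z)$ with $\E[Q]=1$ makes $C=0$ a minimizer of $C\mapsto C+\cV(Z-C)$. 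Thus $0\in\cS(Z)$ if and only if such a $Q$ exists; I will verify this equivalence directly from the subgradient inequality.

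\emph{Step 3 (transfer via representability).} Write $Q^\varepsilon=h^\varepsilon(\varepsilon)$ with $h^\varepsilon$ Borel. The subgradient and expectation conditions read
\[
\E\bigl[h^\varepsilon(\varepsilon)\bigr]=1,\qquad \cV(W)\ge \cV(\varepsilon)+\E\bigl[h^\varepsilon(\varepsilon)(W-\varepsilon)\bigr]\ \ \forall W .
\]
For $\widetilde\varepsilon$ equidistributed with $\varepsilon$ I take $\widetilde Q=h^\varepsilon(\widetilde\varepsilon)$. The first condition transfers immediately, since it involves only the law of $\varepsilon$.

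The subgradient inequality is the main obstacle, because it involves $\cV$ at arbitrary $W$, not merely functionals of $\varepsilon$. In the setting of \cite{MR3357644} the quadrangle is law invariant, and I would use that assumption here together with the fact that a representable identifier makes the relevant pairing $\E[h(\varepsilon)\,\cdot\,]$ depend only on joint laws. The inequality for $\widetilde\varepsilon$ then follows by transporting $W$ along a measure-preserving rearrangement; on an atomless space one approximates and invokes closedness of $\cV$.

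Alternatively, the conditional statement can be obtained in one stroke by conditioning. Independence gives
\[
\E\bigl[h^\varepsilon(\varepsilon)\,\big|\,\mathbf X\bigr]=1\quad\text{a.s.},
\]
and applying the subgradient inequality to perturbations $W=\varepsilon+g(\mathbf X)$ yields the first-order condition $0\in\cS(Y-c_0^*-\mathbf c^{*\top}\mathbf x\,|\,\mathbf x)$ for almost every $\mathbf x$. Either route closes Step 1 and proves $c_0^*+\mathbf c^{*\top}\mathbf X\in\cS(Y|\mathbf X)$ almost surely.
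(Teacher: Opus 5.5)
You have a genuine gap in Step 3, and neither of your routes closes it. Transporting $\cV(W)\ge\cV(\varepsilon)+\E[h^\varepsilon(\varepsilon)(W-\varepsilon)]$ to an equidistributed $\widetilde\varepsilon$ requires law invariance of $\cV$, which is not a hypothesis. Once you grant it (as is implicit anyway in $\cS(Y|\mathbf x)$ being a statistic of a conditional distribution), $\cS(\widetilde\varepsilon)=\cS(\varepsilon)\ni 0$ follows at once. Steps 2--3, representability, and $\varepsilon\in\operatorname{int}(\dom\cR)$ then do no work. In Step 1 you avoid law invariance to make room for the identifier, and in Step 3 you reintroduce it, so the argument is circular.

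The conditioning route is a non sequitur. With $Q=h^\varepsilon(\varepsilon)$ and $W=\varepsilon+g(\mathbf X)$, independence gives $\E[(Q-1)g(\mathbf X)]=\E[Q-1]\,\E[g(\mathbf X)]=0$. That yields only the global inequality $\cE(\varepsilon+g(\mathbf X))\ge\cE(\varepsilon)$. For an error not of expectation type this does not decompose over $\mathbf x$, so it says nothing pointwise about $\cS(\cdot\,|\,\mathbf x)$. The paper itself remarks after the Regression Theorem that such conditional tracking generally holds only for expectation-type errors.

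What you are missing is the role the hypotheses play in the proof the paper carries over from \cite{MR3357644}. Representability plus independence make $Q^\varepsilon=h^\varepsilon(\varepsilon)$ independent of $\mathbf X$, so $\E[Q^\varepsilon X_i]=\E[Q^\varepsilon]\,\E[X_i]=\E[X_i]$. This is exactly the optimality certificate that $(c_0^*,\mathbf c^*)$ solves the linear regression $\min_{c_0,\mathbf c}\cE(Y-c_0-\mathbf c^\top\mathbf X)$. Equivalently, $\mathbf c^*$ minimizes $\cD(Y-\mathbf c^\top\mathbf X)$ and $c_0^*\in\cS(Y-\mathbf c^{*\top}\mathbf X)=c_0^*+\cS(\varepsilon)$. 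That is the ``tracking'': the regression output is $c_0^*+\mathbf c^{*\top}\mathbf X$. Identifying it with $\cS(Y|\mathbf X)$ is then only the translation step, since $(Y|\mathbf x)$ has the law of $c_0^*+\mathbf c^{*\top}\mathbf x+\varepsilon$ and $0\in\cS(\varepsilon)$.

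Your own ingredients already deliver this. First, $Q^\varepsilon\in\partial\cV(\varepsilon)$ follows from $\cV\ge\cR$ (take $C=0$ in (Q2)) and $\cV(\varepsilon)=\cR(\varepsilon)$, with no infimal-projection calculus needed. Second, your computation with $W=\varepsilon+(c_0^*-c_0)+(\mathbf c^*-\mathbf c)^\top\mathbf X$ gives $\cE(Y-c_0-\mathbf c^\top\mathbf X)\ge\cE(Y-c_0^*-\mathbf c^{*\top}\mathbf X)$ for all $(c_0,\mathbf c)$. Use that global inequality for this conclusion rather than reading it as a conditional first-order condition.
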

The proof of the theorem remains the same.

\section{Stochastic Optimization: Models and Algorithms}\label{sec: SO and DR}
This section reviews and complements prior work on robust and distributionally robust optimization (DRO) from the RQ perspective. Leveraging the axiomatic theory of stochastic divergences developed in Section \ref{sec:gen families}, we show how DRO is naturally captured within RQ. We also outline recent advances in numerical optimization, focusing on epi-regularization. 

\subsection{Distributionally Robust Optimization}
The notion of risk has become central in modern stochastic optimization and closely related fields such as machine learning. Whenever the uncertainty is modeled probabilistically, the decision-maker aims to select a decision that minimizes the risk of future losses, i.e., solve
\begin{equation}\label{opt:primal risk}
    \min_{\mathbf{w}\in \mathcal{W}} \quad \cR(\ell(\mathbf{w},\omega)),
\end{equation}
where $\cR: \cL^p(\Omega, \cM, \bbp_0) \to (-\infty, \infty]$ is a subregular 
risk measure and $\ell: \mathcal{W} \times \Omega \to \bbr$ is a real-valued 
random loss function assumed to be convex in $w$ over a closed, convex, 
and nonempty set $\mathcal{W} \subseteq \bbr^n.$

Another popular idea related to the mitigation of the uncertainty in decision-making is DRO, i.e, 

\begin{equation}\label{eq:DRO}
   \min_{\mathbf{w}\in \mathcal{W}} \max_{\bbp \in \mathfrak{P}_\tau^\cG} \quad \bbe_{\bbp}[\ell(\mathbf{w},\omega)]
\end{equation}
where $\mathfrak{P}_\tau^\cG = {\left\{\bbp \ll \bbp_0: \cG(\bbp  \parallel \bbp_0) \leq \tau\right\}}$ with $\cG$ being a ``distance'' between $\bbp$ and $\bbp_0$. By definition, this distance is related to the stochastic divergence $\cJ$ as follows
$$\cG(\bbp \parallel \bbp_0):= \cJ(Q) \, , $$
where $Q = d \bbp/ d \bbp_0$ is the Radon--Nikodym density. 

Formulation \eqref{eq:DRO} admits an equivalent representation in terms of a subregular \emph{parent} risk measure $\cR$ associated with stochastic divergence $\cJ$. Indeed, given a stochastic divergence $\cJ$, its conjugate $\cR=\cJ^*$ is given by 

\begin{equation*}
    \cR(\ell(\mathbf{w}, \omega)): = \max_{Q \in \cQ} \quad \left(\bbe_{\bbp_0}[Q\ell(\mathbf{w},\omega)] - \cJ(Q)\right) \, , 
\end{equation*}
with $\cQ := \left\{Q \in \cL^q: Q \geq 0, \  \bbe_{\bbp_0}[Q] = 1\right\}$. 
Because $\cJ$ is a stochastic divergence, its corresponding family $\cQ_\tau = \cQ_\tau^\cJ$ defined in {\eqref{eq:qtaudef}} can be written as
$$\cQ_\tau^\cJ: = \left\{Q \in \cQ: \cJ(Q) \leq \tau  \right\}$$
and notice that there is a one-to-one correspondence between $\cQ_\tau^\cJ$ and $\cP_\tau^\cG$. Therefore, problem \eqref{eq:DRO} is equivalent to minimizing the support functional $\cR_\tau$ of the set $\cQ_\tau^\cJ$, i.e., 
\begin{equation}\label{eq: DRO level set}
 \min_{\mathbf{w} \in \mathcal{W}} \quad \cR_\tau(\ell(\mathbf{w}, \omega)) \, 
\end{equation}
where

$$
\cR_\tau(\ell(\mathbf{w},\omega))=\sup_{Q \in \cQ_\tau^\cJ} {\mathbb E}[Q \ell(\mathbf{w},\omega)] = \inf\limits_{\lambda>0} \lambda[\cR(\lambda^{-1}\ell(\mathbf{w},\omega))+\tau],
$$
see {\eqref{eq:dualtau}} and {\eqref{eq:basic}}.
Moreover, functional $\cR_\tau$ is itself positively homogeneous and monotone subregular risk (see Proposition \ref{prop:families}).

In modern distributionally robust optimization \cite{shapiro2017dro}, the decision-maker first selects $\cJ$ and then solves  \eqref{eq:DRO}. This is equivalent to choosing the parent risk $\cR$ and solving \eqref{eq: DRO level set}. Moreover, the option of choosing a known risk first may be beneficial, as there might be an efficient way to optimize it through regret. Proposition \ref{prop:projfam} implies that \eqref{eq: DRO level set} can be rewritten as 
\begin{equation}\label{opt:level set regret}
   \min_{\mathbf{w}\in \mathcal{W}, \ C \in \bbr} \quad \left(C + \cV_\tau(\ell(\mathbf{w},\omega) - C)\right). 
\end{equation}
Indeed, formulation \eqref{opt:level set regret} is usually more computationally efficient than \eqref{eq:DRO}. However, formulation \eqref{eq:DRO} turned out to be crucial in the context of epi-regularization and numerical algorithms associated with it, which is covered in the next subsection.

\subsection{Epi-Regularization and Applications}

Subregular (or regular, or coherent) risk functionals central to decision-making are, by definition, convex yet frequently nonsmooth, which complicates analysis and large-scale computation. Epi-smoothing via infimal convolution with sufficiently smooth \emph{kernels} \cite{Burke2017} --- also called epi-regularization \cite{kouri2020epi} --- yields better-conditioned surrogates that preserve \emph{core} properties and support faster-converging algorithms. This perspective has been leveraged in risk-averse modeling and algorithms by Kouri and Surowiec \cite{kouri2020epi}, including PDE-constrained applications, providing a principled pathway from theory to numerics.

\begin{definition}[Epi-regularized subregular risk]\label{def:primal epi-reg def}
   Let $\mathcal{R}:\cL^p \to (-\infty, \infty]$ be a subregular measure of risk, and let $\tilde{\cV}:\cL^p \to (-\infty, \infty]$ be a proper, closed, and convex 
   functional.
   For $\epsilon > 0$, the epi-regularized risk measure is defined as
   \begin{equation}\label{eq:epiriskinfimal}
     \mathcal{R}^{\tilde{\cV}}_\epsilon(X) := \inf_{Y \in \cL^p} \left(\mathcal{R}(X-Y) + \frac{1}{\epsilon}\tilde{\cV}(\epsilon Y) \right)= \inf_{Y \in \cL^p}  \left(\mathcal{R}(Y) + \frac{1}{\epsilon}\tilde{\cV}(\epsilon (X-Y))\right)\, .  
   \end{equation}
\end{definition}

Given Definition~\ref{def:primal epi-reg def}, it is natural to ask under what axioms on $\tilde{\cV}$ the functional $\mathcal{R}^{\tilde{\cV}}_\epsilon$ defines a subregular risk. Kouri and Surowiec \cite{kouri2020epi} partially addressed this question for regular risk measures. We extend their results to subregular risk measures within an axiomatic framework.

\begin{proposition}\label{prop:epirisk} Let $\mathcal{R}:\cL^p \to (-\infty, \infty]$ be a subregular measure of risk, and let $\tilde{\cV}:\cL^p \to (-\infty, \infty]$ be a subregular regret such that $\dom \cR^* \subseteq \dom \tilde{\cV}^*$. Then the epi-regularized risk  $\cR^{\tilde{\cV}}_\epsilon$ is subregular. 
\end{proposition}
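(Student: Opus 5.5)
The plan is to work on the dual side and apply Proposition~\ref{prop:conjugate}(iv). Write $\cG(Y):=\frac{1}{\epsilon}\tilde{\cV}(\epsilon Y)$, so that $\cR^{\tilde{\cV}}_\epsilon$ is the infimal convolution $\cR\,\square\,\cG$. First, compute the conjugate
$$\cG^*(Q)=\sup_Y\bigl(\E[YQ]-\tfrac1\epsilon\tilde{\cV}(\epsilon Y)\bigr)=\tfrac1\epsilon\tilde{\cV}^*(Q).$$
The standard rule for conjugates of infimal convolutions then gives
$$(\cR^{\tilde{\cV}}_\epsilon)^*(Q)=\cR^*(Q)+\tfrac{1}{\epsilon}\tilde{\cV}^*(Q).$$
Its domain is $\dom\cR^*\cap\dom\tilde{\cV}^*=\dom\cR^*$, by the hypothesis $\dom \cR^* \subseteq \dom \tilde{\cV}^*$.

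Next, I will check that $\cR^{\tilde{\cV}}_\epsilon$ is proper, closed and convex, and that it equals the biconjugate of the sum above. Convexity of the infimal convolution is routine. Properness follows by taking $Y=0$, which gives $\cR^{\tilde{\cV}}_\epsilon(X)\le\cR(X)$; the lower bound comes from (R1) and (V1): $\cR(X-Y)+\frac1\epsilon\tilde{\cV}(\epsilon Y)\ge\E[X-Y]+\E[Y]=\E[X]$. Thus $\cR^{\tilde{\cV}}_\epsilon(X)\ge\E[X]>-\infty$.

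Closedness is the main obstacle. An infimal convolution of closed convex functions need not be closed, so we cannot simply assume $\cR\,\square\,\cG$ coincides with its closure. I would first try to avoid the issue with the sandwich
$$\E[X]\le \cR^{\tilde{\cV}}_\epsilon(X)\le \cR(X),$$
which pins the values at constants. If a genuine closedness argument is needed, I would attempt it via attainment, in the style of Proposition~\ref{prop:lemma21}. Failing that, I would note that the paper's definition should be read with the closure, i.e.\ work with $\cl(\cR\,\square\,\cG)=(\cR^*+\frac1\epsilon\tilde{\cV}^*)^*$. This closure still satisfies the same sandwich bounds, since $\E[\cdot]$ is continuous and closed minorants are preserved.

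With the conjugate $\Phi:=\cR^*+\frac1\epsilon\tilde{\cV}^*$ in hand, I will verify (R1*)--(R3*):
\begin{itemize}
\item[(R1*)] By (R1*) for $\cR$ and (V1*) for $\tilde{\cV}$, both summands are nonnegative and vanish at $Q=1$. Hence $\Phi\ge0=\Phi(1)$.
\item[(R2*)] Since $\dom\Phi=\dom\cR^*$, every $Q$ in it satisfies $\E[Q]=1$ by (R2*) for $\cR$.
\item[(R3*)] For nonconstant $X$, (R3*) for $\cR$ provides $Q\in\dom\cR^*=\dom\Phi$ with $\E[XQ]>\E[X]$.
\end{itemize}
Proposition~\ref{prop:conjugate}(iv) then yields subregularity. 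The role of the domain-inclusion hypothesis is precisely to keep the witnesses of (R3*) inside the domain of the sum.
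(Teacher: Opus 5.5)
Your route is the paper's. It also passes to the conjugate $\cR^*+\epsilon^{-1}\tilde{\cV}^*$, whose domain is $\dom\cR^*$, and reads (R1) and (R2) off the representation $\cR^{\tilde{\cV}}_\epsilon(X)=\sup_{Q\in\dom\cR^*}\bigl(\E[QX]-\cR^*(Q)-\epsilon^{-1}\tilde{\cV}^*(Q)\bigr)$; that is just Proposition~\ref{prop:conjugate}(iv) unrolled.

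The genuine gap is the one you flagged and left open: you never prove that $\cR^{\tilde{\cV}}_\epsilon$ is lower semicontinuous. This cannot be sidestepped, for two reasons. Closedness is part of the definition of a subregular risk measure. Also, Proposition~\ref{prop:conjugate}(iv) applies only to functionals already known to be closed, since it needs $\cF=\cF^{**}$ to transfer (R1*)--(R3*) back to $\cF$. None of your fallbacks supplies it:
\begin{itemize}
\item The sandwich $\E[X]\le\cR^{\tilde{\cV}}_\epsilon(X)\le\cR(X)$ gives properness and (R1), but no semicontinuity.
\item The argument of Proposition~\ref{prop:lemma21} rests on extracting convergent subsequences from bounded scalars $C_n\in\mathbb{R}$. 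It has no counterpart for minimizing sequences $Y_n\in\cL^p$ unless you add a coercivity/compactness argument, which you do not give.
\item Replacing the infimal convolution by its closure $\cl(\cdot)$ proves a statement about a different functional.
\end{itemize}
The paper fills this step by citing a closedness result for infimal convolutions (Attouch--Buttazzo--Michaille, Thm.~9.4.2) under the hypothesis $\dom\cR^*\subseteq\dom\tilde{\cV}^*$, and then uses Fenchel--Moreau for the biconjugate identity. So in the paper the domain inclusion is used first of all to obtain closedness, not only to keep the (R3*) witnesses inside the domain of the sum, as you suggest.

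Closedness is in fact the only missing piece. (R1) follows from your sandwich, and (R2) needs no biconjugation either. For nonconstant $X$, take $\bar Q\in\dom\cR^*$ with $\E[X\bar Q]>\E[X]$. Since $\bar Q\in\dom\tilde{\cV}^*$, applying Fenchel--Young to each summand gives, for every $Y\in\cL^p$,
\[
\cR(\lambda X-Y)+\epsilon^{-1}\tilde{\cV}(\epsilon Y)\ \ge\ \lambda\E[X\bar Q]-\cR^*(\bar Q)-\epsilon^{-1}\tilde{\cV}^*(\bar Q),
\]
so $\cR^{\tilde{\cV}}_\epsilon(\lambda X)>\E[\lambda X]$ once $\lambda$ is large enough. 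What you still need to supply, by a proof or by the citation the paper uses, is lower semicontinuity of $\cR^{\tilde{\cV}}_\epsilon$ itself.
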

\begin{proof}
    Let us show that $\cR^{\tilde{\cV}}_\epsilon$ is subregular by verifying the axioms of Definition~\ref{def:risk}. First, we observe that since $\cR^{\tilde{\cV}}_\epsilon$ is the infimal convolution of two closed and convex functionals $\cR$ and $\tilde{\cV}$ such that $\dom \cR^* \subseteq \dom \tilde{\cV}^*$, then it is also closed and convex \cite[Theorem 9.4.2]{AttouchButtazzoMichaille2006}. Thus, it is left to show (R1) and (R2) from Definition~\ref{def:risk}.

    \paragraph{(R1)} Assumption $\dom \cR^* \subseteq \dom \tilde{\cV}^*$ and Fenchel--Moreau theorem guarantees that (see \cite[Proposition 9.3.5]{AttouchButtazzoMichaille2006}) $ \cR^{\tilde{\cV}}_\epsilon$
is equal to its biconjugate, which, in turn, satisfies
    \begin{equation}\label{eq:conjepirisk}
       \cR^{\tilde{\cV}}_\epsilon(X) = (\cR^{\tilde{\cV}}_\epsilon)^{**}(X) = \sup_{Q \in \dom \cR^*}  \left(\ \bbe[QX] - \cR^*(Q) - \epsilon^{-1}\tilde{\cV}^*(Q) \right)\, . 
    \end{equation}
    Hence since $\cR$ is a subregular risk and $\tilde{\cV}$ is a subregular regret, then  Proposition~\ref{prop:conjugate} implies that $1 \in \dom \cR^*$, and 
    \begin{itemize}
        \item[(a)] 
        \begin{align*}
          \cR^{\tilde{\cV}}_\epsilon(C) &= \sup\limits_{Q \in \dom \cR^*}  \left(\ \bbe[QC] - \cR^*(Q) - \epsilon^{-1}\tilde{\cV}^*(Q) \right)  \\
          &= C - \inf\limits_{Q \in \dom \cR^*}  \left(\ \cR^*(Q) + \epsilon^{-1}\tilde{\cV}^*(Q) \right)\\
          &=C \, .
        \end{align*}

        \item[(b)]
        \begin{align*}
          \cR^{\tilde{\cV}}_\epsilon(X) &= \sup\limits_{Q \in \dom \cR^*}  \left(\ \bbe[QX] - \cR^*(Q) - \epsilon^{-1}\tilde{\cV}^*(Q) \right)  \\  
          &\geq  \bbe[QX] - \cR^*(Q) - \epsilon^{-1}\tilde{\cV}^*(Q), \quad \forall \; Q \in \dom \cR^*\\
          & = \bbe[X], \text{ for } Q = 1 \, .
        \end{align*}
    \end{itemize}
       
\paragraph{(R2)} Fix 
any non-constant $X\in \cL^p$. Then since $\cR$ is subregular, Proposition~\ref{prop:conjugate} implies that $\E[X\bar Q]>\E[X]$  
for some $\bar Q \in \dom \cR^*$. Further, \eqref{eq:conjepirisk} implies that for any scalar 
$\lambda > \frac{\cR^*(\bar Q) + \epsilon^{-1}\tilde{\cV}^*(\bar Q)}{\E[X\bar Q]-\E[X]}$,
	$$
		\cR^{\tilde{\cV}}_\epsilon(\lambda X) = \sup_{Q \in 
  \dom \cR^*}\left(\lambda\E[XQ]-\cR^*(Q) - \epsilon^{-1}\tilde{\cV}^*( Q)\right) 
  $$
  $$
          \geq \lambda\E[X\bar Q]-\cR^*(\bar Q) - \epsilon^{-1}\tilde{\cV}^*(\bar Q) > \E[\lambda X] \, ,
	$$
    which completes the proof.
\end{proof}
For the differentiability properties of $\cR^{\tilde{\cV}}_\epsilon$ see \cite[Theorem 2]{kouri2020epi}.
Obviously, since $\cR^{\tilde{\cV}}_\epsilon$ is subregular, there exists a quadrangle quartet $(\cR^{\tilde{\cV}}_\epsilon, \cV^{\tilde{\cV}}_\epsilon, \cD^{\tilde{\cV}}_\epsilon, \cE^{\tilde{\cV}}_\epsilon)$ containing it. Although such a quartet is not unique, there is a natural way of constructing it given a quadrangle corresponding to $\cR$. 

\begin{proposition}\label{prop:epiquadrangle}
Let $(\cR, \cV, \cD, \cE)$ be a subregular quadrangle, and let $\tilde{\cV}:\cL^p \to (-\infty, \infty]$ be a subregular regret such that $\dom \cR^* \subseteq \dom \tilde{\cV}^*$. Then $\cV^{\tilde{\cV}}_\epsilon$ given by 
\begin{equation}\label{eq:epiregretinfimal}
     \mathcal{V}^{\tilde{\cV}}_\epsilon(X) := \inf_{Y \in \cL^p} \left(\mathcal{V}(X-Y) + \frac{1}{\epsilon}\tilde{\cV}(\epsilon Y) \right) = \inf_{Y \in \cL^p}  \left(\mathcal{V}(Y) + \frac{1}{\epsilon}\tilde{\cV}(\epsilon (X-Y))\right)\, ,  
   \end{equation}
   generates (projects) a subregular quadrangle $(\cR^{\tilde{\cV}}_\epsilon, \cV^{\tilde{\cV}}_\epsilon, \cD^{\tilde{\cV}}_\epsilon, \cE^{\tilde{\cV}}_\epsilon)$ with $\cR^{\tilde{\cV}}_\epsilon$ given by \eqref{eq:epiriskinfimal}.
\end{proposition}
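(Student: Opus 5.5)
The plan is to show two things: $\cV^{\tilde{\cV}}_\epsilon$ is a subregular regret, and the risk it generates through the regret formula (Q2) is exactly $\cR^{\tilde{\cV}}_\epsilon$ of \eqref{eq:epiriskinfimal}. Once both hold, the Quadrangle Theorem \ref{Quadrangle Theorem}(b),(c),(e) completes the quartet. We set $\cE^{\tilde{\cV}}_\epsilon(X)=\cV^{\tilde{\cV}}_\epsilon(X)-\bbe X$ and $\cD^{\tilde{\cV}}_\epsilon=\cR^{\tilde{\cV}}_\epsilon-\bbe X$. By part (d), $\cD^{\tilde{\cV}}_\epsilon$ is also the error projection of $\cE^{\tilde{\cV}}_\epsilon$, and part (e) supplies the statistic.

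\textbf{Subregularity of $\cV^{\tilde{\cV}}_\epsilon$.} I would mirror the proof of Proposition~\ref{prop:epirisk}. First I need $\dom\cV^*\subseteq\dom\tilde{\cV}^*$. From (Q2) we have $\cR(X)=\inf_C\{C+\cV(X-C)\}$, and conjugating gives $\cR^*(Q)=\cV^*(Q)$ when $\bbe Q=1$ and $\cR^*(Q)=+\infty$ otherwise. Since $\cV^*(Q)\ge \sup_C\,(C\bbe Q - C)$ is infinite unless $\bbe Q=1$, it follows that $\dom\cV^*=\dom\cR^*\subseteq\dom\tilde{\cV}^*$. Then \cite[Theorem 9.4.2, Proposition 9.3.5]{AttouchButtazzoMichaille2006} give that $\cV^{\tilde{\cV}}_\epsilon$ is closed and convex, with dual representation
\[
\cV^{\tilde{\cV}}_\epsilon(X)=\sup_{Q\in\dom\cV^*}\bigl(\bbe[QX]-\cV^*(Q)-\epsilon^{-1}\tilde{\cV}^*(Q)\bigr).
\]
I would then verify (V1*) and (V2*) of Proposition~\ref{prop:conjugate}(ii) for the conjugate $\cV^*+\epsilon^{-1}\tilde{\cV}^*$, which is closed convex as a sum. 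For (V1*), both summands are nonnegative and vanish at $Q=1$. For (V2*), given a nonzero $X$, Proposition~\ref{prop:conjugate}(ii) applied to $\cV$ yields $\bar Q\in\dom\cV^*$ with $\bbe[X\bar Q]>\bbe X$, and $\bar Q$ lies in the domain of the sum by the inclusion above. The duality step is delicate, since strictly one should identify the conjugate of the infimal convolution as this closed sum. I would take that identification as part of the cited theorem, exactly as Proposition~\ref{prop:epirisk} does.

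\textbf{Regret formula.} I would compute directly, exchanging infima:
\[
\inf_C\bigl\{C+\cV^{\tilde{\cV}}_\epsilon(X-C)\bigr\}=\inf_{Y}\Bigl(\inf_C\{C+\cV(X-Y-C)\}+\tfrac1\epsilon\tilde{\cV}(\epsilon Y)\Bigr)=\inf_Y\Bigl(\cR(X-Y)+\tfrac1\epsilon\tilde{\cV}(\epsilon Y)\Bigr).
\]
The middle equality uses the first form of \eqref{eq:epiregretinfimal} with $Y$ not shifted by $C$, and the last uses (Q2) for the original quadrangle. The result is $\cR^{\tilde{\cV}}_\epsilon(X)$. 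Because the infimum over $C$ is attained by Theorem~\ref{Quadrangle Theorem}(e), the right side of (Q2) may be written as a minimum. Proposition~\ref{prop:epirisk} independently confirms that this risk is subregular.

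The main obstacle is the first step, specifically the closedness of the infimal convolution and the duality identity. This is handled only by the domain inclusion transferred from $\cR^*$ to $\cV^*$, so the conjugacy relation $\cR^*=\cV^*$ on $\{\bbe Q=1\}$ is the key check. The rest is bookkeeping through Theorem~\ref{Quadrangle Theorem}.
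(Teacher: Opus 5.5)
Your regret-formula computation (swap $\inf_C$ and $\inf_Y$, then apply (Q2) to the original quadrangle) is exactly the paper's proof. The paper stops there and defines $\cD^{\tilde{\cV}}_\epsilon$ and $\cE^{\tilde{\cV}}_\epsilon$ through (Q3).

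Your extra step, proving subregularity of $\cV^{\tilde{\cV}}_\epsilon$, which you yourself single out as the key check, rests on a false identity. The bound $\cV^*(Q)\ge\sup_C\,(C\,\mathbb{E}[Q]-C)$ would require $\cV(C)\le C$, but the regret axioms only give $\cV(C)\ge C$. For instance, the quantile quadrangle has $\cV(C)=C_+/(1-\alpha)$, and $\cV^*$ is the indicator of $\{0\le Q\le 1/(1-\alpha)\}$, so $0\in\dom\cV^*$. In fact $\dom\cV^*=\dom\cR^*$ can never hold. Your conjugation correctly gives $\dom\cR^*=\dom\cV^*\cap\{\mathbb{E}[Q]=1\}$. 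But (V2*) at $X=\pm 1$ forces $\dom\cV^*$ to contain points with $\mathbb{E}[Q]>1$ and with $\mathbb{E}[Q]<1$. So the hypothesis $\dom\cR^*\subseteq\dom\tilde{\cV}^*$ does not give $\dom\cV^*\subseteq\dom\tilde{\cV}^*$. Both places where you use that inclusion are therefore unsupported: the closedness of the infimal convolution defining $\cV^{\tilde{\cV}}_\epsilon$ via the cited theorem, and the membership of $\bar Q$ in $\dom(\cV^*+\epsilon^{-1}\tilde{\cV}^*)$.

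The non-constant case of (V2*) can be repaired. (R3*) for the subregular risk $\cR$ supplies $\bar Q\in\dom\cR^*$ with $\mathbb{E}[X\bar Q]>\mathbb{E}[X]$. Then $\cV^*(\bar Q)=\cR^*(\bar Q)<\infty$ and $\bar Q\in\dom\tilde{\cV}^*$.

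For a nonzero constant $X=c$, however, you need some $Q\in\dom\cV^*\cap\dom\tilde{\cV}^*$ with $\mathbb{E}[Q]>1$ if $c>0$, or $\mathbb{E}[Q]<1$ if $c<0$. A hypothesis that only concerns densities with $\mathbb{E}[Q]=1$ cannot deliver this. Closing the gap needs a further argument or assumption. One option is to require $\dom\cV^*\subseteq\dom\tilde{\cV}^*$ directly. Another is to work in a finite-dimensional $\cL^q$, where (V2*) makes $1$ an interior point of both $\dom\cV^*$ and $\dom\tilde{\cV}^*$; this also secures closedness and exactness of the infimal convolution.

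The paper's own proof does not verify subregularity of $\cV^{\tilde{\cV}}_\epsilon$ at all; only $\cR^{\tilde{\cV}}_\epsilon$ is covered, by Proposition~\ref{prop:epirisk}.
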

\begin{proof}
By Definition \ref{def:primal epi-reg def} and axiom (Q2) in Definition \ref{def:subreg quadrangle},    
\begin{align*}
     \mathcal{R}^{\tilde{\cV}}_\epsilon(X) &= \inf_{Y \in \cL^p} \left( \mathcal{R}(X-Y) + \frac{1}{\epsilon}\tilde{\cV}(\epsilon Y) \right)\\
     &= \inf_{Y \in \cL^p} \min_{C}  \left( C + \mathcal{V}(X-Y -C) + \frac{1}{\epsilon}\tilde{\cV}(\epsilon Y) \right)\\
     &= \min_{C}  \left( C +  \inf_{Y \in \cL^p} \mathcal{V}(X-C-Y) + \frac{1}{\epsilon}\tilde{\cV}(\epsilon Y) \right)\\
     &= \min_{C} \left( C + \mathcal{V}^{\tilde{\cV}}_\epsilon(X-C) \, \right) . 
\end{align*} 
Setting $\cD^{\tilde{\cV}}_\epsilon(X) = \cR^{\tilde{\cV}}_\epsilon(X) -\bbe[X]$ and $\cE^{\tilde{\cV}}_\epsilon(X) = \cV^{\tilde{\cV}}_\epsilon(X) - \bbe[X]$ completes the proof.
\end{proof}
\begin{remark}[Epi-regularized quadrangle]
Given a quadrangle $(\cR, \cV, \cD, \cE)$, with nonsmooth $\cR$, Proposition \ref{prop:epiquadrangle} implies that epi-regularizing the regret component $\cV$ suffices to epi-regularize the entire quadrangle.     
\end{remark}

Now, let us consider coherent quadrangles $(\cR, \cV, \cD, \cE)$. It is convenient to epi-regularize such quadrangles by utilizing the dual representation of their regret components.

\begin{proposition}\label{prop:cohreg}
    Let $\mathcal{V}:\cL^p \to (-\infty, \infty]$ be a coherent measure of regret, and let $\tilde{\cK}:\cL^q \to [0, \infty]$ be a divergence root such that $\dom \cV^* \subseteq \cl \dom \tilde{\cK}$. For any $\epsilon > 0$, functional 
    \begin{equation}\label{eq:epiregretdual}
       \cV^{\tilde{\cK}}_\epsilon(X) = \sup_{Q \in \dom \cV^*}  \left( \bbe[QX] - \epsilon^{-1}\tilde{\cK}(Q) \, \right).
   \end{equation}  
   is a coherent regret measure in the general sense (cf. Definition \ref{def:coherent_regret}).
\end{proposition}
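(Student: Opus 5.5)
We must show that $\cV^{\tilde{\cK}}_\epsilon$ in \eqref{eq:epiregretdual} is closed and convex, satisfies (K0), and satisfies (K2). Write $\cQ:=\dom\cV^*$. Since $\cV$ is coherent in the basic sense, and in particular positively homogeneous, $\cV^*$ is the indicator of a closed convex envelope $\cQ$ and $\cV(X)=\sup_{Q\in\cQ}\E[QX]$ by \eqref{eq:dual}. Two elementary facts about $\cQ$ will be used. First, $\cV(0)=0$ gives $\cV^*(Q)\geq 0$ on $\cL^q$. Second, monotonicity (K2) forces every $Q\in\cQ$ to be nonnegative: if $\P(Q<0)>0$, take $X=-tI_{\{Q<0\}}\leq 0$; then $\cV(X)\geq \E[QX]\to\infty$ as $t\to\infty$, contradicting $\cV(X)\leq\cV(0)=0$.

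The easy properties follow directly from the formula. The functional $\cV^{\tilde{\cK}}_\epsilon$ is a pointwise supremum over $Q\in\cQ$ of the affine continuous functionals $X\mapsto \E[QX]-\epsilon^{-1}\tilde{\cK}(Q)$. Hence it is convex and closed; only $Q$ with $\tilde{\cK}(Q)<\infty$ contribute, which is harmless.

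For monotonicity, let $X\leq Y$ a.s. Every $Q\in\cQ$ satisfies $Q\geq 0$, so $\E[QX]\leq\E[QY]$ termwise, and taking suprema gives $\cV^{\tilde{\cK}}_\epsilon(X)\leq\cV^{\tilde{\cK}}_\epsilon(Y)$. The hypothesis $\dom\cV^*\subseteq\cl\dom\tilde{\cK}=\{Q\geq 0\}$ gives $Q\geq0$ as well, without appealing to (K2).

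The main step is (K0), namely
\[
\cV^{\tilde{\cK}}_\epsilon(0)=-\epsilon^{-1}\inf_{Q\in\cQ}\tilde{\cK}(Q)=0 .
\]
Since $\tilde{\cK}\geq 0$, we have $\cV^{\tilde{\cK}}_\epsilon(0)\leq 0$. For the reverse inequality we need some $Q\in\cQ$ with $\tilde{\cK}(Q)=0$, and because $\argmin\tilde{\cK}=\{1\}$ this requires $1\in\cQ$. I would obtain this from the regret axioms. Under coherence alone this inclusion is not automatic: coherence gives $\cV^*(Q)\geq 0$ but not $\cV^*(1)=0$. If $\cV$ satisfies (V1), i.e.\ $\cV(X)\geq\E[X]$, then $\E[1\cdot X]\leq\cV(X)$ for all $X$, so \eqref{eq:riskenv} gives $1\in\cQ$. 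This is the standing context, since regret measures in the paper are subregular, and I would state it explicitly.

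With $1\in\cQ$ we get $\cV^{\tilde{\cK}}_\epsilon(0)\geq \E[1\cdot 0]-\epsilon^{-1}\tilde{\cK}(1)=0$, so (K0) holds. The same choice $Q=1$ also yields the lower bound $\cV^{\tilde{\cK}}_\epsilon(X)\geq\E[X]$, which gives (V1) for the regularized functional as a by-product. Positive homogeneity is lost because of the $\tilde{\cK}$ term, which is why the conclusion is coherence only in the general sense, i.e.\ (K0), (K2), closedness and convexity.
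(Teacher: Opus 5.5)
Your proof is correct and follows the paper's own argument: convexity and closedness come from the supremum of affine continuous functionals, monotonicity from $\dom\cV^*\subseteq\cl\dom\tilde{\cK}=\{Q\geq 0\}$, and (K0) from $\tilde{\cK}\geq 0$ together with $1\in\dom\cV^*$ and $\tilde{\cK}(1)=0$. You are also right that $1\in\dom\cV^*$ does not follow from coherence alone. The paper uses it without justification; for example, $\cV(X)=\tfrac12\E[X]$ is coherent with $\dom\cV^*=\{\tfrac12\}$, and it gives $\cV^{\tilde{\cK}}_\epsilon(0)=-\epsilon^{-1}\tilde{\cK}(\tfrac12)<0$. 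So stating (V1) as an explicit hypothesis, as you do, is exactly what the proof needs.
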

\begin{proof}
First, note that convexity and lower semicontinuity of $\cV^{\tilde{\cK}}_\epsilon$ follow directly from the definition \eqref{eq:epiregretdual}.

Next, we verify that $\cV^{\tilde{\cK}}_\epsilon(0)=0$. Indeed,
\begin{align*}
  \cV^{\tilde{\cK}}_\epsilon(0)
  &= \sup_{Q \in \dom \cV^*} \big( \bbe[Q \cdot 0] - \epsilon^{-1}\tilde{\cK}(Q) \big)\\
  &= \sup_{Q \in \dom \cV^*} \big( - \epsilon^{-1}\tilde{\cK}(Q) \big)\\
  &= - \inf_{Q \in \dom \cV^*} \big( \epsilon^{-1}\tilde{\cK}(Q) \big)
  = 0,
\end{align*}
where the last equality uses $1 \in \dom \cV^*$ and a divergence root property  $\tilde{\cK}(1)=0$.
Monotonicity of $\cV^{\tilde{\cK}}_\epsilon$ follows from its definition together with the fact that 
$$\dom \cV^* \subseteq \cl \dom \tilde{\cK} = \{Q \in \cL^q : Q \ge 0\} \, .
$$
\end{proof}
\begin{definition}[Epi-regularized coherent regret]\label{def:dual epi-reg def}
Under the assumptions of Proposition~\ref{prop:cohreg}, the functional~\eqref{eq:epiregretdual} is referred to as the \emph{epi-regularized regret measure}. 
\end{definition}

Epi-regularized regret $\cV^{\tilde{\cK}}_\epsilon$ naturally induces a quadrangle $(\cR^{\tilde{\cK}}_\epsilon, \cV^{\tilde{\cK}}_\epsilon, \cD^{\tilde{\cK}}_\epsilon, \cE^{\tilde{\cK}}_\epsilon)$, where $\cR^{\tilde{\cK}}_\epsilon(X) = \min\limits_C\left(C+ \cV^{\tilde{\cK}}_\epsilon(X-C)\right)$ is a coherent risk measure in the general sense with error and deviation defined by subtracting the expectation from regret and risk, respectively. 
\newpage
\bibliographystyle{plain}
\bibliography{references.bib}
\end{document}